\documentclass[a4paper,11pt,leqno]{article}

\usepackage[latin1]{inputenc}
\usepackage[T1]{fontenc} 
\usepackage[english]{babel}
\usepackage{verbatim}
\usepackage{calligra}
\usepackage{enumerate}
\usepackage{dsfont}
\usepackage{amsfonts}
\usepackage{amsmath}
\usepackage{amsthm}
\usepackage{amssymb}
\usepackage{fancyhdr}
\usepackage{mathtools}
\usepackage{mathrsfs}
\usepackage{color}
\usepackage{graphicx}
\usepackage{bm}

\theoremstyle{definition}
\newtheorem{defin}{Definition}[section]

\newtheorem{rem}[defin]{Remark}

\theoremstyle{plane}
\newtheorem{thm}[defin]{Theorem}
\newtheorem{prop}[defin]{Proposition}
\newtheorem{coroll}[defin]{Corollary}
\newtheorem{lemma}[defin]{Lemma}

\newcommand{\mbb}{\mathbb}

\newcommand{\mc}{\mathcal}
\newcommand{\mf}{\mathfrak}
\newcommand{\mds}{\mathds}
\newcommand{\veps}{\varepsilon}
\newcommand{\what}{\widehat}
\newcommand{\wtilde}{\widetilde}
\newcommand{\vphi}{\varphi}
\newcommand{\oline}{\overline}

\newcommand{\ra}{\rightarrow}

\newcommand{\hra}{\hookrightarrow}

\newcommand{\g}{\gamma}
\renewcommand{\k}{\kappa}
\newcommand{\s}{\sigma}
\renewcommand{\t}{\tau}
\newcommand{\z}{\zeta}
\newcommand{\lam}{\lambda}
\newcommand{\de}{\delta}
\newcommand{\lan}{\langle}
\newcommand{\ran}{\rangle}

\newcommand{\R}{\mathbb{R}}
\newcommand{\Q}{\mathbb{Q}}

\newcommand{\N}{\mathbb{N}}
\newcommand{\Z}{\mathbb{Z}}
\newcommand{\T}{\mathbb{T}}
\renewcommand{\P}{\mathbb{P}}

\renewcommand{\div}{{\rm div}\,}
\newcommand{\divh}{{\rm div}_h}
\newcommand{\curlh}{{\rm curl}_h}

\newcommand{\curl}{{\rm curl}\,}

\newcommand{\Id}{{\rm Id}\,}

\newcommand{\trho}{\widetilde{\rho}}
\newcommand{\ess}{{\rm ess}}
\newcommand{\res}{{\rm res}}

\allowdisplaybreaks

\def\d{\partial}
\def\div{{\rm div}\,}

\textheight = 700pt
\textwidth = 460pt
\voffset = -70pt
\hoffset = -40pt

\title{\textsc{\Large{\textbf{Incompressible and fast rotation limit for barotropic Navier-Stokes equations at large Mach numbers}}}}

\author{\normalsize\textsl{Francesco Fanelli 
} \vspace{.5cm} \\
\footnotesize{\textsc{Universit\'e de Lyon, Universit\'e Claude Bernard Lyon 1}} \\
{\footnotesize \it Institut Camille Jordan -- UMR 5208} \vspace{.1cm} \\
\footnotesize{\ttfamily{fanelli@math.univ-lyon1.fr}}
}

\vspace{.2cm}
\date\today

\begin{document}

\maketitle

\subsubsection*{Abstract}
{\footnotesize In the present paper we study the incompressible and fast rotation limit for the barotropic Navier-Stokes equations with Coriolis force,
in the case when the Mach number $\rm Ma$ is large with respect to the Rossby number $\rm Ro$: namely, we focus on the regime ${\rm Ro}\ll{\rm Ma}$. For this, we
follow a recent approach by Danchin and Mucha in \cite{D-M} and take also a large bulk viscosity coefficient.
We prove that the limit dynamics is described by an incompressible Navier-Stokes type equation, recasted in the vorticy formulation, where however an additional unknown,
linked to density oscillations around a fixed constant reference state, comes into play.
The proof of the convergence is based on a compensated compactness argument and on the derivation of sharp decay estimates for solutions to a heat equation with fast diffusion in time.}

\paragraph*{2010 Mathematics Subject Classification:}{\small 35Q35 
(primary);
35B25, 
76U05, 
35Q86, 
35B40, 
76M45 
(secondary).}

\paragraph*{Keywords:}{\small barotropic Navier-Stokes; incompressible limit; large Mach number; fast rotation; low Rossby number; large bulk viscosity; singular perturbation problem; multi-scale
limit.}

\section{Introduction} \label{s:intro}

In this paper we are interested in studying the dynamics of viscous barotropic fluids which undergo the action of a strong Coriolis force. The main application we have in mind is
to describe flows in the atmosphere; then, two are the main features we want to retain (see e.g. \cite{CR}, \cite{Ped}): on the one hand, almost incompressibility of the flow, on the other hand
the importance of the Earth rotation on the fluid motion.

Before going further, let us present the system of equations which are central to our study.

\subsection{Presentation of the model} \label{ss:i-model}

Let the scalar function $\rho\geq0$ denote the density of the fluid and $u\in\R^3$ its velocity field: forgetting about temperature variations, the model we are going to consider is given by
a \emph{$3$-D compressible Navier-Stokes system with Coriolis force}. In its non-dimensional form, this sytem can be written as follows (see e.g. \cite{F-N}, \cite{Klein} and \cite{Klein_2010}):
\begin{equation} \label{eq:resc-NSC}
\left\{\begin{array}{l}
        \d_t\rho\,+\,\div\big(\rho\,u\big)\,=\,0 \\[1ex]
       \d_t\big(\rho\,u\big)\,+\,\div\big(\rho\,u\otimes u\big)\,+\,\dfrac{1}{\rm Ma^{2}}\nabla P(\rho)\,+\,\dfrac{1}{\rm Ro}\,e^3\times\rho\,u\,-\,\dfrac{1}{\rm Re}\,\div\mbb S(\nabla u)\,=\,0\,.
       \end{array}
\right.
\end{equation}
We set the previous system in the domain $\Omega\,=\,\R^2\times\,]0,1[\,$ (considering the case of $\T^2\times\,]0,1[\,$ would however require slight adaptations),
supplemented with \emph{complete slip} boundary conditions. Such an hypothesis is
a true simplification, since it allows to avoid appearence of boundary layers (the so-called Ekman boundary layers) when considering the fast rotation limit.

The former equation in system \eqref{eq:resc-NSC} is called continuity (or mass) equation, the latter one is referred to as momentum equation.
The scalar function $P\,=\,P(\rho)$ appearing in the momentum equation represents the pressure of the fluid; it is supposed to be a smooth function of the density only.
The term $e^3\times\rho u$, where $e^3\,=\,(0,0,1)$ is the unit vector directed along the
vertical direction and the symbol $\times$ denotes the usual external product of vectors in $\R^3$, takes into account the action of the Coriolis force on the flow,
due to fast rotation of the Earth. Such a form of the Coriolis term is an approximation which is physically well justified at mid-latitudes
(see for instance \cite{C-D-G-G}, \cite{CR} and \cite{Ped}).
Finally, the term $\mbb S(\nabla u)$ is the viscous stress tensor, and it is assumed to satisfy
the Newton's rheological law (see e.g. \cite{F-N})
$$
\mbb{S}(\nabla u)\,=\,\mu\,\left(\nabla u\,+\,^t\nabla u\,-\,\frac{2}{3}\,\div u\,\Id\right)\,+\,\lambda\,\div u\,\Id\,,
$$
where the coefficients $\mu$ and $\lambda$ are called respectively the \emph{shear viscosity} and \emph{bulk viscosity} coefficients. Throughout this paper, we assume that the values of
$\mu$ and $\lambda$ do not depend on the density (nor on the temperature, of course), and that they are strictly positive constants (although such a requirement is not really necessary
for the well-posedness theory of the previous system, for which we refer to \cite{Lions_2} and \cite{F-N-P}).

In the momentum equation, the three adimensional parameters
${\rm Ma}$, ${\rm Ro}$ and ${\rm Re}$ come into play: they represent respectively the Mach, Rossby and Reynolds numbers. The \emph{Mach number} is connected with incompressibility: the lower its value is,
the most the flow tends to behave like an incompressible fluid. The \emph{Rossby number} measures the importance of Earth rotation effects on the dynamics of the fluid: having a low Rossby number
means that the Coriolis force has a predominant effect on the dynamics and then, according to Taylor-Proudman theorem (see e.g. \cite{CR} and \cite{Ped}), the flow tends to be planar and horizontal.
Finally, the \emph{Reynolds number} represents the ratio of inertial forces to viscous forces for given flow conditions; it measures somehow the turbulent behaviour of a fluid:
having a large Reynolds number means that effects of viscosity are negligible and the fluid tends to be turbulent.

Having in mind applications to the study of geophysical flows, 
it is natural for us to consider system \eqref{eq:resc-NSC} in a low Rossby number regime.
Our main goal here is to perform the fast rotation limit in the case of \emph{large Mach numbers}. In order to explain better this claim, let us give an overview of
previous results on similar problems.

\subsection{Previous results, motivations} \label{ss:i-motiv}

The mathematical theory of fluids in fast rotation has now a quite long history, which goes back to the pioneering works of Babin, Mahalov and Nikolaenko
\cite{B-M-N_1996}-\cite{B-M-N_1997}-\cite{B-M-N_1999} concerning the $3$-D incompressible Navier-Stokes equations. We refer to the book \cite{C-D-G-G} for
a complete treatment of the problem for that model, and for further references.

Reviewing the whole literature on the subject goes beyond the scopes of the present introduction. For this reason, we prefer to give a short overview of it, focusing mainly
on the results which are relevant for our study.

The fast rotation limit for fluids presenting density variations is a much more recent topic. In the compressible case, preliminary results were obtained in \cite{B-D_2003} (for the $2$-D case)
and \cite{B-D-GV_2004}, but for well-prepared data only. Dealing with general ill-prepared data in a $3$-D domain was reached for the first time (to the best of our knowledge)
in paper \cite{F-G-N} by Feireisl, Gallagher and Novotn\'y. Afterwards, more general multi-scale limits (still in $3$-D, for ill-prepared initial data) have been considered: for instance,
in \cite{F-G-GV-N} the contribution of the centrifugal force was added to the system, in \cite{F-N_AMPA}-\cite{F-N_CPDE} the interaction with the gravitational force was studied
in a regime of low stratification (see \cite{F-Lu-N} for the case of strong stratification, for well-prepared initial data only).
In this context, let us mention also the study of \cite{G-SR_Mem} concerning the so-called betaplane model (see also the review article \cite{G_2008}), paper \cite{K-M-N},
which is the first one dealing with heat conducting fluids, and works \cite{F_MA}-\cite{F_JMFM}, concerning a Navier-Stokes-Korteweg model with Coriolis force (the results therein
somehow generalise \cite{B-D_2003} under the point of view of the space dimension, the multiple regimes one may consider and the ill-preparation of the initial data).

For the sake of completeness, we point out that, on the side of density-dependent incompressible fluids, the only available study seem to be the one of \cite{F-G_RMI}, which however holds
in two space dimensions.

Let us now come back to the case of viscous compressible flows, which is the relevant framework for us.
The common point of all the previous references was to combine the fast rotation limit (i.e. low Rossby number) together with the incompressible limit (low Mach number). Notice that, as mentioned
above, 
such an investigation is well-justified from the physical viewpoint, for instance if one is interested in describing flows in the atmosphere.
Let us be more precise: given a small parameter $\veps\in\,]0,1]$ and a real number $\alpha\geq0$, in \eqref{eq:resc-NSC} we set
\begin{equation} \label{eq:scalings}
{\rm Ma}\,=\,\veps^{\alpha}\qquad\qquad\mbox{ and }\qquad\qquad {\rm Ro}\,=\,\veps\,.
\end{equation}
All the previous works focused on either the regime $\alpha$ large (due to technical restrictions, $\alpha\geq 10$ in \cite{F-G-GV-N}, $\alpha>2$ in \cite{F-N_CPDE}), or on the regime
$\alpha=1$ (see \cite{F-G-N}, \cite{F-G-GV-N}). The former framework gives rise to a multi-scale problem, where the incompressibility effect is predominant; the latter
is the case of isotropic scaling, and allows one to recover the so-called \emph{quasi-geostrophic balance}, where weak compressibility and fast rotation act at the same order,
and they keep in balance in the limit when $\veps\ra0^+$
(then the asymptotic dynamics is described by a quasi-geostrophic type equation).

We remark that, up to now, the parameter $\rm Re$ has played no special role in the study, and it can be taken equal to $1$ in the previous discussion.
Nonetheless, it is remarkable that in \cite{F-N_AMPA}-\cite{F-N_CPDE} (see also \cite{K-M-N}) the authors are able to perform the limit even in the case of large Reynolds
numbers (namely, ${\rm Re}\,\sim\,\veps^{-k}$, for some $k>0$) by resorting to the relative entropy method; of course, they identify an inviscid equation as the target dynamics.

The main motivation of this paper is to understand what happens in the regimes of large Mach numbers, 
in the sense that ${\rm Ro}\ll{\rm Ma}$. 
More precisely, we want to consider the situation when one takes $0\leq\alpha<1$ in \eqref{eq:scalings}, which have been left open so far.

\subsection{Contents of the paper and overview of the results} \label{ss:i-results}

After noticing that
$$
\div\mbb S(\nabla u)\,=\,\mu\,\Delta u\,+\,\left(\frac{1}{3}\,\mu\,+\,\lambda\right)\,\nabla\div u
$$
and sightly changing the notation for the viscosity coefficients, we can rewrite system \eqref{eq:resc-NSC} in the form
\begin{equation} \label{eq:intro-NSC}
\left\{\begin{array}{l}
        \d_t\rho\,+\,\div\big(\rho\,u\big)\,=\,0 \\[1ex]
       \d_t\big(\rho\,u\big)\,+\,\div\big(\rho\,u\otimes u\big)\,+\,\dfrac{1}{\veps^{2\alpha}}\nabla P(\rho)\,+\,\dfrac{1}{\veps}\,e^3\times\rho\,u\,-\,
       \mu\,\Delta u\,-\,\lambda\,\nabla\div u\,=\,0\,.
       \end{array}
\right.
\end{equation}
Here below, for simplicity we will refer to $\mu$ as the shear viscosity and to $\lambda$ as the bulk viscosity, although (in view of what we have said above)
such a terminology is a bit improper.

As explained before, we are interested in the regimes when $\alpha\in[0,1[\,$, namely the fast rotation is the predominant effect in the dynamics. Indeed, the cases when $\alpha\geq 1$
have already been considered in previous works.
Nonetheless, an easy inspection of the momentum equation in \eqref{eq:intro-NSC} reveals that the limit velocity field is trivial, namely $u\equiv0$, if the 
strong Coriolis force is not compensated by a gradient.

In order to unlock such an \textsl{impasse}, we decide to adopt the approach of the recent paper \cite{D-M} by Danchin and Mucha.
There, the authors considered the problem of performing the incompressible limit for the barotropic compressible Navier-Stokes system (without Coriolis force),
by letting $\lambda\ra+\infty$ in \eqref{eq:intro-NSC}, rather than taking $\alpha>0$ (i.e. a small Mach number). By following this strategy, the authors in \cite{D-M}
are able to prove global existence in critical spaces for \eqref{eq:intro-NSC}, with $\alpha=0$ and without Coriolis force, both in space dimension $d=2$ and $3$,
by exploiting the global well-posedness of the limit problem (which is always true when $d=2$, and assumed \textsl{a priori} when $d=3$).

Inspired by \cite{D-M}, in addition to the previous scalings, we consider in \eqref{eq:intro-NSC} a large bulk viscosity $\lambda=\lam(\veps)\ra+\infty$
for $\veps\ra0^+$. More precisely, we take $\lam=1/\veps^{2\beta}$. Once again, it is easy to see (check also Remark \ref{r:beta} below) that, if $0\leq \beta<1$, the limit is still trivial.
The reason is that the effect of the gradient is not strong enough to compensate the fast rotation, which is still the main feature and then tends to kill off the other processes of the dynamics.

Therefore, we finally fix the choices $0\leq\alpha<1$ and $\lambda=\veps^{-2\beta}$ with $\beta\geq1$, in system \eqref{eq:intro-NSC}. We want to study the asymptotic limit
of this system when $\veps\ra0^+$ in the context of weak solutions. Notice that this is a singular limit problem, where multiple scales act at the sime time, but with different strengths,
on the system. One may object that, having a large bulk viscosity which implies incompressibility of the limit flow, the presence of a small Mach number is useless, and then
one should rather fix $\alpha=0$. Still, we are able to treat the endpoint case $\alpha=0$ only when the space dimension is $d=2$: we will come back to this issue in a while.

To begin with, let us detail our framework. First of all, we will consider \emph{ill prepared initial data}, where however the initial density perturbations around a constant state $\oline\rho$,
say $\oline\rho=1$, are of size $\veps$ (i.e. the same size as the Rossby number) rather than $\veps^\alpha$ (the size of the Mach number) as one might expect.
At first glance, this assumption may appear useless, since in any case classical energy estimates (the only bounds we will use for our family of weak solutions) allow to show that,
at any later time, one only has $\rho_\veps(t)-1\,=\,O(\veps^\alpha)$, in a suitable topology. Nevertheless, thanks to the additional smallness of the initial density perturbations,
by resorting to an argument used in \cite{F-G_RMI} for the incompressible case, we will be able to show uniform bounds (in spaces of very low regularity with respect to the space variable)
on the vertical means of the  quantities $\s_\veps(t)\,:=\,\big(\rho_\veps(t)-1\big)/\veps$.
Such a remarkable property is unexpected from classical energy estimates: in fact, it deeply relies on the structure of the wave system which governs the propagation of fast time oscillations
(due to the ill-preparation of the initial data), which we will call \emph{acoustic-Poincar\'e waves}.

Remark that the previous argument is particularly important in the endpoint case $\alpha=0$, since at first glance (based on energy estimates) one disposes of no smallness at all on the
density perturbations $\rho_\veps(t)-1$. 
Nevertheless, as pointed out above, in this way one gains smallness only on the vertical means of the quantites $\s_\veps$, whereas
a global smallness (even very rough, but quantified in terms of powers of $\veps$) of the quantities $\rho_\veps(t)-1$ is still needed in order to pass to the limit
in the weak formulation of the equations \eqref{eq:intro-NSC}. This is thereason why, when $\alpha=0$, we have to restrict our attention to $2$-dimensional flows: then the uniform bounds
are valid on the whole quantity $\sigma_\veps$ (there is no more need to take vertical averages), and we are able to make our argument work.

Let us resume the overview of our strategy, coming back to the general $3$-D case (but the same applies also in the $2$-D case).
The bounds on $\sigma_\veps$ having been established, the rest of the proof is based on a compensated compactness argument, which allows to prove convergence of the most non-linear term, i.e.
the convective term in the second equation of \eqref{eq:intro-NSC}. Such a technique goes back to the pioneering work \cite{L-M} by P.-L. Lions and Masmoudi,
where the authors dealt with the incompressible limit of the compressible Navier-Stokes equations; it was later adapted by Gallagher and Saint-Raymond in \cite{G-SR_2006}
to the context of fast rotating fluids, and then broadly exploited in similar studies (see e.g. \cite{F-G-GV-N}, \cite{F_JMFM}, \cite{F-G_RMI}).

However, the compensated compactness argument allows to say that the convergence of the convective term reduces, up to small remainders which vanish in the limit, to the convergence of a bilinear term
$\mc B\big(\lan\eta^3_\veps\ran,\lan V_\veps\ran)$, which depends on the vertical avergages (this is the meaning of the notation $\lan\,\cdot\,\ran$)
of both the momentum $V_\veps\,=\,\rho_\veps u_\veps$ and the vertical component of its vorticity $\eta^3_\veps\,=\,\d_1V_\veps^2-\d_2V_\veps^1$. Therefore, in order to pass to the limit
in $\mc B$, we need compactness in time for one of the previous two quantities. Notice that taking the $\curl$ of the momentum equation in \eqref{eq:intro-NSC} yields an equation for $\eta^3_\veps$:
\begin{equation} \label{eq:intro-vort}
\veps\,\d_t\eta_\veps^3\,+\,\divh V^h_\veps\,=\,\veps\,F_\veps\,,
\end{equation}
where the notation $F_\veps$ encodes terms which are uniformly bounded in suitable spaces and we have set $\divh V^h\,=\,\d_1V^1\,+\,\d_2V^2$.
The problem is that the previous relation entails fast oscillations in time
for the vorticity $\eta^3_\veps$, unless we are able to show that also $\div V_\veps$ is small, and more precisely of order $O(\veps)$ in suitable norms. Notice that we are not too far from that
property, if one thinks that $\div V_\veps\,\sim\,\div u_\veps$, and the latter is of order $O(\veps^\beta)$, with $\beta\geq1$; but the difference between those two quantities is
only of order $O(\veps^\alpha)$ (even when $d=2$ and $\alpha=0$, because the regularity of $\s_\veps$ is too rough to give sense to the product $\s_\veps\,u_\veps$).
We then need additional smallness on the quantity $\div V_\veps$: notice that such a smallness cannot really come from the wave system, since acoustic-Poincar\'e waves
travel at speed of the Mach number, hence proportional to $\veps^\alpha$. For this reason, dispersive estimates used in e.g. \cite{F-G-GV-N}, \cite{F-N_CPDE} seem to be
out of use in our context. The fundamental remark is rather that the momentum equation in \eqref{eq:intro-NSC} hides a heat-like equation for the potential part
$\nabla\Phi_\veps$ of $V_\veps$, with fast diffusion in time:
\begin{equation} \label{eq:intro-Phi}
\d_t\Phi_\veps\,-\,\frac{1}{\veps^{2\beta}}\,\Delta\Phi_\veps\,=\,G_\veps\,.
\end{equation}
It is well-known that solutions to the heat equation decay in time (see e.g. \cite{Z}): we have to show the exact counterpart for the previous equation, where the long-time behaviour
is instead replaced by the asymptotic behaviour with respect to $\veps$. The key point is to get the sharp decay with respect to the singular parameter $\veps$, since we want
to insert that control in \eqref{eq:intro-vort}. On the one hand, for doing so we lose integrability for times close to $0$, so that we have to implement an additional approximation
procedure. On the other hand, the main difficulty comes from the fact that the forcing term $G_\veps$ in \eqref{eq:intro-Phi} is not uniformly bounded in $\veps$: then the idea is to differentiate
\eqref{eq:intro-Phi} as many times as one needs, steerred by the basic principle that the derivatives of the solution to the heat equation decay better than the solution itself.
In the end, we are able to gain smallness of $(-\Delta)^s\Phi_\veps$, for some $s\geq 1$ large enough. Inserting those bounds in \eqref{eq:intro-vort}, we get compactness
in time of higher order derivatives of $\eta^3_\veps$ and finally, interpolating this property with the uniform bounds for the vorticity, we get strong convergence in suitable
$L^r_T(L^p)$ spaces for $\lan\eta^3_\veps\ran$, which allows us to pass to the limit in the $\mc B$ term.

In the end, we can prove convergence in the vorticity formulation of the momentum equation in \eqref{eq:intro-NSC}: as it was already the case in \cite{F-G_RMI}, we find an \emph{underdetermined
limit equation}, which links both the limit vorticity $\omega\,=\,\d_1u^2-\d_2u^1$ (recall that the density tends to $1$ when $\veps\ra0^+$) and the limit $\s$ of the (vertical mean of the) density
variations $\s_\veps$. As already pointed out, the bounds on $\s_\veps$ are in too negative spaces in order to use the mass equation in \eqref{eq:intro-NSC} and deduce an equation for
$\s$ in the limit. This is the main result of the paper, which is contained in Theorem \ref{t:alpha} for the $3$-D case, in Theorem \ref{t:0} for $\alpha=0$ and $d=2$.
As a last comment, let us remark that, for the latter case $\alpha=0$ and $d=2$, we are also able to show a conditional convergence result (see Theorem \ref{t:2D-full}),
which allows to recover a full system in the limit, where the equations for both $\s$ and the limit velocity $u$ are well identified. However, such a result is based
on assuming \textsl{a priori} higher order bounds for the family $\big(u_\veps\big)_\veps$: on the one hand, those bounds seem to be hardly satisfied by finite energy weak solutions,
on the other hand higher order energy estimates seem to be not uniform in the singular parameter $\veps$. This is why our result is only conditional.

\medbreak
To conclude, let us give an overview of the paper. In the next section, we collect our assumptions and state our main results. Section \ref{s:tools} contains
some tools which are useful in our analysis: namely, some elements of Littlewood-Paley decomposition and paradifferential calculus, and also
the decay estimates for the linear equation \eqref{eq:intro-Phi}, which (as already remarked) play a key role in our proof. In Section \ref{s:singular} we study the singular part
of the equations, stating uniform bounds on our family of weak solutions and establishing constraints the limit-points have to satisfy.
Finally, in Section \ref{s:limit-a} we perform the limit in the weak formulation of system \eqref{eq:intro-NSC} when $0<\alpha<1$, in Section \ref{s:limit-0} when $\alpha=0$ and
$d=2$. As already mentioned, in the last part of Section 6 we will also state and prove our conditional result, where the limit dynamics for $\s$ and $u$ is completely characterised.

\paragraph{Notation.}
Let us introduce some notation here.

We will decompose $x\in\Omega\,:=\,\R^2\times\,]0,1[\,$ into $x=(x^h,x^3)$, with $x^h\in\R^2$ denoting its horizontal component. Analogously,
for a vector-field $v=(v^1,v^2,v^3)\in\R^3$ we set $v^h=(v^1,v^2)$, and we define the differential operators
$\nabla_h$ and $\div_{\!h}$ as the usual operators, but acting just with respect to $x^h$.
Finally, we define the operator $\nabla^\perp_h\,:=\,\bigl(-\d_2\,,\,\d_1\bigr)$ and, analogously, for a $2$-D vector-field $w$ we set $w^\perp\,=\,\big(-w^2,w^1\big)$.
For a $3$-D vector-field $v$, we will denote $\curl v\,=\,\nabla\times v$ its $\curl$, where the symbol $\times$ stands for the usual external product in $\R^3$; notice that
$\big(\curl v\big)^3\,=\,\curlh v^h\,=\,\d_1v^2\,-\,\d_2v^1$.
On the other hand, if $w$ is a $2$-D vector-field, we set $\curl w\,=\,\d_1w^2\,-\,\d_2w^1$.

Moreover, since we will reconduct ourselves to a periodic problem in the $x^3$-variable (see Remark \ref{r:period-bc} below),
we also introduce the following  decomposition: for a vector-field $X$, we write
\begin{equation} \label{dec:vert-av}
X(x)\,=\,\langle X\rangle(x^h)\,+\,\wtilde{X}(x)\,,\qquad\qquad\mbox{ where }\qquad
\langle X\rangle(x^h)\,:=\,\int_{\mbb{T}}X(x^h,x^3)\,dx^3\,.
\end{equation}
Notice that $\wtilde{X}$ has zero vertical average, and therefore we can write $\wtilde{X}(x)\,=\,\d_3\wtilde{Z}(x)$,
with $\wtilde{Z}$ having zero vertical average as well.
We also set $\wtilde{Z}\,=\,\mc{I}(\wtilde{X})\,=\,\d_3^{-1}\wtilde{X}$.

For convenience, for any $T>0$, $p\in[1,+\infty]$ and any Banach space~$X$ over $\Omega$, we will often use the notation
$L^p_T(X)\,:=\,L^p\bigl([0,T[\,;X(\Omega)\bigr)$. Moreover, we will use the symbols $\rightharpoonup$ and $\stackrel{*}{\rightharpoonup}$ to denote respectively
the weak and weak-$*$ convergences in the space $L^p_T(X)$.

We will denote by $\mc C^{0,\eta}$ the space of H\"older continuous functions of exponent $0<\eta<1$; in the endpoint case $\eta=1$, we will use the notation $W^{1,\infty}$.

\subsubsection*{Acknowledgements}
{\small
The work of the author has been partially supported by the LABEX MILYON (ANR-10-LABX-0070) of Universit\'e de Lyon, within the program ``Investissement d'Avenir''
(ANR-11-IDEX-0007),  and by the projects BORDS (ANR-16-CE40-0027-01) and SingFlows (ANR-18-CE40-0027), all operated by the French National Research Agency (ANR).

The author wants to express his gratitude to L. Brandolese for pointing out reference \cite{Z}, and to R. Danchin and I. Gallagher for interesting remarks on a preliminary version of the paper.
}

\section{Assumptions and results} \label{s:results}
Fix the domain
$$
\Omega\,:=\,\R^2\times\,]0,1[
$$
and take two real parameters
$$
0\leq\alpha<1\qquad\qquad\mbox{ and }\qquad\qquad \beta\geq1\,.
$$

We consider, on $\R_+\times\Omega$, the following rescaled $3$-D barotropic Navier-Stokes system with Coriolis force:
\begin{equation} \label{eq:sing-NSC}
\left\{\begin{array}{l}
        \d_t\rho\,+\,\div\big(\rho\,u\big)\,=\,0 \\[1ex]
       \d_t\big(\rho\,u\big)\,+\,\div\big(\rho\,u\otimes u\big)\,+\,\dfrac{1}{\veps^{2\alpha}}\nabla P(\rho)\,+\,\dfrac{1}{\veps}\,e^3\times\rho\,u\,-\,
       \mu\,\Delta u\,-\,\dfrac{1}{\veps^{2\beta}}\nabla\div u\,=\,0\,.
       \end{array}
\right.
\end{equation}
Throughout the paper, we assume that $P\in\mc C\big([0,+\infty[\big)\cap\mc C^2\big(]0,+\infty[\big)$, with
\begin{equation} \label{hyp:P}
P'(z)>0\qquad\mbox{ for all }\;z>0\,,\qquad\qquad\qquad \lim_{z\ra+\infty}\frac{P'(z)}{z^{\g-1}}\,=\,a\,>\,0\,,
\end{equation}
for some finite $\g>3/2$ in the case of a $3$-D domain, $\g>1$ in dimension $2$.
Let us immediately introduce the \emph{internal energy} function
$$
H(z)\,:=\,z\int^z_1\frac{P(z)}{z^2}\,dz\,.
$$
Notice that $H''(z)\,=\,P'(z)/z$. In addition, for any positive $\rho$ and $\trho$, let us define the \emph{relative entropy} functional
$$
E\big(\rho,\trho\big)\,:=\,H(\rho)\,-\,H(\trho)\,-\,H'(\trho)\,\big(\rho-\trho\big)\,.
$$

We supplement system \eqref{eq:sing-NSC} with complete slip boundary conditions: if we denote by $n$ the unitary outward normal to the boundary $\d\Omega$ of the domain (observe that
$\d\Omega=\{x^3=0\}\cup\{x^3=1\}$), we impose
\begin{equation} \label{eq:bc}
\left(u\cdot n\right)_{|\d\Omega}\,=\,u^3_{|\d\Omega}\,=\,0\,,\qquad
\bigl((Du)n\times n\bigr)_{|\d\Omega}\,=\,0\,,
\end{equation}
where $Du\,=\,\big(\nabla u\,+\,^t\nabla u\big)/2$ denotes the symmetric part of the tensor $\nabla u$.

\begin{rem} \label{r:period-bc}
As is well-known (see e.g. \cite{Ebin}), equations \eqref{eq:sing-NSC}, supplemented by complete slip boundary boundary conditions \eqref{eq:bc},
can be recasted as a periodic problem with respect to the vertical variable, in the new domain
$$
\Omega\,=\,\R^2\,\times\,\mbb{T}^1\,,\qquad\qquad\mbox{ with }\qquad\mbb{T}^1\,:=\,[-1,1]/\sim\,,
$$
where $\sim$ denotes the equivalence relation which identifies $-1$ and $1$. Indeed, the equations are invariant if we extend
$\rho$ and $u^h$ as even functions with respect to $x^3$, and $u^3$ as an odd function.

In what follows, we will always assume that such modifications have been performed on the initial data, and
that the respective solutions keep the same symmetry properties.
\end{rem}

We consider general \emph{ill-prepared} initial data. However, in order to perform the limit, it will be fundamental to derive further
compactness for the density function, following the approach proposed in \cite{F-G_RMI}. For this, we need the initial density to be close enough, at order $\veps$ (rather than $\veps^\alpha$,
i.e. at order dictated by the Mach number) to the limit state (say) $\oline\rho=1$.
Then, we assume the following conditions, for all $\veps\in\,]0,1]$:
\begin{enumerate}[(1)]
 \item $\rho_{0,\veps}\,=\,1\,+\,\veps\,r_{0,\veps}$, where $\big(r_{0,\veps}\big)_\veps\,\subset\,L^2(\Omega)\cap L^\infty(\Omega)$.
 \item the sequence $\big(u_{0,\veps}\big)_\veps$ is uniformly bounded in $L^2(\Omega)\cap L^\infty(\Omega)$.
\end{enumerate}
Up to the extraction of a suitable subsequence, which we do not relabel, we have
\begin{equation} \label{cv:in-data}
 r_{0,\veps}\,\stackrel{*}{\rightharpoonup}\,r_0\qquad\mbox{ and }\qquad  u_{0,\veps}\,\stackrel{*}{\rightharpoonup}\,u_0\qquad\qquad\mbox{ in }\qquad\qquad L^2(\Omega)\cap L^\infty(\Omega)\,,
\end{equation}
for suitable functions $r_0$ and $u_0$ belonging to that space.

For any $\veps\in\,]0,1]$ fixed, we supplement system \eqref{eq:sing-NSC} with the initial datum
$$
\rho_{|t=0}\,=\,\rho_{0,\veps}\qquad\qquad \mbox{ and }\qquad\qquad \big(\rho\,u\big)_{|t=0}\,=\,\rho_{0,\veps}\,u_{0,\veps}\,.
$$
We are interested in studying the asymptotic behaviour of system \eqref{eq:sing-NSC} in the framework of weak solutions. So, let us start by recalling their definition.
\begin{defin} \label{d:weak}
We say that $\bigl(\rho,u\bigr)$ is a \emph{weak solution} to equations \eqref{eq:sing-NSC}-\eqref{eq:bc}
in $[0,T[\,\times\Omega$ (for some $T>0$), related to the initial datum $(\rho_{0,\veps},u_{0,\veps})$ specified above, if the following conditions are verified:
\begin{enumerate}[(i)]
 \item $\rho\geq0$ almost everywhere, $\rho-1\,\in\,L^\infty\big([0,T[\,;L^2+L^\g(\Omega)\big)$ and the continuity equation 
 is satisfied in the weak sense: for all $\vphi\in\mc D\big([0,T[\,\times\Omega\big)$, one has
$$
-\int^T_0\int_\Omega\Big(\rho\,\d_t\vphi\,+\,\rho\,u\,\cdot\,\nabla\vphi\Big)\,dx\,dt\,=\,\int_\Omega\rho_{0,\veps}\,\vphi(0)\,dx\,;
$$
\item $\sqrt{\rho}u\,\in L^\infty\big([0,T[\,;L^2(\Omega)\big)$, $u\,\in\,L^2\big([0,T[\,;H^1(\Omega)\big)$, $P(\rho)\,\in\,L^1_{\rm loc}\big([0,T[\,\times\Omega\big)$
and the momentum equation 
is satisfied in the weak sense: for all $\psi\in\mc D\big([0,T[\,\times\Omega;\R^3\big)$, one has
\begin{align*}
&\hspace{-0.5cm}-\int^T_0\!\!\!\int_\Omega\left(\rho u\cdot\d_t\psi\,+\,\rho u\otimes u:\nabla\psi\,+\,\dfrac{1}{\veps^{2\alpha}}\,P(\rho)\,\div\psi\right)\,dx\,dt \\
&\quad +\,\int^T_0\!\!\!\int_\Omega\left(\dfrac{1}{\veps}\,e^3\times\rho\,u\cdot\psi\,+\,\mu\,\nabla u:\nabla\psi\,+\,\frac{1}{\veps^{2\beta}}\,\div u\;\div\psi\right)\,dx\,dt\,=\,
\int_\Omega\rho_{0,\veps}\,u_{0,\veps}\cdot\psi(0)\,dx\,;
\end{align*}
\item the following \emph{energy inequality} holds true for almost every $t\in[0,T[\,$:
\begin{align}
&\int_\Omega\left(\frac{1}{2}\,\rho(t)\,\big|u(t)\big|^2\,+\,\frac{1}{\veps^{2\alpha}}\,E\big(\rho(t),1\big)\right)\,dx \label{est:energy} \\
&+\,\int^t_0\!\int_\Omega\left(\mu\,\big|\nabla u\big|^2\,+\,\frac{1}{\veps^{2\beta}}\,\big|\div u\big|^2\right)\,dx\,dt\,\leq\,
\int_\Omega\left(\frac{1}{2}\,\rho_{0,\veps}\,\big|u_{0,\veps}\big|^2\,+\,\frac{1}{\veps^{2\alpha}}\,E\big(\rho_{0,\veps},1\big)\right)\,dx\,. \nonumber
\end{align}
\end{enumerate}
The solution is said to be \emph{global} if the previous conditions hold for all $T>0$.
\end{defin}

For any fixed value of the parameter $\veps\in\,]0,1]$, suppose an initial datum $\big(\rho_{0,\veps},u_{0,\veps}\big)$ is given, satisfying the hypotheses specified above.
The existence of a \emph{global in time} weak solution (in the sense of the previous definition) $\big(\rho_\veps,u_\veps\big)$ is guaranteed by the classical theory of P.-L. Lions \cite{Lions_2},
with the necessary modifications implemented in \cite{F-N-P} in order to handle the physically relevant range of adiabatic exponents $\g>3/2$ in dimension $3$,
$\g>1$ for a $2$-D space domain.

The main goal of the present paper is to characterise the limit dynamics of the sequence $\big(\rho_\veps,u_\veps\big)_\veps$ when $\veps\ra0^+$. We are interested in the regimes $0\leq\alpha<1$
(otherwiser the limit has already been performed) and $\beta\geq1$ (otherwise the limit is trivial, see the Introduction and Remark \ref{r:beta} below).

Our first main result concerns the $3$-D system, in the case when the Mach number is supposed to be small, i.e. $\alpha>0$.
\begin{thm}\label{t:alpha}
Let $0<\alpha<1$ and $\beta\geq1$. Take a sequence of initial data $\big(\rho_{0,\veps},u_{0,\veps}\big)_\veps$ verifying the assumptions stated above and the symmetry properties
of Remark \ref{r:period-bc}, and consider
a sequence $\big(\rho_\veps,u_\veps\big)_\veps$ of associated global finite energy weak solutions to system \eqref{eq:sing-NSC} in $\R_+\times\Omega$, in the sense of Definition \ref{d:weak}.
Let $u_0$ and $r_0$ be defined as in \eqref{cv:in-data}, and, for all $\veps>0$, set $r_\veps\,:=\,\big(\rho_\veps-1\big)/\veps^\alpha$ and $\sigma_\veps\,:=\,\big(\rho_\veps-1\big)/\veps$.

Then, $r_\veps\,\stackrel{*}{\rightharpoonup}\,0$ in the space $L^\infty\big(\R_+;\big(L^2+L^\gamma\big)(\Omega)\big)$. Moreover,
there exists a scalar distribution $\sigma\in L^2_{\rm loc}\big(\R_+;H^{-7/2-\delta}_{\rm loc}(\Omega)\big)$, for $\de>0$ arbitrarily small, and a vector-field
$u\in L^2_{\rm loc}\big(\R_+;H^1(\Omega)\big)$
of the form $u\,=\,\big(u^h,0\big)$, with $u^h\,=\,u^h(t,x^h)$ and $\divh u^h\,=\,0$, such that, up to the extraction of a subsequence, one has
$\lan\sigma_\veps\ran\,\stackrel{*}{\rightharpoonup}\,\sigma$ and $u_\veps\,\rightharpoonup\,u$ in the respective functional spaces. \\
Finally, after defining $\omega\,:=\,\curlh u^h$, the couple $\big(\omega,\sigma\big)$ satisfies (in the weak sense) the equation
\begin{equation} \label{eq:limit-a}
\d_t\big(\omega\,-\,\sigma\big)\,+\,u^h\cdot\nabla_h\omega\,-\,\mu\Delta_h\omega\,=\,0\,,
\end{equation}
supplemented with initial datum $\big(\omega-\sigma\big)_{|t=0}\,=\,\curlh\lan u^h_0\ran-\lan r_0\ran$.
\end{thm}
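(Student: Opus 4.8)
The plan is to combine $\veps$-uniform energy bounds, the structural constraints forced by the singular terms, and a compensated-compactness argument for the convective term, the needed time-compactness being supplied by the fast-diffusion estimates of Section~\ref{s:tools}.

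\emph{Step 1 (uniform bounds and first limits).} First I would read off from the energy inequality \eqref{est:energy}, together with the coercivity of $E(\,\cdot\,,1)$, the $\veps$-uniform bounds: $u_\veps$ in $L^2_{\rm loc}(\R_+;H^1)$, $\sqrt{\rho_\veps}\,u_\veps$ in $L^\infty_{\rm loc}(\R_+;L^2)$, $r_\veps=(\rho_\veps-1)/\veps^\alpha$ in $L^\infty_{\rm loc}(\R_+;L^2+L^\g)$, and $\veps^{-\beta}\div u_\veps$ in $L^2_{\rm loc}(\R_+;L^2)$. In particular $\rho_\veps\to1$ and $\div u_\veps=O(\veps^\beta)$. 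Extracting subsequences yields $u_\veps\rightharpoonup u$, $r_\veps\stackrel{*}{\rightharpoonup}r$ and $V_\veps:=\rho_\veps u_\veps\rightharpoonup u$ (the corrector $\veps^\alpha r_\veps u_\veps$ being negligible). Since $\veps^{-\alpha}\div u_\veps=O(\veps^{\beta-\alpha})\to0$, the continuity equation rewritten for $r_\veps$ passes to the limit into a homogeneous transport equation $\d_t r+u^h\cdot\nabla_h r=0$; as the initial trace $r_\veps|_{t=0}=\veps^{1-\alpha}r_{0,\veps}$ vanishes, I conclude $r\equiv0$, that is $r_\veps\stackrel{*}{\rightharpoonup}0$.

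\emph{Step 2 (constraints on the limit).} Next I would identify the constraints, testing the momentum equation against suitable functions after multiplying by the relevant powers of $\veps$. From $\div u_\veps=O(\veps^\beta)$ one gets $\div u=0$; the Coriolis term $\veps^{-1}e^3\times V_\veps$ forces geostrophic balance, hence, together with the slip conditions recast periodically as in Remark~\ref{r:period-bc}, the Taylor--Proudman structure $u=(u^h,0)$ with $u^h=u^h(t,x^h)$ and $\divh u^h=0$. Independently, exploiting the extra $O(\veps)$ smallness of the initial density perturbation and the structure of the acoustic--Poincar\'e wave system, I would prove uniform bounds for the vertical averages $\lan\sigma_\veps\ran$ in a space of very low regularity, whence $\lan\sigma_\veps\ran\stackrel{*}{\rightharpoonup}\sigma$. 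These are precisely the bounds and constraints of Section~\ref{s:singular}.

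\emph{Step 3 (cancellation of the singular term).} The core idea is to close an equation by coupling vorticity and density. Taking the horizontal curl of the momentum equation annihilates the two gradient terms and converts the Coriolis term into $\veps^{-1}\divh V^h_\veps$, giving \eqref{eq:intro-vort} for $\eta^3_\veps=\curlh V^h_\veps$; dividing by $\veps$ and averaging vertically (so that $\lan\d_3^2(\,\cdot\,)\ran=0$) yields
\[
\d_t\lan\eta^3_\veps\ran\,+\,\frac{1}{\veps}\,\divh\lan V^h_\veps\ran\,=\,\lan F_\veps\ran\,,
\]
with $\lan F_\veps\ran$ uniformly bounded and collecting the viscous and convective contributions. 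On the other hand the vertical average of the continuity equation reads $\d_t\lan\sigma_\veps\ran+\veps^{-1}\divh\lan V^h_\veps\ran=0$. Subtracting, the singular term cancels exactly and leaves the $\veps$-free relation
\[
\d_t\big(\lan\eta^3_\veps\ran-\lan\sigma_\veps\ran\big)\,=\,\lan F_\veps\ran\,.
\]

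\emph{Step 4 (passage to the limit; the main obstacle).} It remains to pass to the limit in $\lan F_\veps\ran$. Using $u^h=u^h(x^h)$ one has $\lan\eta^3_\veps\ran\rightharpoonup\omega=\curlh u^h$, so the viscous part converges to $\mu\Delta_h\omega$, while $\lan\sigma_\veps\ran\stackrel{*}{\rightharpoonup}\sigma$. The delicate term --- and the main obstacle --- is the convective one, quadratic in the merely weakly converging $V_\veps$. Here I would run the compensated-compactness scheme of \cite{L-M,G-SR_2006}: discarding the fast oscillations and the divergence contributions (which vanish in the limit), its convergence reduces to that of a bilinear quantity $\mc B\big(\lan\eta^3_\veps\ran,\lan V_\veps\ran\big)$ and hence to \emph{strong time-compactness} of $\lan\eta^3_\veps\ran$. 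This cannot follow from \eqref{eq:intro-vort} as such, since the latter carries fast time oscillations unless $\divh V^h_\veps=O(\veps)$; the missing smallness is provided by the fast-diffusion heat equation \eqref{eq:intro-Phi} for the potential part $\Phi_\veps$ of $V_\veps$. Applying the sharp $\veps$-decay estimates of Section~\ref{s:tools} to sufficiently many derivatives $(-\Delta)^s\Phi_\veps$, I would gain enough smallness of $\div V_\veps$ in high-order norms, insert it into \eqref{eq:intro-vort} to get time-equicontinuity of high derivatives of $\eta^3_\veps$, and interpolate with the uniform bounds to deduce strong $L^r_{\rm loc}(L^p)$ convergence of $\lan\eta^3_\veps\ran$. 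This gives $\lan F_\veps\ran\rightharpoonup\mu\Delta_h\omega-u^h\cdot\nabla_h\omega$ (the last term being the $2$-D vorticity transport of the divergence-free field $u^h$), so that the $\veps$-free relation converges to \eqref{eq:limit-a}. Finally, testing against functions supported near $t=0$ and using \eqref{cv:in-data} with $\sigma_{0,\veps}=r_{0,\veps}$ and $\eta^3_\veps|_{t=0}=\curlh\lan\rho_{0,\veps}u^h_{0,\veps}\ran$ identifies the initial datum $(\omega-\sigma)|_{t=0}=\curlh\lan u^h_0\ran-\lan r_0\ran$.
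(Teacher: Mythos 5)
Your proposal is correct and follows essentially the same route as the paper: uniform energy bounds and structural constraints, the exact cancellation of the singular Coriolis term against the vertically averaged mass equation (the paper's relation \eqref{eq:curlV-sigma}, which yields both the low-regularity bounds on $\lan\s_\veps\ran$ and the key $\veps$-free equation), and a compensated compactness argument for the convective term whose missing time-compactness for $\lan\eta^3_\veps\ran$ is supplied by the sharp decay estimates of Theorem \ref{t:fast-heat} applied to sufficiently high derivatives of the potential part $\Phi_\veps$. The only cosmetic difference is that you pass to the limit directly in the subtracted curl equation, whereas the paper tests the momentum equation against $\psi=\big(\nabla^\perp_h\vphi,0\big)$ and rewrites the Coriolis term through the mass equation --- an equivalent computation --- and it carries out in detail the steps you leave implicit (the Littlewood--Paley regularisation $S_M$ with Lemmas \ref{l:1_conv}--\ref{l:uxu_conv}, the cut-off away from $t=0$ forced by the loss of time integrability in Theorem \ref{t:fast-heat}, and the verification via the wave system \eqref{eq:mean-free} that the oscillating part $\mc T^2_{\veps,M}$ is a remainder).
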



The natural question is then trying to reach the endpoint case $\alpha=0$: our second result is exactly about that framework. However, it turns out
that, for technical reasons, we are able to treat this case only when the fluid is supposed to be planar, and then the space dimension is equal to $2$.
Notice that, in this instance, system \eqref{eq:sing-NSC} becomes
\begin{equation} \label{eq:sing-NSC_2D}
\left\{\begin{array}{l}
        \d_t\rho\,+\,\div\big(\rho\,u\big)\,=\,0 \\[1ex]
       \d_t\big(\rho\,u\big)\,+\,\div\big(\rho\,u\otimes u\big)\,+\,\dfrac{1}{\veps^{2\alpha}}\nabla P(\rho)\,+\,\dfrac{1}{\veps}\,\rho\,u^\perp\,-\,
       \mu\,\Delta u\,-\,\dfrac{1}{\veps^{2\beta}}\nabla\div u\,=\,0\,,
       \end{array}
\right.
\end{equation}
set in $\R_+\times\R^2$. In the previous system, we have used the notation $v^\perp\,:=\,\big(-v^2,v^1\big)$. Of course, we dismiss the boundary conditions \eqref{eq:bc}.
In addition, in the $2$-D case, we omit to write the subscripts and superscripts ``h'', since now all the quantities are horizontal.

In this case, we can prove the following result.
\begin{thm}\label{t:0}
Let $\alpha=0$ and $\g>1$ in \eqref{eq:sing-NSC_2D}. 
Take a sequence of initial data $\big(\rho_{0,\veps},u_{0,\veps}\big)_\veps$ verifying the assumptions stated here above, and consider
a sequence $\big(\rho_\veps,u_\veps\big)_\veps$ of associated global finite energy weak solutions to system \eqref{eq:sing-NSC_2D} in $\R_+\times\R^2$, in the sense of Definition \ref{d:weak}.
Let $u_0$ and $r_0$ be defined as in \eqref{cv:in-data}, and, for all $\veps>0$, set $r_\veps\,:=\,\rho_\veps-1$ and $\sigma_\veps\,:=\,\big(\rho_\veps-1\big)/\veps$.

Then, $r_\veps$ convergese to $0$ in the weak-$*$ topology of $L^\infty\big(\R_+;\big(L^2+L^\gamma\big)(\R^2)\big)$.
Moreover, there exists a scalar distribution $\sigma\in L^2_{\rm loc}\big(\R_+;H^{-3-\delta}_{\rm loc}(\R^2)\big)$, for $\de>0$ arbitrarily small, and a vector-field
$u\in L^2_{\rm loc}\big(\R_+;H^1(\R^2)\big)$, with $\div u\,=\,0$, such that, up to the extraction of a subsequence, one has
$\sigma_\veps\,\stackrel{*}{\rightharpoonup}\,\sigma$ and $u_\veps\,\rightharpoonup\,u$ in the respective functional spaces. \\
Finally, after defining $\omega\,:=\,\curl u$, the couple $\big(\omega,\sigma\big)$ satisfies (in the weak sense) the equation
\begin{equation} \label{eq:limit-0}
\d_t\big(\omega\,-\,\sigma\big)\,+\,u\cdot\nabla\omega\,-\,\mu\Delta\omega\,=\,0\,,
\end{equation}
supplemented with initial datum $\big(\omega-\sigma\big)_{|t=0}\,=\,\curl u_0- r_0$.
\end{thm}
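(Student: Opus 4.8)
The plan is to follow the general scheme described in the introduction, taking advantage of the fact that in dimension two no vertical average is needed. First I would read off from the energy inequality \eqref{est:energy} all the uniform bounds: with $\alpha=0$ it controls $\big(\sqrt{\rho_\veps}\,u_\veps\big)_\veps$ in $L^\infty_t(L^2)$, $\big(\nabla u_\veps\big)_\veps$ in $L^2_t(L^2)$, the relative entropy $E(\rho_\veps,1)$ in $L^\infty_t(L^1)$ — hence $r_\veps=\rho_\veps-1$ in $L^\infty_t\big(L^2+L^\g\big)$ — and, most importantly, $\big(\veps^{-\beta}\div u_\veps\big)_\veps$ in $L^2_t(L^2)$, so that $\div u_\veps=O(\veps^\beta)$. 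Extracting weak limits gives $u_\veps\upra u$ in $L^2_t(H^1)$ with $\div u=0$, because $\beta\geq1$. The extra bound on $\s_\veps=(\rho_\veps-1)/\veps$ in a space of negative regularity I would obtain from the structure of the acoustic--Poincar\'e wave system, as in \cite{F-G_RMI}; the point of restricting to $d=2$ is precisely that this bound then holds for the whole of $\s_\veps$ and not merely for its vertical mean. Since $r_\veps=\veps\,\s_\veps$, combining this with the uniform $L^\infty_t(L^2+L^\g)$ bound forces $r_\veps\upra^*0$, and I would extract $\s_\veps\upra^*\s$.

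Next I would pass to the vorticity formulation by taking $\curl$ of the momentum equation in \eqref{eq:sing-NSC_2D}. The bulk-viscosity term disappears since $\curl\nabla=0$, the viscous term becomes $-\mu\Delta\omega_\veps$ with $\omega_\veps:=\curl V_\veps$ and $V_\veps:=\rho_\veps u_\veps$, while the singular Coriolis term simplifies drastically: a direct computation gives $\curl(\rho_\veps u_\veps^\perp)=\div V_\veps$, and the continuity equation $\veps\,\d_t\s_\veps+\div V_\veps=0$ then turns $\veps^{-1}\curl(\rho_\veps u_\veps^\perp)$ into $-\d_t\s_\veps$. Thus the singular term combines with $\d_t\omega_\veps$ to produce the exact combination $\d_t(\omega_\veps-\s_\veps)$, which converges (using $\rho_\veps\ra1$, so that $\omega_\veps\upra\omega=\curl u$) to $\d_t(\omega-\s)$. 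All terms being now under control except the convective one, the whole problem reduces to passing to the limit in $\curl\div(\rho_\veps u_\veps\otimes u_\veps)$.

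For that term I would use the compensated compactness technique of Lions and Masmoudi, adapted to the rotating setting by Gallagher and Saint-Raymond: up to remainders that vanish in the limit, it reduces to a bilinear expression $\mc B(\eta_\veps,V_\veps)$ depending on $V_\veps$ and on its scalar vorticity $\eta_\veps=\curl V_\veps$, and passing to the limit in $\mc B$ requires strong compactness in time of $\eta_\veps$. This is where the main obstacle lies. The vorticity satisfies $\veps\,\d_t\eta_\veps+\div V_\veps=\veps F_\veps$, which forces fast time oscillations unless $\div V_\veps=O(\veps)$; but $\div V_\veps=\div u_\veps+\div(r_\veps u_\veps)$, and when $\alpha=0$ the term $\div(r_\veps u_\veps)$ is only $O(1)$ — the regularity of $\s_\veps$ being too low even to make sense of the product $\s_\veps u_\veps$ — so no smallness is available from the energy bounds alone.

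The resolution, and the genuinely hard part, is the fast-diffusion heat equation \eqref{eq:intro-Phi} satisfied by the potential part $\Phi_\veps$ of $V_\veps$. Using the sharp $\veps$-decay estimates of Section \ref{s:tools} — proved by differentiating \eqref{eq:intro-Phi} enough times to offset the fact that the forcing $G_\veps$ is not uniformly bounded, at the cost of losing integrability near $t=0$ and hence of an extra approximation step — I would gain smallness of $(-\Delta)^s\Phi_\veps$ for some large $s$. Inserting this into the vorticity equation yields time compactness of high-order derivatives of $\eta_\veps$; interpolating with the uniform bounds then gives strong convergence of $\eta_\veps$ in a suitable $L^r_T(L^p)$, which is exactly what allows passage to the limit in $\mc B$ and hence the identification of $u\cdot\nabla\omega$ as the limit of the convective term. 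Collecting all the limits produces \eqref{eq:limit-0}, and testing the time-integrated formulation against functions supported near $t=0$, together with the convergences \eqref{cv:in-data}, identifies the initial datum $(\omega-\s)_{|t=0}=\curl u_0-r_0$. I expect the decay estimate for the fast-diffusion equation to be the true difficulty.
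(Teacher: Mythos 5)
Your scheme follows the paper's architecture faithfully (energy bounds; wave-system bound on the whole of $\s_\veps$ in $d=2$; vorticity formulation absorbing the Coriolis term through the mass equation, which is exactly how the paper handles it via test functions $\psi=\nabla^\perp\vphi$; compensated compactness; fast-diffusion decay for the potential part of $V_\veps$). But there is one genuine gap. You assert that ``no smallness is available'' for $r_\veps$ when $\alpha=0$ and that the product $\s_\veps\,u_\veps$ cannot even be given a meaning, and you then let the heat-equation decay carry the whole argument. The decay estimates, however, only resolve the fast-time-oscillation problem for $\eta_\veps=\curl V_\veps$; they do not supply what the proof needs at two earlier junctures: (a) replacing $\rho_\veps u_\veps\otimes u_\veps$ by $u_\veps\otimes u_\veps$ --- the algebraic identity $\div(w\otimes w)=\div w\,w+\frac{1}{2}\nabla|w|^2+\curl w\,w^\perp$ on which the compensated-compactness computation rests requires the \emph{same} field in both slots, so one cannot run it on $V_\veps\otimes u_\veps$ directly; and (b) exchanging $\omega_\veps=\curl u_\veps$, which is what that identity produces, for $\eta_\veps=\curl V_\veps$, which is the only quantity for which the heat equation yields compactness. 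Both steps require $r_\veps\,u_\veps\longrightarrow0$ in a space where the relevant products make sense, and with only the $O(1)$ bound on $r_\veps$ coming from the energy this fails: your proposal, as written, cannot even start the compensated-compactness algebra.

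The missing idea is precisely the paper's Proposition \ref{p:s-uniform}, which the paper flags as ``the essential reason'' for restricting to $d=2$: interpolate between the two bounds you already possess, namely $\big(r_\veps\big)_\veps\subset L^\infty_T(H^{-s_1})$ with $s_1=(2-\g)/\g<1$ (energy estimate plus dual Sobolev embedding) and $\big(\s_\veps\big)_\veps=\big(r_\veps/\veps\big)_\veps\subset L^\infty_T(H^{-s})$ for $s>3$ (wave system). This gives $\big(\veps^{-\k}\,r_\veps\big)_\veps\subset L^\infty_T(H^{-\wtilde s})$ for some $0<\k<1$ and $0<\wtilde s<1$; since $\wtilde s<1$, the product with $u_\veps\in L^2_T(H^1)$ \emph{is} well defined, so $r_\veps\,u_\veps=O(\veps^\k)$ in $L^2_T(H^{-\wtilde s-\de})$, and (a) and (b) become vanishing remainders (these are the paper's Lemma \ref{l:1_conv_0} and Corollary \ref{c:u-V_2D}). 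In other words, the bound on the whole $\s_\veps$ in two dimensions is used quantitatively, not merely to define the weak limit $\s$ as your write-up suggests. Two minor slips: the curl of the viscous term is $-\mu\Delta\curl u_\veps$, not $-\mu\Delta\curl V_\veps$ (harmless, since the term is linear and one only needs the weak limit of $u_\veps$); and over $\R^2$ the $L^2_T(L^2)$ bound on $u_\veps$ is not immediate from the energy inequality (no Poincar\'e inequality is available) --- it requires the weighted Gagliardo--Nirenberg argument of Paragraph \ref{sss:ub-0}.
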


We conclude this part by remarking that we are able to treat also the case when an external force act on the system at the same order of the Mach number, and forces the limit
density profile $\trho$ to be non-constant (but still horizontal, i.e. $\trho\,=\,\trho(x^h)$, when the space dimension is equal to $3$),
finding in this way a linear equation describing the target dynamics. However, we refrain to deal with that problem here,
since we lack of physically relevant applications:
\begin{itemize}
 \item the gravitational force would imply rather $\trho\,=\,\trho(x^3)$, which is not a convenient setting for us;
 \item the centrifugal force (see e.g. \cite{F-G-GV-N}) would scale as the Rossby number, rather than the Mach number, then there would be no balance with the pressure in order to recover
$\trho\neq1$;
\item combining the presence of a capillarity term (see \cite{F_MA}-\cite{F_JMFM}) with a non-monotone pressure law would be convenient; but dealing with this case requires the use of the
BD-entropy structure of the system, which seems not to be compatible with the choice of constant viscosity coefficients.
\end{itemize}

\section{Tools} \label{s:tools}
The goal of this section is twofold: we start by recalling some tools from Fourier and harmonic analysis, which will be broadly used throughout our study. Then, we will present
decay estimates for parabolic equations with fast diffusion, which will play a key role in the convergence and seem to be of independent interest.

\subsection{Elements of Fourier and harmonic analysis} \label{app:LP}

We recall here the main ideas of Littlewood-Paley theory, which we will exploit in our analysis.
We refer e.g. to Chapter 2 of \cite{B-C-D} for details.
For simplicity of exposition, let us deal with the $\R^d$ case; however, the whole construction can be adapted also to the $d$-dimensional torus $\T^d$.

\medbreak
First of all, let us introduce the so called \textit{Littlewood-Paley decomposition}, based on a non-homogeneous dyadic partition of unity with
respect to the Fourier variable. 
We fix a smooth radial function $\chi$ supported in the ball $B(0,2)$, equal to $1$ in a neighborhood of $B(0,1)$
and such that $r\mapsto\chi(r\,e)$ is nonincreasing over $\R_+$ for all unitary vectors $e\in\R^d$. Set
$\varphi\left(\xi\right)=\chi\left(\xi\right)-\chi\left(2\xi\right)$ and
$\vphi_j(\xi):=\vphi(2^{-j}\xi)$ for all $j\geq0$.

The dyadic blocks $(\Delta_j)_{j\in\Z}$ are defined by\footnote{Throughout we agree  that  $f(D)$ stands for 
the pseudo-differential operator $u\mapsto\mc{F}^{-1}(f\,\mc{F}u)$.} 
$$
\Delta_j\,:=\,0\quad\mbox{ if }\; j\leq-2,\qquad\Delta_{-1}\,:=\,\chi(D)\qquad\mbox{ and }\qquad
\Delta_j\,:=\,\varphi(2^{-j}D)\quad \mbox{ if }\;  j\geq0\,.
$$
We  also introduce the following low frequency cut-off operator:
\begin{equation} \label{eq:S_j}
S_ju\,:=\,\chi(2^{-j}D)\,=\,\sum_{k\leq j-1}\Delta_{k}\qquad\mbox{ for }\qquad j\geq0\,.
\end{equation}
It is well known that for any $u\in\mc{S}'$,  one has the equality 
$$u\,=\,\sum_{j\geq-1}\Delta_ju\qquad\mbox{ in }\quad \mc{S}'\,.$$
Sometimes, we shall rather use the homogeneous cut-offs $\dot\Delta_j$ and $\dot S_j$, which are  defined by
$$
\dot\Delta_j\,:=\,\varphi(2^{-j}D)\qquad\mbox{ and }\qquad \dot S_j\,=\,\chi(2^{-j}D)\  \mbox{ for all }\  j\in\Z\,.
$$
Notice  that  we have $u\,=\,\sum_{j\in\Z}\dot\Delta_ju$
up to  polynomials only, which makes the previous decomposition unwieldy. A way to overcome that problem
is to restrict oneself to elements $u$  of  the set $\mc S_h'$ of tempered distributions such that
$$\lim_{j\ra-\infty}\bigl\|\dot S_ju\bigr\|_{L^\infty}\,=\,0\,.$$

Let us also mention the so-called \emph{Bernstein inequalities}, which explain the way derivatives act on spectrally localized functions.
  \begin{lemma} \label{l:bern}
Let  $0<r<R$.   A constant $C$ exists so that, for any nonnegative integer $k$, any couple $(p,q)$ 
in $[1,+\infty]^2$, with  $p\leq q$,  and any function $u\in L^p$,  we  have, for all $\lambda>0$,
$$
\displaylines{
{\rm supp}\, \widehat u \subset   B(0,\lambda R)\quad
\Longrightarrow\quad
\|\nabla^k u\|_{L^q}\, \leq\,
 C^{k+1}\,\lambda^{k+d\left(\frac{1}{p}-\frac{1}{q}\right)}\,\|u\|_{L^p}\;;\cr
{\rm supp}\, \widehat u \subset \{\xi\in\R^d\,|\, r\lambda\leq|\xi|\leq R\lambda\}
\quad\Longrightarrow\quad C^{-k-1}\,\lambda^k\|u\|_{L^p}\,
\leq\,
\|\nabla^k u\|_{L^p}\,
\leq\,
C^{k+1} \, \lambda^k\|u\|_{L^p}\,.
}$$
\end{lemma}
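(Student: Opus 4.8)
The plan is to establish both inequalities by reducing to the normalized frequency scale $\lambda=1$ via a dilation, and then to represent the relevant derivatives (or the function itself) as a convolution against a fixed, rapidly decaying kernel, to which Young's convolution inequality applies. Concretely, given $u$ with $\what u$ supported in $B(0,\lambda R)$, I would set $v(x):=u(x/\lambda)$, so that $\what v(\xi)=\lambda^d\,\what u(\lambda\xi)$ is supported in $B(0,R)$; the homogeneities $\|v\|_{L^p}=\lambda^{d/p}\|u\|_{L^p}$ and $\|\nabla^k v\|_{L^q}=\lambda^{d/q-k}\|\nabla^k u\|_{L^q}$ then convert any scale-invariant estimate at $\lambda=1$ into the stated power of $\lambda$. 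The same dilation handles the annular case, the support $\{r\lambda\le|\xi|\le R\lambda\}$ becoming $\{r\le|\xi|\le R\}$.

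For the first inequality, I would fix once and for all a function $\phi\in\mc D(\R^d)$ with $\phi\equiv1$ on $B(0,R)$. Since $\what v=\phi\,\what v$, every derivative satisfies $\partial^\alpha v=\mc F^{-1}\big((i\xi)^\alpha\phi\big)*v=:h_\alpha*v$, and Young's inequality gives $\|\partial^\alpha v\|_{L^q}\le\|h_\alpha\|_{L^s}\|v\|_{L^p}$ with $1/s=1+1/q-1/p$. Here $s\ge1$ precisely because $p\le q$, which is where the hypothesis is used. Summing over the multi-indices $\alpha$ with $|\alpha|=k$ and undoing the dilation yields the claimed bound, provided $\|h_\alpha\|_{L^s}$ is controlled by $C^{k+1}$ --- the delicate point discussed below.

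For the second inequality, the upper bound is immediate from the first one, since an annulus is contained in a ball. For the lower bound I would exploit that $\what v$ is supported away from the origin: starting from the multinomial identity $|\xi|^{2k}=\sum_{|\alpha|=k}\binom{k}{\alpha}\xi^{2\alpha}$, I choose $\phi\in\mc D(\R^d)$ supported in the annulus and equal to $1$ there, and set $m_\alpha(\xi):=\binom{k}{\alpha}\,|\xi|^{-2k}\,\xi^\alpha\,\phi(\xi)$ (up to a harmless unimodular constant), which is smooth because $\phi$ vanishes near $\xi=0$. By construction $\sum_{|\alpha|=k}m_\alpha(\xi)\,(i\xi)^\alpha=\phi(\xi)=1$ on $\mathrm{supp}\,\what v$, whence $v=\sum_{|\alpha|=k}\mc F^{-1}(m_\alpha)*\partial^\alpha v$ and $\|v\|_{L^p}\le\big(\sum_{|\alpha|=k}\|\mc F^{-1}(m_\alpha)\|_{L^1}\big)\,\|\nabla^k v\|_{L^p}$. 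Since each $m_\alpha$ is homogeneous of degree $-k$, the dilation restores exactly the factor $\lambda^{-k}$, giving $\|u\|_{L^p}\le C^{k+1}\lambda^{-k}\|\nabla^k u\|_{L^p}$.

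The main obstacle is the uniform control of the kernel norms $\|h_\alpha\|_{L^s}$ and $\|\mc F^{-1}(m_\alpha)\|_{L^1}$ by a constant of the form $C^{k+1}$, with $C$ depending only on $d$, $r$, $R$ and the fixed cutoffs but \emph{not} on $k$, $p$, $q$ or $\lambda$. I would obtain this through the standard decay-by-integration-by-parts mechanism: for any integer $N$ one has $(1+|x|^2)^N\,\mc F^{-1}(g)(x)=\mc F^{-1}\big((1-\Delta_\xi)^N g\big)(x)$, so an $L^\infty_x$ bound with integrable tail for $\mc F^{-1}(g)$ follows from $L^1_\xi$ bounds on $(1-\Delta_\xi)^N g$; applied to $g=(i\xi)^\alpha\phi$ and $g=m_\alpha$, and using that all $\xi$-factors live on a fixed compact set while the number of terms produced by differentiating the polynomial factors grows only geometrically in $k$, this yields the exponential-in-$k$ constant. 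Once the kernel bounds are in hand, the two estimates follow by assembling the pieces above; nothing beyond Young's inequality and the dilation is needed.
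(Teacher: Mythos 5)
Your proof is correct and follows essentially the canonical argument: the paper gives no proof of this lemma, deferring to Chapter~2 of \cite{B-C-D}, and your dilation-plus-Young's-inequality scheme, with the multinomial identity $|\xi|^{2k}=\sum_{|\alpha|=k}\binom{k}{\alpha}\xi^{2\alpha}$ for the reverse bound and the $(1-\Delta_\xi)^N$ integration-by-parts mechanism for the exponential-in-$k$ kernel bounds, is precisely that reference's proof. (Only a cosmetic slip: $m_\alpha$ itself is not homogeneous because of the cutoff $\phi$ --- the factor $\lambda^{-k}$ really comes from the norm scalings $\|v\|_{L^p}=\lambda^{d/p}\|u\|_{L^p}$ and $\|\nabla^k v\|_{L^p}=\lambda^{d/p-k}\|\nabla^k u\|_{L^p}$, which you already have in hand.)
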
   

By use of Littlewood-Paley decomposition, we can now define the class of Besov spaces.
\begin{defin} \label{d:B}
Let $s\in\R$ and $1\leq p,r\leq+\infty$.
\begin{enumerate}[(i)]
\item The \emph{non-homogeneous Besov space}
$B^{s}_{p,r}$ is the  set of tempered distributions $u$ for which
$$
\|u\|_{B^{s}_{p,r}}\,:=\,
\left\|\left(2^{js}\,\|\Delta_ju\|_{L^p}\right)_{j\geq-1}\right\|_{\ell^r}\,<\,+\infty\,.
$$
\item The \emph{homogeneous Besov space} $\dot B^s_{p,r}$ is the subset of distributions $u$ in $\mc S'_h$ such that
$$\|u\|_{\dot B^{s}_{p,r}}\,:=\,\left\|\left(2^{js}\,\|\dot\Delta_ju\|_{L^p}\right)_{j\in\Z}\right\|_{\ell^r}\,<\,+\infty\,.$$
\end{enumerate}
\end{defin}

It is well known that, for all $s\in\R$, $B^s_{2,2}$ coincides with $H^s$, with equivalent norms:
\begin{equation} \label{eq:LP-Sob}
\|f\|_{H^s}\,\sim\,\left(\sum_{j\geq-1}2^{2 j s}\,\|\Delta_jf\|^2_{L^2}\right)^{1/2}\,.
\end{equation}
An analogous property holds also for the homogeneous spaces $\dot H^s$.
When $p\neq 2$, nonhomogeneous (resp. homogeneous) Besov spaces are interpolation spaces between Sobolev spaces $W^{k,p}$ (resp. $\dot W^{k,p}$):
for all $p\in\,]1,+\infty[\,$, one has the following continuous embeddings:
$$\dot B^0_{p,\min(p,2)}\,\hookrightarrow\, L^p\,\hookrightarrow\, \dot B^0_{p,\max(p,2)}
\qquad\mbox{ and }\qquad B^0_{p,\min(p,2)}\,\hookrightarrow\, L^p\,\hookrightarrow\, B^0_{p,\max(p,2)}\,.$$

As an immediate consequence of the Bernstein inequalities, one gets the following Sobolev-type embedding result.
\begin{prop}\label{p:embed}
Let $1\leq p_1\leq p_2\leq+\infty.$ 
The, the space $B^{s_1}_{p_1,r_1}$ is continuously embedded in the space $B^{s_2}_{p_2,r_2}$ whenever
$$
s_2\,<\,s_1-d\left(\frac{1}{p_1}-\frac{1}{p_2}\right)\qquad\mbox{ or }\qquad
s_2\,=\,s_1-d\left(\frac{1}{p_1}-\frac{1}{p_2}\right)\;\;\mbox{ and }\;\;r_1\,\leq\,r_2\,. 
$$
The space $\dot B^{s_1}_{p_1,r_1}$ is continuously embedded in the space $\dot B^{s_2}_{p_2,r_2}$ if
$$
s_2\,=\,s_1-d\left(\frac{1}{p_1}-\frac{1}{p_2}\right)\;\;\mbox{ and }\;\;r_1\,\leq\,r_2\,. 
$$
\end{prop}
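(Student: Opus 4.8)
The plan is to reduce both embeddings to a single frequency-localised estimate furnished by the first (ball-type) Bernstein inequality of Lemma~\ref{l:bern}, and then to sum the resulting bound over the dyadic scales. For every $j$ the block $\Delta_j u$ (respectively $\dot\Delta_j u$) has Fourier support contained in a ball of radius $\sim 2^j$, so applying that inequality with $k=0$, $p=p_1$, $q=p_2$ and $\lambda=2^j$ gives
$$
\|\Delta_j u\|_{L^{p_2}}\,\leq\,C\,2^{jd\left(\frac{1}{p_1}-\frac{1}{p_2}\right)}\,\|\Delta_j u\|_{L^{p_1}}\,,
$$
and the same with $\dot\Delta_j$ in place of $\Delta_j$, the constant $C$ being independent of $j$. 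Multiplying by $2^{js_2}$ and introducing $\de:=s_1-d\left(\frac{1}{p_1}-\frac{1}{p_2}\right)-s_2\geq0$, this becomes
$$
2^{js_2}\,\|\Delta_j u\|_{L^{p_2}}\,\leq\,C\,2^{-j\de}\,2^{js_1}\,\|\Delta_j u\|_{L^{p_1}}\,.
$$
Everything then hinges on controlling the $\ell^{r_2}$-norm of the left-hand side by the $\ell^{r_1}$-norm of $\big(2^{js_1}\|\Delta_j u\|_{L^{p_1}}\big)_j$.

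For the non-homogeneous space, where $j\geq-1$, I would split into the two cases of the statement. When $\de=0$ (the endpoint), the displayed inequality reads $2^{js_2}\|\Delta_j u\|_{L^{p_2}}\leq C\,2^{js_1}\|\Delta_j u\|_{L^{p_1}}$; taking $\ell^{r_2}$-norms and invoking the nesting $\ell^{r_1}\hra\ell^{r_2}$, valid precisely because $r_1\leq r_2$, yields $\|u\|_{B^{s_2}_{p_2,r_2}}\leq C\,\|u\|_{B^{s_1}_{p_1,r_1}}$. When $\de>0$, I would instead bound the single sequence $\big(2^{js_1}\|\Delta_j u\|_{L^{p_1}}\big)_j$ in $\ell^\infty$ by $\|u\|_{B^{s_1}_{p_1,r_1}}$ (using $\ell^{r_1}\hra\ell^\infty$), so that
$$
2^{js_2}\,\|\Delta_j u\|_{L^{p_2}}\,\leq\,C\,2^{-j\de}\,\|u\|_{B^{s_1}_{p_1,r_1}}\,;
$$
since $\de>0$ and the index $j\geq-1$ is bounded below, the sequence $\big(2^{-j\de}\big)_{j\geq-1}$ lies in $\ell^{r_2}$ for every $r_2\in[1,+\infty]$, and taking $\ell^{r_2}$-norms closes the estimate with no restriction on $r_1,r_2$.

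For the homogeneous space the per-block inequality is identical, but now $j$ ranges over all of $\Z$. Here the device used for $\de>0$ fails outright: as $j\to-\infty$ the factor $2^{-j\de}$ blows up, so $\big(2^{-j\de}\big)_{j\in\Z}\notin\ell^{r_2}$ and the low-frequency tail cannot be summed. This is exactly why only the endpoint $\de=0$ survives in the homogeneous setting; there the argument of the first case goes through verbatim, with $\ell^{r_1}(\Z)\hra\ell^{r_2}(\Z)$ again requiring $r_1\leq r_2$, and the membership $u\in\mc S'_h$ guaranteeing that $u=\sum_{j\in\Z}\dot\Delta_j u$ genuinely reconstructs $u$, not merely modulo polynomials. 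The main, and indeed only, subtle point is thus this asymmetry between the two families of spaces: the contribution of low frequencies is harmless for $B^{s}_{p,r}$ because the index set is bounded below, but it obstructs the scaling-subcritical case for $\dot B^{s}_{p,r}$, forcing the endpoint relation $s_2=s_1-d\left(\frac{1}{p_1}-\frac{1}{p_2}\right)$.
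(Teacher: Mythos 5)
Your proof is correct and takes exactly the route the paper intends: Proposition \ref{p:embed} is presented there as an ``immediate consequence of the Bernstein inequalities'' of Lemma \ref{l:bern}, i.e.\ precisely your per-block estimate $\|\Delta_j u\|_{L^{p_2}}\leq C\,2^{jd\left(\frac{1}{p_1}-\frac{1}{p_2}\right)}\|\Delta_j u\|_{L^{p_1}}$ followed by $\ell^{r}$ summation, with the $\ell^{r_1}\hookrightarrow\ell^{r_2}$ nesting at the endpoint and the geometric factor $2^{-j\delta}$ in the subcritical case. Your observation that the unboundedness of $\big(2^{-j\delta}\big)_{j\in\Z}$ as $j\to-\infty$ is what restricts the homogeneous embedding to the endpoint relation is also accurate and correctly identifies the only genuinely delicate point.
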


We recall also Lemma 2.73 of \cite{B-C-D}.
\begin{lemma} \label{l:Id-S}
If $1\leq r<+\infty$, for any $f\in B^s_{p,r}$ one has
$$
\lim_{j\ra+\infty}\left\|f\,-\,S_jf\right\|_{B^s_{p,r}}\,=\,0\,.
$$
\end{lemma}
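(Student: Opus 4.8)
The plan is to reduce the statement to the elementary fact that the tail of a convergent series tends to zero, exploiting the spectral localisation of the Littlewood-Paley blocks. First I would use the identity $S_jf=\sum_{k\leq j-1}\Delta_kf$ together with the decomposition $f=\sum_{k\geq-1}\Delta_kf$, valid in $\mc S'$, to write (for $j\geq0$)
$$
f-S_jf\,=\,\sum_{k\geq j}\Delta_kf\,.
$$
Setting $c_\ell:=2^{\ell s}\,\|\Delta_\ell f\|_{L^p}$ for $\ell\geq-1$, the hypothesis $f\in B^s_{p,r}$ means precisely, by Definition \ref{d:B}, that $(c_\ell)_{\ell\geq-1}\in\ell^r$, i.e. $\sum_{\ell\geq-1}c_\ell^{\,r}=\|f\|^r_{B^s_{p,r}}<+\infty$. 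Everything will hinge on this series being summable, which is where $r<+\infty$ enters.

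The key structural input is the quasi-orthogonality of the dyadic blocks: since $\widehat{\Delta_kf}$ is supported in an annulus of size $\sim2^k$, one has $\Delta_\ell\Delta_k=0$ whenever $|\ell-k|\geq2$. Applying $\Delta_\ell$ to the identity above, I would deduce that $\Delta_\ell(f-S_jf)=0$ for every $\ell\leq j-2$, while for $\ell\geq j-1$ only the indices $k\in\{\ell-1,\ell,\ell+1\}$ with $k\geq j$ contribute. Using in addition that each $\Delta_\ell$ is bounded on $L^p$ uniformly in $\ell$ (a consequence of Lemma \ref{l:bern}), this gives
$$
\|\Delta_\ell(f-S_jf)\|_{L^p}\,\leq\,C\!\!\sum_{\substack{|k-\ell|\leq1\\ k\geq j}}\|\Delta_kf\|_{L^p}\,,\qquad\ell\geq j-1\,,
$$
and $\Delta_\ell(f-S_jf)=0$ otherwise.

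Inserting this bound into the definition of the norm and absorbing the finitely many neighbouring indices (a harmless shift of the summation index, together with the triangle inequality in $\ell^r$), I would arrive at
$$
\|f-S_jf\|^r_{B^s_{p,r}}\,=\,\sum_{\ell\geq j-1}2^{\ell s r}\,\|\Delta_\ell(f-S_jf)\|^r_{L^p}\,\leq\,C'\sum_{\ell\geq j-2}c_\ell^{\,r}\,.
$$
The right-hand side is the tail of the convergent series $\sum_\ell c_\ell^{\,r}$, hence it tends to $0$ as $j\ra+\infty$. I expect no genuine obstacle here: the only delicate points are the bookkeeping of the boundary indices $\ell\in\{j-1,j\}$ and the uniform $L^p$-boundedness of the blocks, both entirely routine. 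It is nonetheless worth underlining where the hypothesis is used, since the result fails for $r=+\infty$: taking $f$ with $\|\Delta_\ell f\|_{L^p}\sim2^{-\ell s}$ yields $c_\ell\sim1$, so $f\in B^s_{p,\infty}$ while $\sup_{\ell\geq j}c_\ell\not\ra0$, which is exactly why one must require $1\leq r<+\infty$.
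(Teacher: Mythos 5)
Your argument is correct, and there is nothing to compare it against inside the paper: the statement is quoted verbatim as Lemma 2.73 of \cite{B-C-D}, with no proof given, and your proof is precisely the standard one from that reference --- write $f-S_jf=\sum_{k\geq j}\Delta_kf$, use the quasi-orthogonality $\Delta_\ell\Delta_k=0$ for $|\ell-k|\geq2$, and conclude by dominating $\|f-S_jf\|_{B^s_{p,r}}^r$ by a tail of the convergent series $\sum_\ell 2^{\ell s r}\|\Delta_\ell f\|_{L^p}^r$, which is exactly where $r<+\infty$ is needed. Two cosmetic remarks: the uniform $L^p$-boundedness of the blocks $\Delta_\ell$ is not really a consequence of the Bernstein inequalities (Lemma \ref{l:bern}), but of Young's inequality, since $\Delta_\ell$ is convolution with a kernel of $L^1$-norm independent of $\ell$; and you could shorten the bookkeeping by noting that $\Delta_\ell(f-S_jf)=\Delta_\ell f$ exactly for $\ell\geq j+2$, so that only the two boundary indices $\ell\in\{j,j+1\}$ (where $\Delta_\ell S_jf$ need not vanish) require the triangle inequality --- this also makes your counterexample for $r=+\infty$ immediate. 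Neither point affects the validity of the proof.
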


Before going on, let us introduce also the so-called \emph{Chemin-Lerner spaces} (defined first in \cite{Ch-Ler}). They are time-dependent Besov spaces, where the time integration
is performed before the $\ell^r$ summation. See also Paragraph 2.6.3 of \cite{B-C-D} for more details.
\begin{defin} \label{d:C-L}
Let $s\in\R$, the triplet $(q,p,r)\in[1,+\infty]^3$ and $T\in[0,+\infty]$. The space $\wtilde{L}^q_T(B^s_{p,r})$ is defined as the set of tempered distributions
$u\in \mc S'\big([0,T[\,\times\R^d\big)$ such that
$$
\|u\|_{\wtilde L^q_T(B^s_{p,r})}\,:=\,\Bigl\| \Bigl(  2^{js}  \|\Delta_j u(t)\|_{L^q_T(L^p)} \Bigr)_{j\geq -1}\Bigr\|_{\ell^r}\,<\,+\infty\,.
$$
We also set $\wtilde C_T(B^s_{p,r})=\wtilde L_T^\infty(B^s_{p,r})\cap C\big([0,T];B^s_{p,r}\big)$.
\end{defin}
The relation between these classes and the classical $L^q_T(B^s_{p,r})$ can be easily recovered by Minkowski's inequality:
$$
\left\{\begin{array}{lcl}
        \|u\|_{\wtilde{L}^q_T(B^s_{p,r})}\;\leq\;\|u\|_{L^q_T(B^s_{p,r})} & \mbox{ if } & q\,\leq\,r \\[1ex]
\|u\|_{\wtilde{L}^q_T(B^s_{p,r})}\;\geq\;\|u\|_{L^q_T(B^s_{p,r})} & \mbox{ if } & q\,\geq\,r\,.
       \end{array}\right.
$$
We will need those spaces in Paragraph \ref{sss:sigma-reg}.

\medbreak
Let us now introduce the paraproduct operator (after J.-M. Bony, see \cite{Bony}). Constructing the paraproduct operator relies on the observation that, 
formally, any product  of two tempered distributions $u$ and $v$ may be decomposed into 
\begin{equation}\label{eq:bony}
u\,v\;=\;T_uv\,+\,T_vu\,+\,R(u,v)\,,
\end{equation}
where we have defined
$$
T_uv\,:=\,\sum_jS_{j-1}u\Delta_j v,\qquad\qquad\mbox{ and }\qquad\qquad
R(u,v)\,:=\,\sum_j\sum_{|j'-j|\leq1}\Delta_j u\,\Delta_{j'}v\,.
$$
The above operator $T$ is called ``paraproduct'' whereas
$R$ is called ``remainder''.
The paraproduct and remainder operators have many nice continuity properties. 
The following ones are of constant use in this paper (see the proof in e.g. Chapter 2 of \cite{B-C-D}).
\begin{prop}\label{p:op}
For any $(s,p,r)\in\R\times[1,+\infty]^2$, $(r_1,r_2)\in[1,+\infty]^2$ and $t>0$, the paraproduct operator 
$T$ maps continuously $L^\infty\times B^s_{p,r}$ in $B^s_{p,r}$ and  $B^{-t}_{\infty,r_1}\times B^s_{p,r_2}$ in $B^{s-t}_{p,r_3}$, where $1/r_3\,:=\,\min\big\{1,1/r_1+1/r_2\big\}$.
Moreover, the following estimates hold:
$$
\|T_uv\|_{B^s_{p,r}}\,\leq\, C\,\|u\|_{L^\infty}\,\|\nabla v\|_{B^{s-1}_{p,r}}\qquad\mbox{ and }\qquad
\|T_uv\|_{B^{s-t}_{p,r_3}}\,\leq\, C\|u\|_{B^{-t}_{\infty,r_1}}\,\|\nabla v\|_{B^{s-1}_{p,r_2}}\,.
$$
For any $(s_1,p_1,r_1)$ and $(s_2,p_2,r_2)$ in $\R\times[1,+\infty]^2$ such that 
$s_1+s_2>0$, $1/p:=1/p_1+1/p_2\leq1$ and~$1/r:=1/r_1+1/r_2\leq1$,
the remainder operator $R$ maps continuously~$B^{s_1}_{p_1,r_1}\times B^{s_2}_{p_2,r_2}$ into~$B^{s_1+s_2}_{p,r}$.
In the case $s_1+s_2=0$, provided $r=1$, operator $R$ is continuous from $B^{s_1}_{p_1,r_1}\times B^{s_2}_{p_2,r_2}$ with values
in $B^{0}_{p,\infty}$.
\end{prop}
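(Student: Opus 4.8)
The statement collects the classical continuity properties of Bony's paraproduct and remainder, so the plan is to run the standard Littlewood--Paley almost-orthogonality argument, the whole proof resting on one structural observation together with repeated use of the Bernstein inequalities of Lemma \ref{l:bern} and of Young's inequality for discrete convolutions. The structural observation I would isolate first is the localization of the Fourier supports of the building blocks. Since $S_{j-1}u$ is spectrally supported in a ball $\{|\xi|\leq C2^j\}$ and $\Delta_j v$ in an annulus $\{c2^j\leq|\xi|\leq C2^j\}$, the product $S_{j-1}u\,\Delta_j v$ is supported in a dyadic annulus $\{c'2^j\leq|\xi|\leq C'2^j\}$; hence there is a fixed integer $N_0$ with $\Delta_k(S_{j-1}u\,\Delta_j v)=0$ whenever $|k-j|>N_0$. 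By contrast a remainder block $\Delta_j u\,\Delta_{j'}v$ with $|j'-j|\leq1$ is only supported in a ball $\{|\xi|\leq C2^j\}$, so $\Delta_k$ of it may be nonzero for every $k\leq j+N_0$. This annulus-versus-ball dichotomy is the single fact driving the different hypotheses on the two operators.

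For the paraproduct I would bound $\|\Delta_k T_uv\|_{L^p}\leq C\sum_{|j-k|\leq N_0}\|S_{j-1}u\|_{L^\infty}\|\Delta_j v\|_{L^p}$, using H\"older at fixed frequency and the annulus property. In the case $u\in L^\infty$ one has $\|S_{j-1}u\|_{L^\infty}\leq\|u\|_{L^\infty}$ trivially; multiplying by $2^{ks}$, inserting the Bernstein equivalence $\|\Delta_j v\|_{L^p}\sim 2^{-j}\|\Delta_j\nabla v\|_{L^p}$ (the low-frequency block being handled separately), and taking the $\ell^r$ norm over $k$ — the sum running over the finite band $|j-k|\leq N_0$, so this costs only a fixed constant — yields the first estimate. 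In the case $u\in B^{-t}_{\infty,r_1}$ the only extra point is the bound $\|S_{j-1}u\|_{L^\infty}\leq\sum_{k\leq j-2}\|\Delta_k u\|_{L^\infty}$: writing $\|\Delta_k u\|_{L^\infty}=2^{kt}\big(2^{-kt}\|\Delta_k u\|_{L^\infty}\big)$ and using $t>0$ to sum the geometric weights $2^{kt}$, I would show that $\big(2^{-jt}\|S_{j-1}u\|_{L^\infty}\big)_j$ lies in $\ell^{r_1}$ with norm controlled by $\|u\|_{B^{-t}_{\infty,r_1}}$. The resulting product of this $\ell^{r_1}$ sequence with the $\ell^{r_2}$ sequence $\big(2^{js}\|\Delta_j v\|_{L^p}\big)_j$, summed over the finite band, lands in $\ell^{r_3}$ with $1/r_3=\min\{1,1/r_1+1/r_2\}$ by Young's inequality, which is exactly the claimed mapping.

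For the remainder I would group $R(u,v)=\sum_j R_j$ with $R_j:=\sum_{|j'-j|\leq1}\Delta_j u\,\Delta_{j'}v$, each $R_j$ supported in a ball of radius $C2^j$, so that $\Delta_k R(u,v)=\sum_{j\geq k-N_0}\Delta_k R_j$. H\"older with $1/p=1/p_1+1/p_2$ gives $\|R_j\|_{L^p}\leq C\,\|\Delta_j u\|_{L^{p_1}}\sum_{|j'-j|\leq1}\|\Delta_{j'}v\|_{L^{p_2}}$, whence $\big(2^{j(s_1+s_2)}\|R_j\|_{L^p}\big)_j$ is the product of an $\ell^{r_1}$ and an $\ell^{r_2}$ sequence, hence (since $1/r\leq1$) an $\ell^r$ sequence of norm at most $\|u\|_{B^{s_1}_{p_1,r_1}}\|v\|_{B^{s_2}_{p_2,r_2}}$. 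Then $2^{k(s_1+s_2)}\|\Delta_k R(u,v)\|_{L^p}\leq C\sum_{j\geq k-N_0}2^{(k-j)(s_1+s_2)}\big(2^{j(s_1+s_2)}\|R_j\|_{L^p}\big)$ is a discrete convolution of that $\ell^r$ sequence against the kernel $\big(2^{-n(s_1+s_2)}\big)_{n\geq-N_0}$, which is summable precisely because $s_1+s_2>0$; Young's inequality then gives the $B^{s_1+s_2}_{p,r}$ bound. For the endpoint $s_1+s_2=0$ that kernel is no longer summable, but taking instead the $\ell^\infty$, i.e.\ $B^0_{p,\infty}$, norm and bounding $\sup_k\sum_{j\geq k-N_0}\|R_j\|_{L^p}\leq\sum_j\|R_j\|_{L^p}$, the right-hand side is finite exactly when $r=1$, since then $1/r_1+1/r_2=1$ and H\"older makes the product sequence summable; this gives the last assertion.

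The argument is essentially mechanical once the annulus/ball localization is in place, so I do not expect a genuine obstacle; the only delicate point is the $\ell^r$-index bookkeeping — tracking how Young's inequality produces $1/r_3=\min\{1,1/r_1+1/r_2\}$ in the second paraproduct bound, and recognizing that the positivity $s_1+s_2>0$ (respectively the endpoint choice $r=1$ when $s_1+s_2=0$) is exactly the threshold for summability of the convolution kernel in the remainder. No compactness or functional-analytic input beyond the Bernstein and Young inequalities is needed.
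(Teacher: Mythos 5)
Your proof is correct, and it matches the paper's approach in the only sense available: the paper does not prove Proposition \ref{p:op} at all but refers to Chapter 2 of \cite{B-C-D}, and your argument is precisely the standard one given there (annulus-versus-ball spectral localization, Bernstein inequalities, and summation of dyadic series, with the reverse Bernstein inequality legitimately applicable to every paraproduct block since $T_uv$ contains no $\Delta_{-1}v$ term). One cosmetic remark: the step producing $1/r_3=\min\{1,1/r_1+1/r_2\}$ is H\"older's inequality for sequences combined with the embedding $\ell^q\hookrightarrow\ell^1$ for $q\leq1$, rather than Young's inequality -- the finite-band summation over $|j-k|\leq N_0$ being the only genuine (and trivial) convolution in that estimate.
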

It goes without saying that similar properties hold true also in the class of homogeneous Besov spaces.
As a corollary of the previous proposition, we deduce some continuity properties of the product in Sobolev spaces, which will be used in the course of our analysis.
In the case $d=3$, we get the next statement.
\begin{coroll} \label{c:product}
Let $d=3$.
\begin{enumerate}[(i)]
 \item For all $0<\sigma<3/2$ and all $0<\g<\sigma$, the product maps continuously $H^{-\g}\times H^\sigma$ into the space $H^{\sigma-\g-3/2}$.
 \item The product is a continuous map from $H^1\times H^1$ into $H^{1/2}$.
\end{enumerate}
\end{coroll}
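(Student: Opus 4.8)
The plan is to deduce both claims from Bony's decomposition \eqref{eq:bony}, combined with the continuity properties of paraproduct and remainder collected in Proposition \ref{p:op} and the Sobolev-type embeddings of Proposition \ref{p:embed}, throughout identifying $H^s$ with $B^s_{2,2}$ via \eqref{eq:LP-Sob}. Writing $uv=T_uv+T_vu+R(u,v)$, I would bound the three pieces separately and verify that each lands in the target space; the argument is essentially pure index bookkeeping, so the whole difficulty lies in checking that the constraints of Proposition \ref{p:op} are met.

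For claim (i), take $u\in H^{-\g}=B^{-\g}_{2,2}$ and $v\in H^\sigma=B^\sigma_{2,2}$. For the paraproduct $T_uv$ I would first use the embedding $B^{-\g}_{2,2}\hookrightarrow B^{-\g-3/2}_{\infty,2}$ (the endpoint case of Proposition \ref{p:embed} in dimension $d=3$, where $d(1/2-0)=3/2$); then Proposition \ref{p:op} with $t=\g+3/2>0$ yields $T_uv\in B^{\sigma-\g-3/2}_{2,1}\hookrightarrow H^{\sigma-\g-3/2}$, the last embedding being the elementary $B^s_{2,1}\hookrightarrow B^s_{2,2}$. For the symmetric paraproduct $T_vu$ the key is the hypothesis $\sigma<3/2$: it makes $v\in B^\sigma_{2,2}\hookrightarrow B^{\sigma-3/2}_{\infty,2}$ an $L^\infty$-based space of negative index $-(3/2-\sigma)$, so that Proposition \ref{p:op} applied to $u\in B^{-\g}_{2,2}$ gives $T_vu\in B^{-\g-(3/2-\sigma)}_{2,1}=B^{\sigma-\g-3/2}_{2,1}\hookrightarrow H^{\sigma-\g-3/2}$. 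Finally, for the remainder I would invoke the bilinear estimate of Proposition \ref{p:op} with $s_1=-\g$, $s_2=\sigma$: here $s_1+s_2=\sigma-\g>0$ precisely because $\g<\sigma$, and with $1/p=1/r=1$ one obtains $R(u,v)\in B^{\sigma-\g}_{1,1}\hookrightarrow B^{\sigma-\g-3/2}_{2,2}=H^{\sigma-\g-3/2}$, the embedding being again the endpoint case of Proposition \ref{p:embed} (now with $d(1-1/2)=3/2$). Collecting the three bounds proves (i).

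Claim (ii) follows from exactly the same scheme with $u,v\in H^1=B^1_{2,2}$. For the two paraproducts I would use $B^1_{2,2}\hookrightarrow B^{-1/2}_{\infty,2}$ (so $t=1/2$), which gives $T_uv,\,T_vu\in B^{1/2}_{2,1}\hookrightarrow H^{1/2}$; for the remainder, $s_1+s_2=2>0$ produces $R(u,v)\in B^{2}_{1,1}\hookrightarrow B^{1/2}_{2,2}=H^{1/2}$. Note that (ii) is a genuine borderline case not contained in (i), since (i) requires one strictly negative Sobolev index, but the paraproduct method handles it uniformly.

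The only real points of care — hence what I would regard as the ``obstacle'' in an otherwise mechanical proof — are the two structural constraints: the positivity $s_1+s_2>0$ needed for the remainder to be continuous, which is exactly where the assumption $\g<\sigma$ (resp. $1+1>0$) enters, and the restriction $\sigma<3/2$, which is what allows $v$ to be placed in a negative-index $L^\infty$-based Besov space so that the non-symmetric paraproduct $T_vu$ can be estimated. One must also keep track of the third summation index so that each output sits in $B^\cdot_{2,1}$ (resp. $B^\cdot_{1,1}$), which then embeds into the desired $H^\cdot=B^\cdot_{2,2}$.
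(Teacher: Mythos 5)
Your proof is correct and takes essentially the same route as the paper's own: both rest on Bony's decomposition \eqref{eq:bony}, the paraproduct and remainder continuity of Proposition \ref{p:op}, the embedding $H^s\hookrightarrow B^{s-3/2}_{\infty,\cdot}$ (the paper uses third index $\infty$, you use $2$ --- an immaterial difference), and the embedding $B^{\sigma-\gamma}_{1,1}\hookrightarrow H^{\sigma-\gamma-3/2}$ from Proposition \ref{p:embed}. Your explicit index bookkeeping, in particular the role of $\gamma<\sigma$ for the remainder and of $\sigma<3/2$ for placing $v$ in a negative-index $L^\infty$-based space to treat $T_vu$, is exactly what the paper's (more tersely written) proof relies on.
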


When $d=2$, instead, we get the following result.
\begin{coroll} \label{c:product-2}
Let $d=2$.
\begin{enumerate}[(i)]
 \item For all $\eta$ and all $\delta$ in $\,]0,1[\,$, such that $1-\eta-\delta>0$, the product is a continuous map from $H^{-\eta}\times H^{1-\delta}$ into $H^{-\eta-\delta}$.
 \item For all $-1<\eta<1$, the product is a continuous map from $H^{\eta}\times H^{1}$ into $H^{\eta-\delta}$ for all $\delta>0$ arbitrarily small.
\item For all $-2<\eta<2$, the product is a continuous map from $H^{\eta}\times H^{2}$ into $H^{\eta}$.
\item The product is a continuous map from $H^{1}\times H^{1}$ into $H^{1-\delta}$ for all $\delta>0$ arbitrarily small.
\end{enumerate}
\end{coroll}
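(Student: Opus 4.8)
The plan is to prove all four items by means of Bony's decomposition \eqref{eq:bony}, $uv=T_uv+T_vu+R(u,v)$, estimating each of the three pieces through the continuity properties collected in Proposition~\ref{p:op} and then passing back and forth between the Sobolev and Besov scales via the identification $H^s=B^s_{2,2}$ (recall \eqref{eq:LP-Sob}) and the embeddings of Proposition~\ref{p:embed}. The two-dimensional inputs I will use repeatedly are the embeddings $H^s=B^s_{2,2}\hookrightarrow B^{s-1}_{\infty,2}$ and, for any $\theta>0$, $H^s\hookrightarrow B^{s-1-\theta}_{\infty,1}$ (both are the case $d(1/2-0)=1$ in Proposition~\ref{p:embed}), together with $B^{s}_{1,1}\hookrightarrow B^{s-1}_{2,2}=H^{s-1}$, which converts the output of the remainder estimate back to the Sobolev scale, and $\ell^1\subset\ell^2$, i.e. $B^s_{2,1}\hookrightarrow B^s_{2,2}=H^s$.

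For item (i), with $u\in H^{-\eta}$ and $v\in H^{1-\delta}$, I embed $u\hookrightarrow B^{-\eta-1}_{\infty,2}$ and apply the negative paraproduct bound with order $t=\eta+1>0$ to get $T_uv\in B^{-\eta-\delta}_{2,1}$; symmetrically, embedding $v\hookrightarrow B^{-\delta}_{\infty,2}$ gives $T_vu\in B^{-\eta-\delta}_{2,1}$, and both land in $H^{-\eta-\delta}$. For the remainder the relevant exponent sum is $(-\eta)+(1-\delta)=1-\eta-\delta>0$, which is \emph{exactly} the stated hypothesis, so $R(u,v)\in B^{1-\eta-\delta}_{1,1}\hookrightarrow H^{-\eta-\delta}$. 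Item (iii) follows the same template, now exploiting that $v\in H^2\hookrightarrow B^1_{\infty,2}\hookrightarrow L^\infty$ in dimension two: the piece $T_vu$ is then controlled by the first (the $L^\infty$) estimate of Proposition~\ref{p:op} with no loss, $T_uv$ by the negative paraproduct estimate when $\eta\le 1$ (landing in $B^{1+\eta}_{2,1}\hookrightarrow H^{1+\eta}\hookrightarrow H^\eta$) and by the $L^\infty\times B^s$ estimate when $\eta>1$ (landing in $H^2\hookrightarrow H^\eta$, which needs $\eta\le2$), while the remainder requires $\eta+2>0$. This reproduces the lower bound $\eta>-2$ (from $R$) and the upper bound $\eta<2$ (from $T_uv$ and the target embedding).

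The one place where genuine care is needed, and the source of the arbitrarily small loss $\delta$ in items (ii) and (iv), is the failure of $H^1\hookrightarrow L^\infty$ in dimension $d=2$: a factor belonging only to $H^1$ cannot be placed in $L^\infty$, and the best available substitute is the strict embedding $H^1\hookrightarrow B^{-\delta}_{\infty,2}$, which costs exactly $\delta$ derivatives in the corresponding paraproduct. Concretely, for item (ii) with $-1<\eta<1$ I obtain $T_uv\in B^{\eta}_{2,1}\hookrightarrow H^\eta$ (using order $t=1-\eta>0$) and $R(u,v)\in B^{\eta+1}_{1,1}\hookrightarrow H^\eta$ (using $\eta+1>0$) with no loss, whereas $T_vu$ only yields $B^{\eta-\delta}_{2,1}\hookrightarrow H^{\eta-\delta}$; item (iv) is precisely the endpoint $\eta=1$ of this computation, now symmetric in the two $H^1$ factors, so that both paraproducts force the loss and one lands in $H^{1-\delta}$. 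Apart from this structural obstruction the argument is pure index bookkeeping: the only conditions to verify are that each remainder exponent sum be strictly positive (yielding the lower regularity bounds) and that each paraproduct order $t$ be positive (yielding the upper bounds $\eta<1$ and $\eta<2$), all of which match the hypotheses of the respective items.
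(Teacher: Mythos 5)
Your proof is correct and follows essentially the same route as the paper: although the paper defers the proof of this corollary to \cite{F-G_RMI}, it proves the $3$-D analogue (Corollary \ref{c:product}) by exactly your scheme --- Bony's decomposition, the paraproduct and remainder bounds of Proposition \ref{p:op}, and the Besov--Sobolev embeddings of Proposition \ref{p:embed} --- and your $2$-D index bookkeeping, including the identification of the failure of $H^1\hookrightarrow L^\infty$ as the structural source of the $\delta$-loss in items (ii) and (iv), is accurate. The only micro-slip is the boundary case $\eta=1$ in item (iii), where your paraproduct order $t=1-\eta$ vanishes and Proposition \ref{p:op} requires $t>0$: replace it by the strict embedding $H^1\hookrightarrow B^{-\theta}_{\infty,2}$ with $0<\theta<1$, so that $T_uv$ lands in $B^{2-\theta}_{2,1}\hookrightarrow H^{2-\theta}\hookrightarrow H^{1}=H^{\eta}$, which costs nothing since there is slack above the target space.
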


The proof to Corollary \ref{c:product-2} can be found in e.g. \cite{F-G_RMI}. Therefore, let us only show the proof to Corollary \ref{c:product}.
\begin{proof}[Proof of Corollary \ref{c:product}]
Let us take two tempered distributions $a\in H^{-\g}$ and~$w\in H^{\sigma}$, and write
\begin{equation} \label{eq:paraprod_a-w}
a\,w\,=\,T_aw\,+\,T_wa\,+\,R(a,w)\,.
\end{equation}
By a systematic use of Proposition \ref{p:op} and of embeddings $H^s\hra B^{s-3/2}_{\infty,\infty}$, we deduce that
$$
\left\|T_aw+T_wa\right\|_{H^{\sigma-\g-3/2}}\,+\,\left\|R(a,w)\right\|_{B^{\sigma-\g}_{1,1}}\,\leq\,C\,\|a\|_{H^{-\g}}\,\|w\|_{H^{\sigma}}\,.
$$
At this point, the continuous embedding $B^{\sigma-\g}_{1,1}\,\hra\,H^{\sigma-\g-3/2}$, which follows from Proposition \ref{p:embed}, completes the proof of our claim.

Let us switch to the proof of claim (ii). Let us use \eqref{eq:paraprod_a-w} again, where now $a$ and $w$ belong both to $H^1$.
Notice that Proposition \ref{p:embed} implies the embedding $H^1\hookrightarrow B^{-1/2}_{\infty,\infty}$: then, from Proposition~\ref{p:op} we infer that $T_aw$ and $T_wa$ both belong
to $H^{1/2}$. On the other hand, the same Proposition~\ref{p:op} implies $R(a,w)\in B^{2}_{1,1}\,\hookrightarrow\,H^{1/2}$, where we have used Proposition \ref{p:embed} again.

The corollary is now proved.
\end{proof}

To conclude, let us recall Gagliardo-Nirenberg inequalities, which we will repeatedly use in our analysis. We refer e.g. to Corollary 1.2 of \cite{C-D-G-G}
for their proof.
\begin{prop} \label{p:Gagl-Nir}
Let $p\in[2,+\infty[\,$ such that $1/p\,>\,1/2\,-\,1/d$. There exists a constant $C>0$ such that, for any domain $\Omega\subset\R^d$
and for all $u\in H^1_0(\Omega)$, the following inequality holds true:
$$
\|u\|_{L^p(\Omega)}\,\leq\,C\,\|u\|_{L^2(\Omega)}^{1-\lam}\;\|\nabla u\|_{L^2(\Omega)}^{\lam}\,,\qquad\qquad\mbox{ with }\qquad
\lam\,=\,\frac{d\,(p-2)}{2\,p}\,.
$$
\end{prop}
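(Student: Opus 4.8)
The statement being classical, I would only indicate a self-contained route based on the Littlewood--Paley tools just introduced. The plan is to prove the inequality on the whole space: extending $u\in H^1_0(\Omega)$ by zero yields $u\in L^2(\R^d)\cap\dot H^1(\R^d)$ with unchanged $L^2$ and homogeneous $\dot H^1$ norms (in particular $u\in\mc S'_h$, since $\|\dot S_ju\|_{L^\infty}\lesssim 2^{jd/2}\|u\|_{L^2}\ra0$ as $j\ra-\infty$ by Lemma \ref{l:bern}), so that it suffices to bound $\|u\|_{L^p(\R^d)}$ by $\|u\|_{L^2}^{1-\lam}\|\nabla u\|_{L^2}^{\lam}$ with a universal constant. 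Writing $u=\sum_{j\in\Z}\dot\Delta_ju$ and using the triangle inequality in $L^p$, one has $\|u\|_{L^p}\leq\sum_{j\in\Z}\|\dot\Delta_ju\|_{L^p}$. The key is then to estimate each dyadic block in two complementary ways by means of the Bernstein inequalities of Lemma \ref{l:bern}: on the one hand, since $p\geq2$,
\begin{equation*}
\|\dot\Delta_ju\|_{L^p}\,\leq\,C\,2^{jd\left(\frac{1}{2}-\frac{1}{p}\right)}\,\|\dot\Delta_ju\|_{L^2}\,=\,C\,2^{j\lam}\,\|\dot\Delta_ju\|_{L^2}\,,
\end{equation*}
and on the other hand, $\dot\Delta_ju$ being spectrally supported in an annulus of size $2^j$,
\begin{equation*}
\|\dot\Delta_ju\|_{L^2}\,\leq\,\|u\|_{L^2}\qquad\mbox{ and }\qquad\|\dot\Delta_ju\|_{L^2}\,\leq\,C\,2^{-j}\,\|\nabla u\|_{L^2}\,.
\end{equation*}

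Combining these bounds gives $\|\dot\Delta_ju\|_{L^p}\leq C\,2^{j\lam}\,\min\bigl(\|u\|_{L^2}\,,\,2^{-j}\|\nabla u\|_{L^2}\bigr)$. Next I would split the sum over $j\in\Z$ at the integer $J$ closest to $\log_2\bigl(\|\nabla u\|_{L^2}/\|u\|_{L^2}\bigr)$, using the first estimate for $j<J$ and the second for $j\geq J$ (the degenerate cases $u\equiv0$ or $\nabla u\equiv0$ being trivial). For the low frequencies one sums the geometric series $\sum_{j<J}2^{j\lam}\lesssim 2^{J\lam}$, which converges precisely because $\lam>0$ (i.e.\ $p>2$); for the high frequencies one sums $\sum_{j\geq J}2^{j(\lam-1)}\lesssim 2^{J(\lam-1)}$, which converges precisely because $\lam<1$, and this last condition is exactly the admissibility constraint $1/p>1/2-1/d$ (indeed $\lam=d(1/2-1/p)<1\Leftrightarrow 1/p>1/2-1/d$). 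One therefore gets
\begin{equation*}
\|u\|_{L^p}\,\leq\,C\,\Bigl(2^{J\lam}\,\|u\|_{L^2}\,+\,2^{J(\lam-1)}\,\|\nabla u\|_{L^2}\Bigr)\,,
\end{equation*}
and the choice of $J$ balances the two summands, making both comparable to $\|u\|_{L^2}^{1-\lam}\|\nabla u\|_{L^2}^{\lam}$; this yields the claim with the stated exponent $\lam=d(p-2)/(2p)$.

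Because the Bernstein inequalities are already at our disposal, there is no real obstacle here: the whole content reduces to the dyadic bookkeeping above, and the only point deserving care is matching the convergence of the two geometric series with the admissibility range of $p$ (namely $0\leq\lam<1$). Should one instead want a proof from scratch, the genuinely substantial ingredient would be the underlying Sobolev embedding: one would establish the endpoint inequality $\|u\|_{L^{2^*}}\leq C\|\nabla u\|_{L^2}$ in dimension $d\geq3$, with $2^*=2d/(d-2)$, via the classical integration-along-coordinate-axes argument together with the generalised H\"older inequality, and then interpolate between $L^2$ and $L^{2^*}$ — the relation $1/p=(1-\lam)/2+\lam/2^*$ reproducing the same value of $\lam$ — or, equivalently, homogenise the embedding $H^1\hra L^p$ by a scaling argument $u_\de(x)=u(\de x)$ followed by optimisation in $\de$. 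As all of this is standard, we simply refer to \cite{C-D-G-G}.
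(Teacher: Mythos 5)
Your proof is correct, and it is worth noting that the paper itself contains no proof of Proposition \ref{p:Gagl-Nir} at all: it simply refers to Corollary 1.2 of \cite{C-D-G-G}. In that reference the inequality is obtained in two separate steps, namely the homogeneous Sobolev embedding $\dot H^{\lam}(\R^d)\hra L^p(\R^d)$ (whose validity range $0\leq\lam<d/2$, i.e.\ $\lam<1$ once $\lam=d(1/2-1/p)$, encodes exactly the constraint $1/p>1/2-1/d$) combined with the elementary interpolation estimate $\|u\|_{\dot H^{\lam}}\leq\|u\|_{L^2}^{1-\lam}\,\|\nabla u\|_{L^2}^{\lam}$, proved by H\"older's inequality on the Fourier side. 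Your dyadic argument merges these two steps into a single frequency-by-frequency estimate, with the splitting index $J$ playing the role of the interpolation; it is fully self-contained given Lemma \ref{l:bern}, which the paper has already stated, and it has two genuine merits over the bare citation. First, you make explicit the extension-by-zero reduction from $H^1_0(\Omega)$ to $\R^d$, which is precisely what guarantees the uniformity of the constant over all domains $\Omega\subset\R^d$ claimed in the statement. Second, the admissibility condition $1/p>1/2-1/d$ appears transparently as the summability condition $\lam<1$ for the high-frequency geometric series, while $\lam>0$ governs the low frequencies. Two cosmetic points only: the endpoint $p=2$ (allowed by the statement) gives $\lam=0$, for which your low-frequency series diverges, so you should dispose of it first as a triviality; and the inequality $\|u\|_{L^p}\leq\sum_{j\in\Z}\|\dot\Delta_ju\|_{L^p}$ deserves one line of justification --- since the partial sums of $\sum_j\dot\Delta_ju$ converge to $u$ in $\mc S'$ (as $u\in\mc S'_h$, which you correctly verified) and the series of $L^p$ norms is summable by your estimates, the decomposition converges in $L^p$ with sum $u$. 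Neither point affects the correctness of the argument.
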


\subsection{Heat equation with fast diffusion} \label{app:heat}
In this subsection, we prove decay estimates for the derivatives of parabolic-type equations with fast diffusion in time.
More precisely, let $\nu:[0,1]\,\longrightarrow\,[0,1]$ be a continuous, strictly increasing function such that $\nu(0)=0$.
For all $0<\veps<1$, consider the family of heat equations
\begin{equation} \label{eq:fast-heat}
\left\{\begin{array}{l}
        \d_t\Phi_\veps\,-\,\dfrac{1}{\nu(\veps)}\,\Delta\Phi_\veps\,=\,g_\veps \\[1ex]
        \big(\Phi_\veps\big)_{|t=0}\,=\,\Phi_{0,\veps}\,,
       \end{array}
\right.
\end{equation}
where the sequences $\big(\Phi_{0,\veps}\big)_\veps$ and $\big(g_\veps\big)_\veps$ are uniformly bounded respectively in the space $H^\infty(\R^2)\,:=\,\bigcap_{s\in\R}H^s(\R^2)$ and
$L^2\big([0,T];H^\infty(\R^2)\big)$, for all $T>0$. Notice that we can represent the smooth solution $\Phi_\veps$ according to Duhamel's formula as
\begin{equation} \label{eq:duhamel}
\Phi_{\veps}(t,x)\,=\,e^{\Delta\,t/\nu(\veps)}\,\Phi_{0,\veps}(x)\,+\,\int^t_0e^{\Delta\,(t-\tau)/\nu(\veps)}\,g(\tau,x)\,d\tau\,.
\end{equation}

It is well-known that the solutions to a linear heat equation decay in time, in suitable norms. Then, we expect that the solutions $\Phi_\veps$ to \eqref{eq:fast-heat}, together with their derivatives,
decay to $0$ when $\veps\ra0^+$: we need a precise quantitative estimate for the norms of the higher order derivatives.

Notice that finding the exact rate in terms of $\veps$ is the key for the analysis of Subsection \ref{sss:strong-vort}; therefore, we will have to face the difficulty of handling the lack
of time integrability near $0$. This is also the main reason why we prefer not to rescale the time variable. Hence, previous results (see e.g. \cite{Z}; see also \cite{DeA-F} and the references therein)
on the long-time behaviour of solutions to parabolic equations are not useful in our context.

\begin{thm} \label{t:fast-heat}
Let $\big(\Phi_{0,\veps}\big)_\veps\,\subset\, H^\infty(\R^2)$ and $\big(g_\veps\big)_\veps\,\subset\,L^2_{\rm loc}\big(\R_+;H^\infty(\R^2)\big)$.
For all $\veps\in\,]0,1]$, let $\Phi_\veps$ be the smooth solution to the Cauchy problem \eqref{eq:fast-heat}.
Let $s\geq1$ and $T>0$ be fixed.

Then, for any $\de\in\,]0,1[\,$ fixed, there exists a constant $C=C(T,s,\de)$ and a number $\veps_0=\veps_0(s,\de)$, such that, for all $\veps\leq\veps_0$, one has the estimate
$$
\left\|(-\Delta)^s\Phi_\veps\right\|^2_{L^2\big(\,]\de,T[\,;L^2\big)}\,\leq\,C\,\big(\nu(\veps)\big)^s\left(\left\|\Phi_{0,\veps}\right\|^2_{L^2}\,+\,\left\|g_\veps\right\|^2_{L^2_T(H^s)}\right)\,.
$$
\end{thm}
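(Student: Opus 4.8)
The plan is to work on the Fourier side in the space variable and to exploit the explicit representation \eqref{eq:duhamel}. Writing $\nu=\nu(\varepsilon)$ for brevity and passing to the Fourier transform in $x$, the solution splits as $\widehat\Phi_\varepsilon(t,\xi)=e^{-|\xi|^2 t/\nu}\,\widehat\Phi_{0,\varepsilon}(\xi)+\int_0^t e^{-|\xi|^2(t-\tau)/\nu}\,\widehat g_\varepsilon(\tau,\xi)\,d\tau$, so that, by Plancherel, bounding $\|(-\Delta)^s\Phi_\varepsilon\|_{L^2(]\delta,T[;L^2)}^2$ amounts to estimating $\int_\delta^T\int_{\R^2}|\xi|^{4s}\,|\widehat\Phi_\varepsilon(t,\xi)|^2\,d\xi\,dt$. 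I would treat the homogeneous and the forcing contributions separately; the guiding idea throughout is to convert the excess powers of $|\xi|$ carried by $(-\Delta)^s$ into powers of the small parameter $\nu$, using the strong damping of the semigroup $e^{-|\xi|^2 t/\nu}$ together with the elementary fact that $\sup_{y\geq 0}y^{a}\,e^{-y}=:C_a<+\infty$ for every $a>0$.

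For the initial-data part the mechanism is transparent: applying the last inequality with $y=|\xi|^2 t/\nu$ and $a=2s$ gives the pointwise bound $|\xi|^{4s}\,e^{-2|\xi|^2 t/\nu}\leq C_{2s}\,(\nu/t)^{2s}$. Here the restriction to $t>\delta$ is essential, since it is the only thing that makes $\int_\delta^T t^{-2s}\,dt$ finite; one then ends up with a contribution bounded by $C(T,s,\delta)\,\nu^{2s}\,\|\Phi_{0,\varepsilon}\|_{L^2}^2$, which is even better than required because $\nu\leq 1$ forces $\nu^{2s}\leq\nu^{s}$.

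For the forcing part I would apply Cauchy--Schwarz in $\tau$ after writing the kernel as $|\xi|^{2s}e^{-|\xi|^2(t-\tau)/\nu}=\big(|\xi|^{4s}e^{-|\xi|^2(t-\tau)/\nu}\big)^{1/2}\big(e^{-|\xi|^2(t-\tau)/\nu}\big)^{1/2}$: the first factor integrates in $\tau$ to a quantity of size $\nu\,|\xi|^{4s-2}$ (again via $\int_0^\infty e^{-|\xi|^2\theta/\nu}\,d\theta=\nu/|\xi|^2$ and the sup-bound), while the second, after a Fubini exchange of $t$ and $\tau$ and a further use of $\int e^{-|\xi|^2\theta/\nu}\,d\theta=\nu/|\xi|^2$, reproduces the time-integral of $|\widehat g_\varepsilon|^2$. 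The two gained factors $\nu/|\xi|^2$ are exactly what redistribute the frequency weights between the semigroup and the source; balancing them against the $H^s$-control of $g_\varepsilon$ is what is meant to produce the announced power of $\nu$ together with $\|g_\varepsilon\|_{L^2_T(H^s)}^2$.

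The delicate point, and the one I expect to be the main obstacle, is the behaviour near the diagonal $\tau=t$ (equivalently, near $\theta=0$ after setting $\theta=t-\tau$): once enough derivatives are demanded the time-kernel ceases to be integrable there, and the crude Cauchy--Schwarz estimate degrades. This is precisely where I would implement the approximation procedure alluded to in the discussion of \eqref{eq:intro-Phi}. Concretely, I would split the Duhamel integral at $\tau=\delta/2$: on the far part $\tau<\delta/2$ one has $t-\tau\geq\delta/2$, so $e^{-|\xi|^2(t-\tau)/\nu}\leq e^{-|\xi|^2\delta/(2\nu)}$ and the sup-bound furnishes an arbitrarily large power of $\nu$; on the near-diagonal part $\delta/2<\tau<t$ I would regularise by distributing the derivatives $|\xi|^{2s}=|\xi|^{s}\cdot|\xi|^{s}$, placing one half on the smoothing semigroup and the other on $(-\Delta)^{s/2}g_\varepsilon$, and then absorbing the singular time-kernel against the $H^s$-bound on $g_\varepsilon$. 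The threshold $\varepsilon\leq\varepsilon_0(s,\delta)$ should enter exactly to guarantee that $\nu(\varepsilon)$ is small enough for all these sup-type estimates to sit in their favourable regime uniformly on $]\delta,T[$.
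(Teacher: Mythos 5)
Your reduction to the Fourier side, the treatment of the initial-datum term, and the far-from-diagonal split at $\tau=\de/2$ are all correct and coincide with the paper's proof (the region $\{t-\tau\geq\de/2\}$ is the term $J_1$ there, handled exactly by your bound $\sup_{y\geq0}y^{a}e^{-y}<+\infty$). The genuine gap is the near-diagonal region, which you yourself flag as the main obstacle but then leave at the level of a hope: neither of the two mechanisms you propose can deliver the stated power $\nu^s$ once $s\geq2$. The Cauchy--Schwarz splitting of the kernel gains exactly two factors $\nu/|\xi|^2$ in the squared norm and nothing more, so at frequencies $|\xi|\sim1$ it produces $O(\nu^2)$, not $O(\nu^s)$; and the redistribution $|\xi|^{2s}=|\xi|^{s}\cdot|\xi|^{s}$ leads, after the sup-bound, to the time-kernel $(\nu/\theta)^{s/2}$ with $\theta=t-\tau$, which is non-integrable at $\theta=0$ precisely when $s\geq2$. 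Cutting the Duhamel integral at $\theta=b$ does not resolve this: controlling the intermediate region $b<\theta<\de/2$ through the kernel's $L^1$ norm costs $b^{1-s/2}$, so one needs $b$ bounded below by a constant, while controlling the corner $\theta<b$ by putting all derivatives on $g_\veps$ costs a factor $b$ with no power of $\nu$, so one needs $b\lesssim\nu^s$; for $s>2$ the two requirements are incompatible.

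In fact, for $s>2$ the obstacle cannot be circumvented at all with only $\left\|g_\veps\right\|_{L^2_T(H^s)}$ on the right-hand side: take $\Phi_{0,\veps}=0$ and $g_\veps\equiv G$ time-independent, with $\widehat G$ supported in $\{1/2\leq|\xi|\leq2\}$. Then $\widehat\Phi_\veps(t,\xi)=\nu\,|\xi|^{-2}\bigl(1-e^{-|\xi|^2t/\nu}\bigr)\widehat G(\xi)$, so for $t\geq\de$ and $\veps$ small one gets $\left\|(-\Delta)^s\Phi_\veps\right\|^2_{L^2(]\de,T[;L^2)}\geq c\,\nu^2$ with $c>0$ independent of $\veps$, which exceeds $C\,\nu^s$ as $\veps\ra0^+$ whenever $s>2$; the same computation defeats the homogeneous-$H^s$ reading of the statement. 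So the ``balance'' you invoke in your fourth paragraph does not exist in the regime the paper actually uses the theorem, namely $s\geq s_0>2$ as in \eqref{def:s}. For comparison: the paper splits the Duhamel integral into three regions with cutoff $a=\nu^s$, and handles the corner $t-\tau<a$ by bounding the exponential by $1$ and paying the full $H^s$ norm of $g_\veps$ (a cleaner device than your derivative splitting, yielding the factor $a=\nu^s$ directly); but in the intermediate region ($J_2$) it asserts, via Young's inequality, the bound $\nu^s/\de^{s-1}$, whereas the $L^1$ norm of the kernel $(t-\tau)^{-s}$ on $\{t-\tau>a\}$ is of size $a^{1-s}/(s-1)$, not $\de^{1-s}$, which with $a=\nu^s$ gives only $\nu^{s(2-s)}$. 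In other words, the step you left open is exactly the point where the paper's own argument breaks down for $s>2$, consistently with the counterexample above; your plan can be completed as written for $1\leq s<2$ (with a logarithmic loss at $s=2$), but not beyond that threshold.
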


\begin{proof}
Applying the operator $(-\Delta)^s$ to equation \eqref{eq:duhamel}, we find
$$
(-\Delta)^s\Phi_{\veps}(t,x)\,=\,e^{\Delta\,t/\nu(\veps)}\,(-\Delta)^s\Phi_{0,\veps}(x)\,+\,\int^t_0e^{\Delta\,(t-\tau)/\nu(\veps)}\,(-\Delta)^sg(\tau,x)\,d\tau\,.
$$
Therefore, we need to estimate the initial datum term and the forcing term separately, the latter being harder, since one has to deal with the time integral.

Let us start with the term containing the initial datum: by Plancherel theorem we have
\begin{align*}
\left\|e^{\Delta\,t/\nu(\veps)}\,(-\Delta)^s\Phi_{0,\veps}\right\|^2_{L^2}\,&=\,C\,\int_{\R^2}e^{-2|\xi|^2\,t/\nu(\veps)}\,|\xi|^{2s}\,\left|\what\Phi_{0,\veps}(\xi)\right|^2\,d\xi \\
&=\,\frac{C_s\,\nu^s(\veps)}{t^s}\,\int_{\R^2}e^{-2|\xi|^2\,t/\nu(\veps)}\left(\frac{2\,t\,|\xi|^2}{\nu(\veps)}\right)^{\!\!s}\,\left|\what\Phi_{0,\veps}(\xi)\right|^2\,d\xi \\
&\leq\,\frac{C_s\,\nu^s(\veps)}{t^s}\,\left\|\Phi_{0,\veps}\right\|^2_{L^2}\,.
\end{align*}
Therefore, given $0<\de<1$, integrating in time over $\,]\de,T[\,$ yields
\begin{equation} \label{est:heat_0}
\left\|e^{\Delta\,t/\nu(\veps)}\,(-\Delta)^s\Phi_{0,\veps}\right\|^2_{L^2\big(\,]\de,T[\,;L^2\big)}\,\leq\,C_s\,\frac{\nu^s(\veps)}{\de^{s-1}}\,\left\|\Phi_{0,\veps}\right\|^2_{L^2}\,,
\end{equation}
where the factor $1/\de^{s-1}$ has to be replaced by $-\log\de$ when $s=1$.

Let us now deal with the forcing term: using Plancherel theorem again, we find
\begin{align*}
&\left\|\int^t_0e^{\Delta\,(t-\tau)/\nu(\veps)}\,(-\Delta)^sg(\tau)\,d\tau\right\|^2_{L^2\big(\,]\de,T[\,;L^2\big)} \\
&\qquad\qquad=\,C\,\int^T_\de\int_0^t\int_{\R^2}
e^{-2|\xi|^2\,(t-\tau)/\nu(\veps)}\,|\xi|^{2s}\,\left|\what g_\veps(\tau,\xi)\right|^2\,d\xi\,d\tau\,dt\,=\,C\,\big(J_1\,+\,J_2\,+\,J_3\big)\,,
\end{align*}
where we have defined, for $a\leq\de/2$ to be chosen later,
\begin{align*}
J_1\,&:=\,\int^T_\de\int_{0}^{a}\int_{\R^2}e^{-2|\xi|^2\,(t-\tau)/\nu(\veps)}\,|\xi|^{2s}\,\left|\what g_\veps(\tau,\xi)\right|^2\,d\xi\,d\tau\,dt \\
J_2\,&:=\,\int^T_\de\int_{a}^{t-a}\int_{\R^2}e^{-2|\xi|^2\,(t-\tau)/\nu(\veps)}\,|\xi|^{2s}\,\left|\what g_\veps(\tau,\xi)\right|^2\,d\xi\,d\tau\,dt \\
J_3\,&:=\,\int^T_\de\int_{t-a}^t\int_{\R^2}e^{-2|\xi|^2\,(t-\tau)/\nu(\veps)}\,|\xi|^{2s}\,\left|\what g_\veps(\tau,\xi)\right|^2\,d\xi\,d\tau\,dt\,.
\end{align*}

Let us start by considering the integral $J_1$: we remark that, by our choice of $a$, we have $t-\tau\,\geq\,\de/2$. Therefore, arguing similarly as for the term containing the initial datum, we get
\begin{align} \label{est:J_1}
\left|J_1\right|\,&\leq\,C\int^T_\de\int^a_0\int_{\R^2}e^{-\de\,|\xi|^2/\nu(\veps)}\,|\xi|^{2s}\,\left|\what g_\veps(\tau,\xi)\right|^2\,d\xi\,d\tau\,dt\;\leq\;
C_s\,\frac{\nu^s(\veps)}{\de^s}\,T\,\left\|g_\veps\right\|_{L^2_T(L^2)}\,.
\end{align}

As for $J_2$, the argument is pretty similar: to begin with, one computes
\begin{align*}
|J_2|\,&\leq\,C\,\nu^s(\veps)\int^T_\de\int^{t-a}_a\int_{\R^2}\frac{1}{(t-\t)^s}\,e^{-2|\xi|^2\,(t-\tau)/\nu(\veps)}\,\left(\frac{t-\t}{\nu(\veps)}|\xi|^{2}\right)^{\!\!s}\,
\left|\what g_\veps(\tau,\xi)\right|^2\,d\xi\,d\tau\,dt \\
&\leq\,C_s\,\nu^s(\veps)\int^T_\de\int_a^{t-a}\frac{1}{(t-\tau)^s}\,\left\|g_\veps(\tau)\right\|^2_{L^2}\,d\tau\,dt\,.
\end{align*}
Hence, Young inequality for convolutions implies, for $s>1$
\begin{equation} \label{est:J_2}
\left|J_2\right|\,\leq\,C_s\,\frac{\nu^s(\veps)}{\de^{s-1}}\,\left\|g_\veps\right\|^2_{L^2_T(L^2)}\,,
\end{equation}
where the factor $1/\de^{s-1}$ has to be replaced by $-\log\de$ when $s=1$, as before.

Finally, let us deal with $J_3$: first of all, observe that, after a change of variable in the second integral, we can write
\begin{align*}
J_3\,&=\,\int^T_\de\int_{0}^a\int_{\R^2}e^{-2|\xi|^2\,\tau/\nu(\veps)}\,|\xi|^{2s}\,\left|\what g_\veps(t-\tau,\xi)\right|^2\,d\xi\,d\tau\,dt\,.
\end{align*}
At this point, bounding the exponential term by $1$ and inverting the order of the time integrals, we gather
\begin{align*}
\left|J_3\right|\,&\leq\,\int^T_\de\int_0^a\left\|g_\veps(t-\tau)\right\|^2_{H^s}\,d\tau\,dt\;\leq\;\int^a_0\int^T_\de\left\|g_\veps(t-\tau)\right\|^2_{H^s}\,dt\,d\tau\,,
\end{align*}
which finally yields the estimate
\begin{equation} \label{est:J_3}
\left|J_3\right|\,\leq\,C\,a\,\left\|g_\veps\right\|^2_{L^2_T(H^s)}\,.
\end{equation}
To conclude, we make the choice $a=\nu^s(\veps)$, which requires to introduce the constraint $\veps\leq\veps_\de$, where $\veps_\de$ is such that
$$
\nu^s(\veps_\de)\,\leq\,\de/2\,.
$$

Putting together estimates \eqref{est:heat_0}, \eqref{est:J_1}, \eqref{est:J_2} and \eqref{est:J_3} completes the proof of the theorem.
\end{proof}

\section{Study of the singular perturbation} \label{s:singular}
In this section we study preliminary properties for tackling the singular perturbation problem. In a first time, we derive uniform bounds for the family of weak solutions $\big(\rho_\veps,u_\veps\big)_\veps$.
Those bounds allow us to identify weak-limit points $(\rho,u)$: in Subsection \ref{ss:constraints} we then derive constraints $(\rho,u)$ has to satisfy.
Finally, in Subsection \ref{ss:further} we come back to the mass equation, and infer further properties and bounds for the density functions.

\subsection{Uniform bounds and first convergence properties} \label{ss:unif-bounds}

In this subsection, we derive uniform bounds for the family $\big(\rho_\veps,u_\veps\big)_\veps$. All the bounds come from the energy inequality \eqref{est:energy},
which is satisfied by assumption.

For this, following a classical approach (see e.g. \cite{F-N}), it is convenient to introduce a decomposition of any function $h$ in its \emph{essential} and \emph{residual} parts.
To begin with, 
for almost every time $t>0$ and all $\veps\in\,]0,1]$, we define the sets
$$
\Omega_\ess^\veps(t)\,:=\,\left\{x\in\Omega\;\big|\quad \rho_\veps(t,x)\in\left[1/2\,,\,2\right]\right\}\,,\qquad\Omega^\veps_\res(t)\,:=\,\Omega\setminus\Omega^\veps_\ess(t)\,.
$$
Then, given a function $h$, we write
$$
h\,=\,\left[h\right]_\ess\,+\,\left[h\right]_\res\,,\qquad\qquad\mbox{ where }\qquad \left[h\right]_\ess\,:=\,h\,\mathds{1}_{\Omega_\ess^\veps(t)}\,.
$$
Here above, $\mathds{1}_A$ denotes the characteristic function of a set $A\subset\Omega$.

We are now ready to establish uniform bounds.
First of all, we remark that, in view of the structure of the initial data, the right-hand side of \eqref{est:energy} is bounded, uniformly in $\veps>0$. Then, we immediately deduce
that
\begin{equation} \label{ub:u-Du}
\left(\sqrt{\rho_\veps}\,u_\veps\right)_\veps\,\subset\,L^\infty\big(\R_+;L^2(\Omega)\big)\qquad\mbox{ and }\qquad \left(\nabla u_\veps\right)_\veps\,\subset\,L^2\big(\R_+;L^2(\Omega)\big)\,,
\end{equation}
together with the bound
\begin{equation} \label{ub:div-u}
\left(\frac{1}{\veps^\beta}\;\div u_\veps\right)_\veps\,\subset\,L^2\big(\R_+;L^2(\Omega)\big)\,.
\end{equation}
For later use, let us introduce $\theta\in L^2\big(\R_+;L^2(\Omega)\big)$ to be the function such that
\begin{equation} \label{cv:div-u}
\frac{1}{\veps^\beta}\;\div u_\veps\,\rightharpoonup\,\theta\qquad\qquad \mbox{ in }\qquad L^2\big(\R_+;L^2(\Omega)\big)\,.
\end{equation}

Next, from the relative entropy functional it is customary to get (see e.g. \cite{F-N} for details)
\begin{align}
 \sup_{t\in\R_+}\left\|\frac{1}{\veps^\alpha}\;\left[\rho_\veps-1\right]_\ess\right\|_{L^2(\Omega)}\,&\leq\,C  \label{ub:dens-ess} \\
 \sup_{t\in\R_+}\left\|\left[\rho_\veps\right]_\res\right\|^\g_{L^\g(\Omega)}\,+\,\sup_{t\in\R_+}\left\|\left[1\right]_\res\right\|_{L^1(\Omega)}\,&\leq\,C\,\veps^{2\alpha}\,. \label{ub:dens-res}
\end{align}
Observe that all those bounds hold true also in the endpoint case $\alpha=0$.

\subsubsection{Additional bounds when $0<\alpha<1$} \label{sss:ub-a}
Let us restrict for a while to the case $0<\alpha<1$. From \eqref{ub:dens-ess} and \eqref{ub:dens-res}, we can write
\begin{align} \label{eq:rho-decomp}
\rho_\veps\,-\,1\,=\,\rho_\veps^{(1)}\,+\,\rho_\veps^{(2)}\,, 
\end{align}
where, for all $T>0$, one has $\rho_\veps^{(1)}\longrightarrow0$ in $L^\infty_T(L^2)$ and $\rho_\veps^{(2)}\longrightarrow0$ in $L^\infty_T(L^\g)$.
From the previous decomposition, arguing as in \cite{F-G-N} (see also Paragraph \ref{sss:ub-0} below) it is easy to get
\begin{equation} \label{ub:u}
\big(u_\veps\big)_\veps\,\subset\,L^2_{\rm loc}\big(\R_+;L^2(\Omega)\big)\,.
\end{equation}
Therefore, there exists a $u\in L^2_{\rm loc}\big(\R_+;H^1(\Omega)\big)$ such that, up to an extraction,
\begin{equation} \label{cv:u}
u_\veps\,\rightharpoonup\,u\qquad\qquad\mbox{ in }\qquad L^2_{\rm loc}\big(\R_+;H^1(\Omega)\big)\,.
\end{equation}
Using \eqref{eq:rho-decomp} and Sobolev embeddings, we also get that
$\big(\rho_\veps\,u_\veps\big)_\veps$ is uniformly bounded in $L^2_T(L^2+L^{3/2}+L^{6\g/(\g+6)})$ for all $T>0$, and 
\begin{equation}\label{cv:rho-u}
\rho_\veps\,u_\veps\,\rightharpoonup\,u\qquad\qquad\mbox{ in }\qquad L^2_T(L^2+L^{3/2}+L^{6\g/(\g+6)})\,.
\end{equation}

Next, let us define the quantity
$$
r_\veps\,:=\,\frac{1}{\veps^\alpha}\,\big(\rho_\veps\,-\,1\big)\,.
$$
From the uniform bound \eqref{ub:dens-ess}, we immediately deduce (omitting the extraction of a suitable subsequence) that
\begin{equation} \label{cv:r_ess}
\big[r_\veps\big]_\ess\,\stackrel{*}{\rightharpoonup}\,r\qquad\qquad\mbox{ in }\qquad L^\infty\big(\R_+;L^2(\Omega)\big)\,,
\end{equation}
for some $r$ belonging to that space.
On the other hand, in view of \eqref{ub:dens-res}, we can bound
\begin{equation} \label{ub:r_res}
\int_\Omega\left|\big[r_\veps\big]_\res\right|^\g\,dx\,\leq\,\frac{1}{\veps^{\g\,\alpha}}\left(\int_\Omega\left|\big[\rho_\veps\big]_\res\right|^\g\,dx\,+\,\int_\Omega[1]_\res\,dx\right)\,\leq\,
C\,\veps^{\alpha(2-\g)}\,,
\end{equation}
which immediately implies that
\begin{equation} \label{cv:r_res}
\big[r_\veps\big]_\res\,\longrightarrow\,0\qquad\qquad\mbox{ in }\qquad L^\infty\big(\R_+;L^p(\Omega)\big)\,,\qquad \forall\;1\leq p<\min\{2,\g\}\,.
\end{equation}

\subsubsection{Additional bounds when $\alpha=0$} \label{sss:ub-0}
Now, we consider the case $\alpha=0$. Recall that, in this case, we restrict our attention to the $2$-dimensional domain $\R^2$, and we assume $\g>1$ in \eqref{hyp:P}.

When $\alpha=0$, we still dispose of estimates \eqref{ub:dens-ess}, \eqref{ub:dens-res}, but they do not give any smallness property
for the density variations. Nonetheless, using \eqref{eq:rho-decomp} again and arguing as in \cite{F-G-N}, we are able to establish also in the case $\alpha=0$ the uniform boundedness property \eqref{ub:u}.
Indeed, first of all we write
\begin{equation} \label{est:u-L^2_start}
\int_{\R^2}|u_\veps|^2\,dx\,\leq\,\int_{\R^2}\rho_\veps\,|u_\veps|^2\,dx\,+\,\int_{\R^2}\bigl|\rho_\veps-1\bigr|\,|u_\veps|^2\,dx\,,
\end{equation}
where the former term in the right-hand side is uniformly bounded in $L^\infty(\R_+;L^2)$ in view of \eqref{ub:u-Du}.
For the latter term, we can use the decomposition \eqref{eq:rho-decomp}, where $\big(\rho_\veps^{(1)}\big)_\veps$ and $\big(\rho_\veps^{(2)}\big)_\veps$
are uniformly bounded in $L^\infty(\R_+;L^2)$ and $L^\infty(\R_+;L^\g)$ respectively. On the one hand, by H\"older and Gagliardo-Nirenberg inequalities, we can estimate
\begin{align}
\int_{\R^2}\left|\rho_\veps^{(1)}\right|\,|u_\veps|^2\,&\leq\,\left\|\rho_\veps^{(1)}\right\|_{L^2}\,
\left\|u_\veps\right\|^2_{L^4}\,\leq\,C\,\left\|u_\veps\right\|_{L^2}\,\left\|\nabla u_\veps\right\|_{L^2}\,. \label{est:u-L^2_1}
\end{align}
On the other hand, after defining $\g'$ such that $1/\g\,+\,1/\g'\,=\,1$, thanks to \eqref{ub:dens-res} we infer
\begin{align*}
\int_{\R^2}\left|\rho_\veps^{(2)}\right|\,|u_\veps|^2\,&\leq\,\left\|\rho_\veps^{(2)}\right\|_{L^\g}\,\left\|u_\veps\right\|^2_{L^{2\g'}}\,. 
\end{align*}
Notice that, since $\g>1$, we have $1<\g'<+\infty$, hence we can apply Gagliardo-Nirenberg inequality again: we find
\begin{align} \label{est:u-L^2_2}
\int_{\R^2}\left|\rho_\veps^{(2)}\right|\,|u_\veps|^2\,&\leq\,C\,\left(\left\|u_\veps\right\|^2_{L^{2}}\right)^{\!1-1/\g}\,
\left(\left\|\nabla u_\veps\right\|_{L^2}^{2}\right)^{\!1/\g}\,.
\end{align}
Therefore, inserting \eqref{est:u-L^2_1} and \eqref{est:u-L^2_2} into \eqref{est:u-L^2_start} and applying Young inequality,
we finally deduce the claimed estimate: there exists a constant $C>0$ such that, for all $\veps>0$ and all $T>0$, one has
\begin{equation} \label{est:u-L^2}
\left\|u^\veps\right\|_{L^2_T(L^2)}\,\leq\,C\,.
\end{equation}
The previous bound immediately implies \eqref{cv:u}, as in the previous paragraph.

\medbreak
Next, let us turn our attention to the density fluctuations: properties like \eqref{cv:r_ess} and \eqref{ub:r_res} are not very useful in the case $\alpha=0$.
Therefore, let us argue in a different way.

Resorting to the decomposition \eqref{eq:rho-decomp} again, we see that $\big(\rho_\veps-1\big)_\veps\,\subset\,L^\infty_T(L^2)$ if $\g\geq2$. On the other hand, when $1<\g<2$,
dual Sobolev embeddings imply that
\begin{equation} \label{ub:rho_e-trho}
\left(\rho_\veps\,-\,1\right)_\veps\,\subset\,L^\infty_T(H^{-s_1})\,,\qquad\qquad\mbox{ where }\qquad s_1\,:=\,(2-\g)/\g\,,
\end{equation}
for all fixed time $T>0$. From now on, we will use \eqref{ub:rho_e-trho} for any $\g>1$, with the convention that
$s_1=0$ if $\g\geq2$. Therefore, there exists a function $r\in L^\infty_{\rm loc}\big(\R_+;H^{-s_1}(\Omega)\big)$  such that, up to an extraction,
\begin{equation} \label{cv:rho-e-trho}
\rho_\veps\,-\,1\,:=\,r_\veps\,\stackrel{*}{\rightharpoonup}\,r\qquad\qquad\mbox{ in }\qquad\qquad L^\infty_{\rm loc}\big(\R_+;H^{-s_1}(\Omega)\big)\,.
\end{equation}
Furthermore, we can write
$$ 
\rho_\veps\,u_\veps\,=\,\left(\left[\sqrt{\rho_\veps}\right]_\ess\,+\,\left[\sqrt{\rho_\veps}\right]_\res\right)\,\sqrt{\rho_\veps}\,u_\veps\,,
$$ 
where $\big(\left[\sqrt{\rho_\veps}\right]_\ess\big)_\veps$ is uniformly bounded in time and space (by definition of essential set), while
$\big(\left[\sqrt{\rho_\veps}\right]_\res\big)_\veps$ is uniformly bounded in the space $L^\infty_T(L^{2\g})$ for any $T>0$. As a consequence, after defining
$p$ such that $1/p\,=\,1/2\,+\,1/(2\g)$, by dual Sobolev embeddings we get that 
\begin{equation} \label{ub:V-2D}
\big(V_\veps\big)_\veps\,\subset\,L^\infty_T(L^2\,+\,L^p)\,\hookrightarrow\,L^\infty_T(H^{-s_2})\,,\qquad\qquad\mbox{ where }\qquad s_2\,:=\,1/\g\,.
\end{equation}
for any $T>0$ fixed.
Therefore, an easy inspection of the mass equation in \eqref{eq:sing-NSC_2D} reveals that
$\big(\d_tr_\veps\big)_\veps\,\subset\,L^\infty_T(H^{-1-s_2})$,
which immediately implies that, for any $T>0$ fixed, one has
\begin{equation} \label{ub:rho_e-trho_T}
\left(r_\veps\right)_\veps\,\subset\,W^{1,\infty}_T(H^{-1-s_2})\,.
\end{equation}
Putting \eqref{ub:rho_e-trho} and \eqref{ub:rho_e-trho_T} together and applying Ascoli-Arzel\`a theorem, we get, up to a further extraction that we omit,
the strong convergence $r_\veps\,\longrightarrow\,r$ when $\veps\ra0^+$ in the space $\mc C\big([0,T];H^{-1-s_2-\de}_{\rm loc}\big)$, for any $\de>0$.
Interpolation with the previous uniform bounds finally yields the strong convergence
\begin{equation} \label{cv:s_e}
r_\veps\,\longrightarrow\,r\qquad\qquad \mbox{ in }\qquad \mc C^{0,1-\eta}\big([0,T];H_{\rm loc}^{-s_2-1-\de+\eta(-s_1+s_2+1+\de)}\big)\,,
\end{equation}
for all $0<\eta<1$ and all $T>0$, where $\de>0$ is arbitrarily small.

From now on, whenever $\alpha=0$, we use the notation
$$
\rho_\veps(t,x)\,=\,1\,+\,r_\veps(t,x)\qquad\qquad\mbox{ and }\qquad\qquad
\rho(t,x)\,=\,1\,+\,r(t,x)\,.
$$

\subsection{Constraints on the limit} \label{ss:constraints}

In the previous part we have proved uniform bounds on the sequence of weak solutions $\bigl(\rho_\veps,u_\veps\bigr)_\veps$, which allow us to identify (up to extraction)
weak limits $(\rho,u)$. In the present subsection, we collect some properties these limit-points have to satisfy. 
We point out that these conditions do not fully characterise  the limit dynamics. 

\subsubsection{The case $0<\alpha<1$} \label{sss:constr-a}

To begin with, let us consider the case $0<\alpha<1$. We start with a simple lemma, which shows that the pressure term is of order $O(\veps^{-\alpha})$.
\begin{lemma} \label{l:p}
Let $0\leq\alpha<1$. Then we can write, in the sense of $\mc D'$,
$$
\frac{1}{\veps^{2\alpha}}\,\nabla P(\rho)\,=\,\frac{1}{\veps^\alpha}\,P'(1)\,\nabla r_\veps\,+\,\frac{1}{\veps^{2\alpha}}\,\nabla\Pi\big(\rho_\veps,1\big)\,,
$$
where we have defined $\Pi\big(\rho,1\big)\,:=\,P(\rho)\,-\,P(1)\,-\,P'(1)\,\big(\rho-1\big)$. Moreover, one has
$$
\left(\frac{1}{\veps^{2\alpha}}\,\Pi\big(\rho_\veps,1\big)\right)_\veps\,\subset\,L^\infty\big(\R_+;L^2+L^1(\Omega)\big)\,.
$$
\end{lemma}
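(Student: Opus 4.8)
The plan is to read the decomposition off a second-order Taylor expansion of $P$ about $\rho=1$, and then to prove the uniform bound by splitting $\Pi(\rho_\veps,1)$ into its essential and residual parts. For the identity itself, the starting point is the pointwise equality $P(\rho_\veps)=P(1)+P'(1)\,(\rho_\veps-1)+\Pi(\rho_\veps,1)$, which is merely the definition of $\Pi$. Since $P(\rho_\veps)\in L^1_{\rm loc}$ by Definition \ref{d:weak} and $\rho_\veps-1$ lies in $L^\infty_T(L^2+L^\g)$, each term is locally integrable, so one may take distributional gradients; as $P(1)$ is constant this yields $\nabla P(\rho_\veps)=P'(1)\,\nabla(\rho_\veps-1)+\nabla\Pi(\rho_\veps,1)$ in $\mc D'$. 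Dividing by $\veps^{2\alpha}$ and inserting $\rho_\veps-1=\veps^\alpha r_\veps$ converts the middle term into $\veps^{-\alpha}P'(1)\,\nabla r_\veps$, which is exactly the claimed formula. This step is purely algebraic and presents no difficulty.

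The real content is the bound on $\veps^{-2\alpha}\Pi(\rho_\veps,1)$. On the essential set one has $\rho_\veps\in[1/2,2]$, and since $\Pi(\cdot,1)\in\mc C^2$ near $1$ with $\Pi(1,1)=\partial_\rho\Pi(1,1)=0$ and $\partial_\rho^2\Pi=P''$, Taylor's formula gives $\big|[\Pi(\rho_\veps,1)]_\ess\big|\le C\,\big|[\rho_\veps-1]_\ess\big|^2$, with $C=\tfrac12\sup_{[1/2,2]}|P''|<\infty$. Hence
$$
\frac{1}{\veps^{2\alpha}}\,\big|[\Pi(\rho_\veps,1)]_\ess\big|\;\le\;C\,\left|\frac{[\rho_\veps-1]_\ess}{\veps^\alpha}\right|^2,
$$
and the right-hand side is bounded in $L^\infty(\R_+;L^1)$ precisely because $\veps^{-\alpha}[\rho_\veps-1]_\ess$ is bounded in $L^\infty(\R_+;L^2)$ by \eqref{ub:dens-ess}. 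Note that this essential part is naturally an $L^1$, not $L^2$, quantity, since we only control $r_\veps$ in $L^2$.

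For the residual part I would exploit the growth of the pressure encoded in \eqref{hyp:P}: since $P(z)\sim\tfrac{a}{\g}z^\g$ as $z\to+\infty$, while $\Pi(\cdot,1)$ is continuous (hence bounded) on the compact $[0,1/2]$ and the linear term is of lower order because $\g>1$, one obtains $|\Pi(\rho_\veps,1)|\le C\big(1+\rho_\veps^\g\big)$ for all $\rho_\veps\ge0$. Consequently
$$
\frac{1}{\veps^{2\alpha}}\,\big\|[\Pi(\rho_\veps,1)]_\res\big\|_{L^1}\;\le\;\frac{C}{\veps^{2\alpha}}\left(\big\|[1]_\res\big\|_{L^1}+\big\|[\rho_\veps]_\res\big\|_{L^\g}^\g\right),
$$
and \eqref{ub:dens-res} shows that both quantities on the right are $O(\veps^{2\alpha})$, so that $\veps^{-2\alpha}[\Pi(\rho_\veps,1)]_\res$ is bounded in $L^\infty(\R_+;L^1)$ as well. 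Adding the two contributions gives $\veps^{-2\alpha}\Pi(\rho_\veps,1)\in L^\infty(\R_+;L^1)\hookrightarrow L^\infty(\R_+;L^2+L^1)$, which is the assertion. The only delicate point is the residual estimate, where one must match the degenerate behaviour of $\Pi$ near $\rho_\veps=0$ and its $\g$-growth at infinity to the two distinct bounds supplied by \eqref{ub:dens-res}; the essential estimate and the identity are routine.
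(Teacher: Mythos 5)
Your proof is correct and follows essentially the same route as the paper: the distributional identity is read off the definition of $\Pi$, and the uniform bound comes from the essential/residual splitting, with a Taylor expansion handling the essential part via \eqref{ub:dens-ess} and the $\g$-growth of $P$ from \eqref{hyp:P} handling the residual part via \eqref{ub:dens-res}. If anything, your version is slightly more careful: the quadratic essential estimate $\veps^{-2\alpha}\big|[\Pi(\rho_\veps,1)]_\ess\big|\leq C\,[r_\veps]^2_\ess$, landing in $L^\infty(\R_+;L^1)$, is the cleanly justified form of the paper's one-line bound $C\,[r_\veps]_\ess$, and your single growth bound $|\Pi(\rho_\veps,1)|\leq C(1+\rho_\veps^\g)$ merges the paper's separate treatment of the regions $\{\rho_\veps<1/2\}$ and $\{\rho_\veps>2\}$, all of which is compatible with the stated target space $L^2+L^1$ and with the later use of the lemma.
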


\begin{proof}
It is enough to prove the uniform bound for the function $\Pi\big(\rho_\veps,1\big)$. For this, we resort to the decomposition into essential and residual parts.

First of all, by a Taylor expansion, we have
$$
\left|\frac{1}{\veps^{2\alpha}}\,\left[\Pi\big(\rho_\veps,1\big)\right]_\ess\right|\,\leq\,C\,\left[r_\veps\right]_\ess\,,
$$
which belongs to $L^\infty(\R_+;L^2)$ in view of \eqref{ub:dens-ess}. As for the residual part, we split it further into two parts: we have
\begin{align*}
\left|\frac{1}{\veps^{2\alpha}}\,\Pi\big(\rho_\veps,1\big)\,\mds{1}_{\{0\leq\rho_\veps<1/2\}}\right|\,\leq\,\frac{C}{\veps^{2\alpha}}\,\mds{1}_{\Omega_\res^\veps}\quad\mbox{ and }\quad
\left|\frac{1}{\veps^{2\alpha}}\,\Pi\big(\rho_\veps,1\big)\,\mds{1}_{\{\rho_\veps>2\}}\right|\,\leq\,\frac{C}{\veps^{2\alpha}}\,\left[\rho_\veps\right]_\res^\g\,.
\end{align*}
At this point, we can apply \eqref{ub:dens-res} to deduce the uniform boundedness of both terms in $L^\infty(\R_+;L^1)$.

This completes the proof of the lemma.
\end{proof}

We are now ready to state and prove the main result of the present subsection.
\begin{prop} \label{p:constr_a}
Fix $0<\alpha<1\leq\beta$.
Let $\bigl(\rho_\veps,u_\veps\bigr)_\veps$ be a sequence of weak solutions to system \eqref{eq:sing-NSC}-\eqref{eq:bc}, associated with  initial data $\bigl(\rho_{0,\veps},u_{0,\veps}\bigr)$ satisfying
the assumptions fixed in Section \ref{s:results}. Let $(r,u)$ be a limit point of the sequence $\big(r_\veps,u_\veps\big)_\veps$, as identified in Paragraph~{\ref{sss:ub-a}}.
Let $\theta$ be the quantity introduced in \eqref{cv:div-u}.

Then one has the following properties:
\begin{enumerate}[(i)]
 \item if $\beta>1$, then $\theta\equiv0$, $r\equiv0$ and $u\,=\,\big(u^h,0\big)$, where $u^h\,=\,u^h(t,x^h)$ is such that $\divh u^h\,=\,0$;
 \item in the case $\beta=1$, then $r$ and $u$ verify the same properties as above; moreover $\theta\,=\,\theta(t,x^h)$ and $u^h\,=\,-\,\nabla_h^\perp\theta$.
\end{enumerate}
\end{prop}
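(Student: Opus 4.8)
The plan is to test the weak momentum equation (condition (ii) of Definition~\ref{d:weak}) against rescaled test functions of the form $\veps^\gamma\,\psi$, with $\gamma\in\{1,\beta\}$ and $\psi\in\mc D\big([0,T[\,\times\Omega;\R^3\big)$, and to let $\veps\ra0^+$ using the uniform bounds of Subsection~\ref{ss:unif-bounds}. Since $\beta\geq1>\alpha>0$, the three singular contributions are ordered: the bulk viscosity term is the most singular (scale $\veps^{-\beta}$), then the Coriolis term (scale $\veps^{-1}$), and finally the pressure (scale $\veps^{-\alpha}$); accordingly I would peel off the constraints scale by scale. It is convenient to record at once that, by Lemma~\ref{l:p} and a Taylor expansion based on \eqref{ub:dens-ess}--\eqref{ub:dens-res}, one has $\big\|P(\rho_\veps)-P(1)\big\|_{L^\infty_T(L^2+L^1)}\leq C\,\veps^\alpha$, so the pressure contribution to any test at scale $\veps^\gamma$ is of size $O(\veps^{\gamma-\alpha})$ and disappears in the limit whenever $\gamma>\alpha$. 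Likewise $\div u_\veps=\veps^\beta\,\big(\veps^{-\beta}\div u_\veps\big)\ra0$ strongly in $L^2\big(\R_+;L^2\big)$ by \eqref{cv:div-u}, whence $\div u=0$.

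To obtain the structure of $u$, I would first test against $\veps\,\psi$ with $\div\psi=0$: then the pressure and bulk terms vanish identically, while the time, convective and viscous terms are $O(\veps)$, so only the Coriolis term survives and the limit reads $\int\!\!\int e^3\times u\cdot\psi\,dx\,dt=0$ for every solenoidal $\psi$ (using $\rho_\veps u_\veps\rightharpoonup u$ from \eqref{cv:rho-u}). Hence $e^3\times u=\nabla q$ for some potential $q$; reading the three components of this identity, the vertical one gives $\d_3q=0$, so $q=q(t,x^h)$, while the horizontal ones give $u^h=-\nabla_h^\perp q$. Therefore $u^h=u^h(t,x^h)$ and $\divh u^h=0$. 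Combining $\div u=0$ with $\divh u^h=0$ yields $\d_3u^3=0$; since $u^3$ is odd in $x^3$ (Remark~\ref{r:period-bc}), this forces $u^3\equiv0$, i.e. $u=\big(u^h,0\big)$. This argument is insensitive to the precise value of $\beta\geq1$.

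The quantity $\theta$ is captured at the divergence scale. When $\beta>1$ I would test against $\veps^\beta\psi$ with general $\psi$: the Coriolis term is $O(\veps^{\beta-1})$ and vanishes, as do the pressure ($O(\veps^{\beta-\alpha})$) and the remaining $O(\veps^\beta)$ terms, so only the bulk term survives and the limit gives $\int\!\!\int\theta\,\div\psi\,dx\,dt=0$, i.e. $\nabla\theta=0$. As $\theta\in L^2\big(\R_+;L^2(\Omega)\big)$ and $\Omega=\R^2\times\T^1$, a spatially constant $L^2$ function must vanish, so $\theta\equiv0$. When instead $\beta=1$, bulk and Coriolis are of the same order; testing against $\veps\,\psi$ with general $\psi$ now retains both, and the limit yields $\int\!\!\int\big(e^3\times u\cdot\psi+\theta\,\div\psi\big)\,dx\,dt=0$, that is $e^3\times u=\nabla\theta$ in $\mc D'$. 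Its vertical component gives $\d_3\theta=0$, hence $\theta=\theta(t,x^h)$, and its horizontal components give $u^h=-\nabla_h^\perp\theta$, as claimed.

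The identity $r\equiv0$ is the hard point, and the only place where the extra smallness of the initial density enters. Writing $\rho_\veps-1=\veps^\alpha r_\veps$ and $\div u_\veps=\veps^\beta\theta_\veps$ in the continuity equation gives $\d_t r_\veps+\div\big(r_\veps u_\veps\big)=-\veps^{\beta-\alpha}\theta_\veps$. Here $\big(r_\veps\big)_\veps$ is bounded in $L^\infty_T(L^2)$ (up to the residual part, which tends to $0$), while this identity shows $\big(\d_t r_\veps\big)_\veps$ bounded in $L^2_T(H^{-s}_{\rm loc})$ for some $s>0$; crucially the apparent factor $\veps^{-\alpha}$ is cancelled by the $\veps^\alpha$ carried by $\rho_\veps-1$. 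By the Aubin--Lions--Simon lemma $r_\veps\ra r$ strongly in $\mc C\big([0,T];H^{-s'}_{\rm loc}\big)$, so that $r_\veps u_\veps\rightharpoonup r\,u$ (product of a strongly and a weakly convergent sequence). Passing to the limit, and using $\veps^{\beta-\alpha}\ra0$ together with $\div u=0$, the limit $r$ solves the transport equation $\d_t r+u\cdot\nabla r=0$ with datum $r_{|t=0}=0$; the latter holds because $r_\veps(0)=\veps^{1-\alpha}r_{0,\veps}\ra0$ thanks to the assumption $\rho_{0,\veps}=1+\veps\,r_{0,\veps}$ with $\alpha<1$. Since $u\in L^2_T(H^1)$ is divergence free, DiPerna--Lions renormalisation applies and $\tfrac{d}{dt}\big\|r(t)\big\|_{L^2}^2=0$, whence $\big\|r(t)\big\|_{L^2}=\big\|r(0)\big\|_{L^2}=0$ and $r\equiv0$. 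I expect this step to be the main obstacle: the pressure sits at the mildest singular scale $\veps^{-\alpha}$ and is always dominated by the Coriolis and bulk terms, so $r\equiv0$ cannot be read off any singular balance and has to be obtained dynamically, via the limiting transport equation, its vanishing initial datum and an $L^2$-conservation (uniqueness) argument.
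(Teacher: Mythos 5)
Your proof is correct and follows essentially the same route as the paper's: the same scale-by-scale testing of the momentum equation against $\veps\,\psi$ (solenoidal and general) and $\veps^\beta\,\psi$, and the same rewriting of the mass equation as $\d_tr_\veps+\veps^{\beta-\alpha}\theta_\veps+\div(r_\veps u_\veps)=0$ leading to a transport equation for $r$ with vanishing initial datum. Your only departures are to spell out steps the paper leaves implicit --- the Aubin--Lions compactness justifying $r_\veps u_\veps\rightharpoonup r\,u$, and the DiPerna--Lions renormalisation giving uniqueness for the limit transport equation --- which are welcome elaborations rather than a different argument.
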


\begin{proof}
First of all, from \eqref{ub:div-u}, we immediately infer that
\begin{equation} \label{eq:div-u}
\div u\,=\,0\,. 
\end{equation}

Next, let us consider the momentum equation and focus for a while on the case $\beta>1$.
Since in this case the most singular term is of order $O(\veps^\beta)$, recall also Lemma \ref{l:p} above,
by testing the momentum equation against $\veps^\beta\,\psi$, where $\psi\in\mc D\big([0,T[\,\times\Omega;\R^3\big)$, and letting $\veps\ra0^+$, we easily gather
$\nabla\theta\,=\,0$.
Then $\theta=\theta(t)$, but it has to belong to $L^2(\Omega)$ for almost every time, hence $\theta\equiv0$.

In order to see the contribution of the rotation term, we project the equation onto its divergece-free component. Namely, we test the momentum equation
against $\veps\,\psi$, where $\psi\in\mc D\big([0,T[\,\times\Omega;\R^3\big)$ is a test function verifying $\div \psi=0$: by uniform bounds and \eqref{cv:rho-u}, it is straightforward to
get, in the limit $\veps\ra0^+$, that
$$
e^3\times u\,=\,\nabla\pi\,,
$$
for a suitable distribution $\pi\in L^2_T(\dot H^1)$. From this relation, it is a routine matter to
deduce that
$$
\d_3u^h\,=\,0\qquad\qquad\mbox{ and }\qquad\qquad \divh u^h\,=\,0\,.
$$
From the latter property and the fact that $\div u=0$, we immediately infer that $\d_3u^3=0$, which implies $u^3\equiv0$ in view of the boundary conditions \eqref{eq:bc}.

We now consider the mass equation, which we rewrite as
$$
\d_tr_\veps\,+\,\veps^{\beta-\alpha}\left(\veps^{-\beta}\div u_\veps\right)\,+\,\div\big(r_\veps\,u_\veps\big)\,=\,0
$$
in view of \eqref{cv:r_ess} and \eqref{cv:r_res}. Notice that the initial datum for this equation is $(r_\veps)_{|t=0}\,=\,\veps^{1-\alpha}\,r_{0,\veps}$, which obviously converges to $0$
when $\veps\ra0^+$, in view of the assumptions on the initial data. Using \eqref{cv:r_ess} and \eqref{cv:r_res} again, together with \eqref{cv:u}
and the constraints already established on $u$, we can pass to the limit in the previous equation to get
$$
\d_tr\,+\,\divh\big(r\,u^h\big)\,=\,0\,.
$$
Then, the limit quantity $r$ is transported by $u^h$, which is a divergece-free vector field; since the initial datum is $0$, we deduce that $r(t)=0$ for all times.

\medbreak
Take now $\beta=1$: in this case, the Coriolis term and the bulk viscosity term are singular at the same order. Recall that, by Lemma \ref{l:p}, the pressure term is of order $O(\veps^\alpha)$,
so it is of lower order. Hence, taking $\veps\,\psi$, with $\psi\in\mc D\big([0,T[\,\times\Omega\big)$, as a test function in the momentum equation and passing to the limit for $\veps\ra0^+$,
we find
$$
e^3\times u\,-\,\nabla\theta\,=\,0\,,
$$
which implies that $\theta\,=\,\theta(t,x^h)$, whence $u^h\,=\,u^h(t,x^h)$ and $u^h\,=\,-\nabla_h^\perp\theta$. Once these relations have been obtained, the rest of the analysis
follows the same steps as before.
\end{proof}

\begin{rem} \label{r:beta}
The same argument used in the previous proof actually shows that the limit is trivial whenever $0\leq\beta<1$.

Indeed, on the one hand relation \eqref{eq:div-u} still holds true, by \eqref{ub:div-u} when $\beta>0$, or by passing to the limit in the mass equation when $\beta=0$.
On the other hand, if $0\leq\beta<1$, the most singular term in the momentum equation
is the Coriolis term: we then infer that $u^h\equiv0$ in the limit. So, \eqref{eq:div-u} tells us that $\d_3u^3\equiv0$, which finally imples $u^3=0$ as well.
\end{rem}

The property $r\equiv0$ may look strange, but actually there is a deep reason for it, which will be apparent in Subsection \ref{ss:further}.

\subsubsection{The case $\alpha=0$} \label{sss:constr-0}
We now treat the case $\alpha=0$. Our first concern is to establish the convergence of the products $\rho_\veps\,u_\veps$, since, contrary to the previous paragraph, we have no more smallness on
$\rho_\veps-1$.
\begin{lemma} \label{l:rho-u}
Let $\alpha=0$ and $\g>1$ in \eqref{hyp:P}.
Let $\bigl(\rho_\veps,u_\veps\bigr)_\veps$ be a sequence of weak solutions to system \eqref{eq:sing-NSC_2D}, associated  with initial data~$\bigl(\rho_{0,\veps},u_{0,\veps}\bigr)$ satisfying
the assumptions fixed in Section~{\ref{s:results}}. With the same notation introduced in Paragraph \ref{sss:ub-0}, let $(r,u)$ be a limit point of the sequence
$\bigl(r_\veps,u_\veps\bigr)_\veps$.

Then the product $\big(r_\veps\,u_\veps\big)_\veps$ converges to $r\,u$ in the weak topology of $L^2_T(H_{\rm loc}^{-s_1-\de})$, for all $\de>0$ arbitrarily small (and such that $s_1+\de<1$).
In particular, the product $\bigl(\rho_\veps\,u_\veps\bigr)_\veps$ converges to $\rho\,u$ in the distributional sense.
\end{lemma}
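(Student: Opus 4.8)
The plan is to exploit the mismatch between the merely weak convergence $u_\veps\rightharpoonup u$ from \eqref{cv:u} and the genuinely \emph{strong} convergence of the density fluctuations recorded in \eqref{cv:s_e}, which is precisely the extra information extracted from the mass equation. Writing $\rho_\veps=1+r_\veps$, the natural decomposition is
$$
r_\veps\,u_\veps\,-\,r\,u\,=\,\big(r_\veps-r\big)\,u_\veps\,+\,r\,\big(u_\veps-u\big)\,,
$$
and I would show that both summands tend to $0$ in $\mc D'\big(\,]0,T[\,\times\R^2\big)$. Since, as I explain next, the family $\big(r_\veps u_\veps\big)_\veps$ is bounded in $L^2_T(H^{-s_1-\de}_{\rm loc})$, identifying the limit in the sense of distributions is enough to upgrade the convergence to the weak convergence claimed in the statement.

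First I would establish the uniform bound. By \eqref{ub:rho_e-trho} the sequence $\big(r_\veps\big)_\veps$ is bounded in $L^\infty_T(H^{-s_1})$, while $\big(u_\veps\big)_\veps$ is bounded in $L^2_T(H^1)$ by \eqref{cv:u}. Applying Corollary \ref{c:product-2} (part (ii)) with $\eta=-s_1\in\,]-1,1[\,$ — this is where the hypothesis $s_1+\de<1$ is used — shows that the product maps $H^{-s_1}\times H^1$ continuously into $H^{-s_1-\de}$, whence $\big(r_\veps u_\veps\big)_\veps$ is bounded in $L^2_T(H^{-s_1-\de})$. Next I would upgrade the strong convergence of the densities: interpolating the strong convergence $r_\veps\ra r$ in $\mc C\big([0,T];H^{-1-s_2-\de}_{\rm loc}\big)$ coming from \eqref{cv:s_e} against the uniform bound in $L^\infty_T(H^{-s_1})$, and using that $-1$ lies strictly between $-1-s_2-\de$ and $-s_1$ precisely because $s_1<1$ (which holds for all $\g>1$), I obtain the strong convergence $r_\veps\ra r$ in $\mc C\big([0,T];H^{-1}_{\rm loc}\big)$.

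With these two ingredients the two terms are handled separately. Fix a test function $\psi\in\mc D\big(\,]0,T[\,\times\R^2;\R^2\big)$ and set $K=\Supp\psi$. For the term $r\,\big(u_\veps-u\big)$ I write $\int r\,(u_\veps-u)\cdot\psi=\langle u_\veps-u,\,r\psi\rangle$; since multiplication by the smooth, compactly supported $\psi$ preserves $H^{-s_1}$ and $s_1\leq1$, one has $r\psi\in L^2_T(H^{-1})=\big(L^2_T(H^1)\big)'$, so this pairing vanishes in the limit by \eqref{cv:u}. For the term $\big(r_\veps-r\big)u_\veps$ I estimate
$$
\left|\int^T_0\!\!\int_{\R^2}\big(r_\veps-r\big)\,\big(u_\veps\cdot\psi\big)\,dx\,dt\right|\,\leq\,
\big\|r_\veps-r\big\|_{L^\infty_T(H^{-1}(K))}\,\big\|u_\veps\,\psi\big\|_{L^1_T(H^1)}\,,
$$
where the first factor tends to $0$ by the strong convergence just obtained, and the second stays bounded because $\psi$ is smooth with compact support and $\big(u_\veps\big)_\veps$ is bounded in $L^2_T(H^1)$. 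Combining the two gives $r_\veps u_\veps\rightharpoonup r\,u$ in $L^2_T(H^{-s_1-\de}_{\rm loc})$, and the final assertion follows at once from $\rho_\veps u_\veps=u_\veps+r_\veps u_\veps$ together with \eqref{cv:u}.

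The main obstacle is exactly the regularity mismatch: $r_\veps$ only lives in negative Sobolev spaces, so the product with the weakly convergent $u_\veps$ cannot be passed to the limit by any soft weak--weak argument. The crux is therefore the strong (compact) convergence furnished by the mass equation in \eqref{cv:s_e}, together with the interpolation step that transports it into $H^{-1}_{\rm loc}$, the dual of the space in which $u_\veps$ is bounded; this is what turns the delicate product into a tractable strong--weak pairing.
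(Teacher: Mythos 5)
Your proof is correct and follows essentially the same route as the paper: uniform bounds on $\big(r_\veps u_\veps\big)_\veps$ in $L^2_T(H^{-s_1-\de}_{\rm loc})$ via the product laws of Corollary \ref{c:product-2}, strong convergence of $\big(r_\veps\big)_\veps$ obtained by interpolation from \eqref{cv:s_e}, and a strong--weak argument against the weak convergence \eqref{cv:u}. The only (harmless) difference is one of execution: you interpolate to $\mc C_T(H^{-1}_{\rm loc})$ and conclude by an explicit $H^{-1}$--$H^1$ duality pairing for the decomposition $(r_\veps-r)u_\veps+r(u_\veps-u)$, whereas the paper interpolates to $\mc C_T(H^{-s_1-\de}_{\rm loc})$ and invokes the bilinear continuity of the product once more.
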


\begin{proof}
Notice that, since $\g>1$, then $s_1$ in \eqref{ub:rho_e-trho} is always smaller than $1$. In addition, in view of Corollary \ref{c:product-2}, the product is continuous
from $H^{-\eta}\times H^1\ra H^{-\eta-\de}$ for any $0<\eta<1$ and $\de>0$ arbitrarily small: this property, together with
\eqref{est:u-L^2} and \eqref{ub:rho_e-trho} implies that $\big(r_\veps\,u_\veps\big)_\veps$ is uniformly bounded in $L^2_T(H^{-s_1-\de}_{\rm loc})$ for any $\de>0$ small and such that, in addition,
$s_1+\de<1$. On the other hand, taking $\eta$ close enough to $1$ in \eqref{cv:s_e}, we get that $\big(r_\veps\big)_\veps$ is strongly convergent
in $\mc C_T(H^{-s_1-\de}_{\rm loc})$, while $\big(u_\veps\big)_\veps$ is weakly convergent in $L^2_T(H^1)$: using again the continuity properties of the product
on those spaces yields the result.
\end{proof}

After the previous preliminary result, we can prove the analogous of Proposition \ref{p:constr_a}.
\begin{prop} \label{p:constr_0}
Let the space domain be $\R^2$. Set $\g>1$ in \eqref{hyp:P} and fix $\alpha=0$ and $\beta\geq1$.
Let $\bigl(\rho_\veps,u_\veps\bigr)_\veps$ be a sequence of weak solutions to system \eqref{eq:sing-NSC_2D}, associated with  initial data $\bigl(\rho_{0,\veps},u_{0,\veps}\bigr)$ satisfying
the assumptions fixed in Section \ref{s:results}. Let $(r,u)$ be a limit point of the sequence $\big(r_\veps,u_\veps\big)_\veps$, as identified in Paragraph~{\ref{sss:ub-0}}.
Let $\theta$ be the quantity introduced in \eqref{cv:div-u}.

Then the following properties hold true:
\begin{enumerate}[(i)]
 \item if $\beta>1$, then $\theta\equiv0$ and $r\equiv0$, while $u$ verifies $\div u\,=\,0$;
 \item in the case $\beta=1$, then $r$ and $u$ verify the same properties as above; moreover $\theta$ and $u$ are linked by the relation $u\,=\,-\,\nabla^\perp\theta$.
\end{enumerate}
\end{prop}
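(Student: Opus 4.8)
The plan is to follow the proof of Proposition~\ref{p:constr_a} closely, but to reorganise the order of the arguments. The point is that, when $\alpha=0$, no smallness is available on $\rho_\veps-1$, so the weak limit of the momentum $\rho_\veps u_\veps$ equals $\rho\,u=(1+r)\,u$ by Lemma~\ref{l:rho-u}, and \emph{not} simply $u$; I would therefore first establish $r\equiv0$, and only afterwards use this fact when passing to the limit in the Coriolis term. The divergence constraint is immediate: from \eqref{ub:div-u} one has $\div u_\veps=O(\veps^\beta)$ in $L^2\big(\R_+;L^2\big)$, hence $\div u_\veps\to0$, and comparison with \eqref{cv:u} yields $\div u=0$ for every $\beta\geq1$.

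The heart of the matter is $r\equiv0$, and it is here that the restriction to $d=2$ enters, through Lemma~\ref{l:rho-u}. I would rewrite the continuity equation in \eqref{eq:sing-NSC_2D} as $\d_t r_\veps+\div u_\veps+\div\big(r_\veps u_\veps\big)=0$ and pass to the limit in its weak formulation. The linear term $\div u_\veps$ vanishes as above; the nonlinear term $\div\big(r_\veps u_\veps\big)$ converges to $\div(r\,u)$ precisely by Lemma~\ref{l:rho-u}; and, since the initial perturbation satisfies $r_\veps|_{t=0}=\veps\,r_{0,\veps}\to0$, the limit $r$ solves $\d_t r+\div(r\,u)=0$ with vanishing initial datum. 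Because $\div u=0$, this is the equation of transport of $r$ along the divergence-free field $u$, issued from $0$; arguing as in Proposition~\ref{p:constr_a} we conclude $r\equiv0$, and in particular $\rho_\veps u_\veps\rightharpoonup u$.

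It then remains to exploit the singular part of the momentum equation, distinguishing the two regimes. When $\beta>1$, the bulk-viscosity term dominates: testing the weak momentum equation against $\veps^\beta\,\psi$, with $\psi\in\mc D\big([0,T[\,\times\R^2;\R^2\big)$, and letting $\veps\to0^+$, all contributions vanish except the bulk one, which by \eqref{cv:div-u} tends to $\int_0^T\!\!\int\theta\,\div\psi$ (the Coriolis contribution being now $O(\veps^{\beta-1})$); this gives $\nabla\theta=0$, and since $\theta(t,\cdot)\in L^2(\R^2)$ for a.e.\ $t$ we deduce $\theta\equiv0$. When $\beta=1$, the Coriolis and bulk terms are singular at the same order: testing against $\veps\,\psi$ and passing to the limit, now using $\rho_\veps u_\veps\rightharpoonup u$ together with \eqref{cv:div-u}, one obtains $\int_0^T\!\!\int\big(u^\perp\cdot\psi+\theta\,\div\psi\big)=0$ for every such $\psi$, that is $u^\perp=\nabla\theta$ in $\mc D'$, equivalently $u=-\nabla^\perp\theta$.

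The main obstacle is the identity $r\equiv0$. Unlike the regime $\alpha>0$, where the smallness of $\rho_\veps-1$ trivialises this step, here $r$ lives only in negative Sobolev spaces while $u$ is merely $L^2_T(H^1)$, so two delicate points must be handled with care: the passage to the limit in the quadratic term $r_\veps u_\veps$, which is exactly the content of Lemma~\ref{l:rho-u} and the sole reason for restricting to planar flows, and the uniqueness for the resulting low-regularity transport problem that is needed to turn the vanishing initial datum into $r\equiv0$.
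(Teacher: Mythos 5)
Your proposal is correct and takes essentially the same route as the paper: $\div u=0$ directly from \eqref{ub:div-u}, then $r\equiv0$ by passing to the limit in the weak formulation of the mass equation via Lemma \ref{l:rho-u} and exploiting that $r$ is transported by the divergence-free field $u$ from a vanishing initial datum, and finally the constraints on $\theta$ by testing the momentum equation against $\veps^\beta\,\psi$ (resp. $\veps\,\psi$ when $\beta=1$), exactly as in Proposition \ref{p:constr_a}. The reorganisation you emphasise --- establishing $r\equiv0$ \emph{before} taking the limit of the Coriolis term, since with $\alpha=0$ one only knows $\rho_\veps u_\veps\rightharpoonup\rho\,u$ rather than $\rho_\veps u_\veps\rightharpoonup u$ --- is precisely the order followed in the paper's own proof, which treats the mass equation first and then refers back to the argument of the case $\alpha>0$.
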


\begin{proof}
As it was the case when $\alpha>0$, the uniform bound \eqref{ub:div-u} implies again that $u$ is divergence-free. On the other hand, Lemma \ref{l:rho-u} allows us to pass to the limit in
the weak formulation of the mass equation: we get
$$
\d_tr\,+\,\div\big(r\,u\big)\,=\,0\,,\qquad\qquad\mbox{ with }\qquad r_{|t=0}\,=\,0\,.
$$
Equivalently, $\rho$ solves $\d_t\rho\,+\,\div\big(\rho\,u\big)\,=\,0$, with initial datum $\rho_{|t=0}\,=\,1$.
Since $\div u=0$, we infer that $\rho\equiv1$ for all times, i.e. $r(t)\equiv0$ for all $t\geq0$.

This having been established, the rest of the proof works exactly as in the case of Proposition \ref{p:constr_a}. We omit to give the details.
\end{proof}

\subsection{Further properties for the density oscillations} \label{ss:further}

The fact that the density oscillations $r_\veps$ completely disapper in the limit process, see Propositions \ref{p:constr_a} and \ref{p:constr_0} above, suggests that
the decomposition $\rho_\veps\,=\,1+\veps^\alpha r_\veps$ is maybe too rough. More precisely, the idea is that the perturbations of the reference state $\trho=1$ are of order higher than $\veps^\alpha$,
and seeing density variations in the limit requires to find the right order of those terms.

The goal of the present subsection is to show that this insight is indeed correct.

\paragraph{General considerations.}
For proving the previous claim, we start by defining
$$
V_\veps\,:=\,\rho_\veps\,u_\veps\,,\qquad \theta_\veps\,:=\,\frac{1}{\veps^\beta}\,\div u_\veps\qquad\mbox{ and }\qquad
f_\veps\,:=\,-\,\frac{1}{\veps^{2\alpha}}\,\nabla\Pi\big(\rho_\veps,1\big)\,+\,\mu\,\Delta u_\veps\,-\,\div\big(\rho_\veps u_\veps\otimes u_\veps\big)\,.
$$
Notice that, when proving \eqref{cv:rho-u}, we have already established that, for all $T>0$, one has
\begin{equation} \label{ub:V_e}
 \big(V_\veps\big)_\veps\,\subset\,L^2_T(L^2+L^{3/2}+L^{6\g/(6+\g)})\,\hookrightarrow\,L^2_T\big(L^2+H^{-1/2}+H^{-m}\big)\,,
\end{equation}
with $m\,=\,1\,-\,3/\g$.
Moreover, from \eqref{ub:u-Du}, \eqref{ub:div-u} and Lemma \ref{l:p}, we gather the uniform bounds
\begin{equation} \label{ub:f_e}
\big(\theta_\veps\big)_\veps\,\subset\,L^2_T(L^2)\qquad\qquad\mbox{ and }\qquad\qquad \big(f_\veps\big)_\veps\,\subset\,L^2_T(H^{-s})\quad \forall\;s>5/2\,,
\end{equation}
for all $T>0$ fixed. Remark that Lemma \ref{l:p} holds true up to the endpoint case $\alpha=0$ included, with no modifications in the proof. Finally, we introduce
$$
\sigma_\veps\,:=\,\frac{1}{\veps}\,\big(\rho_\veps\,-\,1\big)\,=\,\frac{1}{\veps^{1-\alpha}}\,r_\veps\,.
$$
We stress the fact that no uniform bounds are available, for the moment, for the sequence of $\sigma_\veps$'s.

With the previous notations, system \eqref{eq:sing-NSC} can be written as the following wave system:
\begin{equation} \label{eq:waves}
\left\{\begin{array}{l}
        \veps\,\d_t\s_\veps\,+\,\div V_\veps\,=\,0 \\[1ex]
        \veps^\beta\,\d_tV_\veps\,-\,\nabla\theta_\veps\,+\,\veps^{\beta-\alpha}\,P'(1)\,\nabla r_\veps\,+\,\veps^{\beta-1}\,e^3\times V_\veps\,=\,\veps^\beta\,f_\veps\,,
       \end{array}
\right.
\end{equation}
which has to be meant in the weak sense. Taking the $\curlh$ of the second equation and dividing by $\veps^{\beta-1}$ yields
\begin{equation} \label{eq:eta^3}
\veps\,\d_t\curlh V_\veps^h\,+\,\divh V_\veps^h\,=\,\veps\,\curlh f^h_\veps\,.
\end{equation}
At this point, we subtract the first equation in \eqref{eq:waves} from this latter relation and we compute the average with respect to $x^3$, to get
\begin{equation} \label{eq:curlV-sigma}
\d_t\big(\curlh\lan V^h_\veps\ran\,-\,\lan\s_\veps\ran\big)\,=\,\curlh\lan f^h_\veps\ran\,.
\end{equation}
This equation, together with the assumptions on the initial data and \eqref{ub:V_e}-\eqref{ub:f_e}, implies that
$$
\big(\lan\s_\veps\ran\big)_\veps\,\subset\,L^2_T(H^{-s-1})\qquad\forall\;T>0\,,\quad\forall\;s>5/2\,,
$$
and then there exists some $\s\,=\,\s(t,x^h)$ belonging to that space such that, up to an extraction, one has
\begin{equation} \label{cv:s}
\lan\s_\veps\ran\,\rightharpoonup\,\s\qquad\qquad\mbox{ in }\qquad L^2_T(H^{-s-1})
\end{equation}
for all $T>0$ and all $s>5/2$.
Let us point out that \eqref{eq:curlV-sigma} also gives compactness in time for the sequence $\big(\curlh\lan V_\veps^h\ran\,-\,\lan\s_\veps\ran\big)_\veps$.


Before going on, let us spend a few more words on the case $\alpha=0$.

\paragraph{The particular case when $\alpha=0$.}
It goes without saying that the previous argument holds true also when $\alpha=0$, with slight modifications. More precisely, since in that case the space domain is $\R^2$,
property \eqref{ub:f_e} as well as \eqref{cv:s} hold true for any $s>2$. Of course, there is no more need to take the vertical averages, so that \eqref{eq:curlV-sigma} and \eqref{cv:s}
are valid for the whole sequences of $\curl V_\veps$, $\s_\veps$ and $f_\veps$.

However, for later analysis (see Subsection \ref{ss:small} below) it is convenient to skimp on time integrability for the functions $\sigma_\veps$.
Indeed, we remark that property \eqref{ub:V-2D} implies that $\big(\curl V_\veps\big)_\veps$ is uniformly bounded in $L^\infty_T(H^{-s_2})$. Since $s_2\,=\,1/\g<1$,
from \eqref{eq:curlV-sigma} again we get that (recall that this time the space dimension is $2$)
\begin{equation} \label{ub:s_2D}
\big(\s_\veps\big)_\veps\,\subset\,L^\infty_T(H^{-s-1})\,,\qquad\qquad \forall\;T>0\,,\quad \forall\;s>2\,.
\end{equation}
Hence, there exists a distribution $\sigma\in L^\infty_{\rm loc}\big(\R_+;H^{-3-\de}\big)$, for all $\de>0$, such that, up to the extraction of a subsequence,
\begin{equation} \label{cv:s_2D}
\s_\veps\,\stackrel{*}{\rightharpoonup}\,\s\qquad\qquad\mbox{ in }\qquad L^\infty_T(H^{-3-\de})
\end{equation}
for all $T>0$ and all $\de>0$ arbitrarily small.

\section{Passing to the limit in the case $0<\alpha<1$} \label{s:limit-a}
We complete here the proof to Theorem \ref{t:alpha}, performing the limit in the weak formulation of equations \eqref{eq:sing-NSC} when $0<\alpha<1$
and $\beta\geq1$.

We have already seen in the proof to Proposition \ref{p:constr_a} how passing to the limit in the mass equation, and why this does not give any information on the limit dynamics. On the other hand,
the properties established in Subsection \ref{ss:further} are too rough to be able to prove convergence in the equation for $\s_\veps$: first of all,
we have uniform bounds only on their vertical averages, and moreover those bounds are in spaces which are too negative for giving sense to the product $\s_\veps\,u_\veps$ and take the limit
in that sequence.

Therefore, let us focus only on the momentum equation. Notice however that we will need to exploit the analysis of Subsection \ref{ss:further} in order to pass to
the limit in the Coriolis term.

\subsection{First convergence results} \label{ss:first-conv}
Let us consider a test function $\psi\in\mc D\big([0,T[\,\times\Omega;\R^3\big)$ such that $\psi\,=\,\big(\nabla^\perp_h\vphi,0\big)$, for some smooth and compactly supported
$\vphi\,=\,\vphi(t,x^h)$. We take the weak formulation of the momentum equation in \eqref{eq:sing-NSC} against such a $\psi$: we get
\begin{align} \label{eq:weak-for-limit}
\int^T_0\!\!\!\int_\Omega\left(-\rho_\veps u_\veps\cdot\d_t\psi-\rho_\veps u_\veps\otimes u_\veps:\nabla\psi+\dfrac{1}{\veps}e^3\times\rho_\veps u_\veps\cdot\psi+
\mu\nabla u_\veps:\nabla\psi\right)=
\int_\Omega\rho_{0,\veps}u_{0,\veps}\cdot\psi(0)\,,
\end{align}
due to the fact that $\div\psi=0$. Notice that, by hypotheses on the initial data and properties \eqref{cv:in-data}, we immediately gather
$$
\int_\Omega\rho_{0,\veps}u_{0,\veps}\cdot\psi(0)\,\longrightarrow\,\int_\Omega u_{0}\cdot\psi(0)\,=\,\int_{\R^2}\lan u^h_{0}\ran\cdot\psi^h(0)\,,
$$

On the other hand, the convergence of the viscosity term presents no difficulty, since it is linear in $u_\veps$. Moreover, the convergence 
of the $\d_t$ term follows from \eqref{cv:rho-u}: we get 
$$
-\int^T_0\!\!\!\int_\Omega\rho_\veps\,u_\veps\cdot\d_t\psi\,\longrightarrow\,-\int^T_0\!\!\!\int_{\R^2}u^h\cdot\d_t\psi^h\quad\mbox{ and }\quad
\int^T_0\!\!\!\int_\Omega\nabla u_\veps:\nabla\psi\,\longrightarrow\,\int_0^T\!\!\!\int_{\R^2}\nabla_h u^h:\nabla_h\psi^h\,.
$$

Finally, let us pass to the limit in the rotation term: recalling that $\psi\,=\,\big(\nabla^\perp_h\vphi,0\big)$,
by use of the mass equation, it is easy to obtain
\begin{align*}
\frac{1}{\veps}\int^T_0\!\!\!\int_\Omega e^3\times\rho_\veps\,u_\veps\cdot\psi\,&=\,\frac{1}{\veps}\int^T_0\!\!\!\int_\Omega\rho_\veps\,\big(u_\veps^h\big)^\perp\cdot\nabla^\perp_h\vphi\;=\;
\frac{1}{\veps}\int^T_0\!\!\!\int_{\R^2}\lan\rho_\veps\,u_\veps^h\ran\cdot\nabla\vphi \\
&=\,-\int_0^T\!\!\!\int_{\R^2}\lan\s_\veps\ran\,\d_t\vphi\,-\,\int_{\R^2}\lan r_{0,\veps}\ran\,\vphi\,.
\end{align*}
Hence, in view of the convergence properties \eqref{cv:in-data} and \eqref{cv:s}, one gathers
$$
\int^T_0\!\!\!\int_\Omega e^3\times\rho_\veps\,u_\veps\cdot\psi\,\longrightarrow\,-\int_0^T\!\!\!\int_{\R^2}\s\,\d_t\vphi\,-\,\int_{\R^2}\lan r_{0}\ran\,\vphi\,.
$$

\begin{rem} \label{r:vort-limit}
Notice that the previous argument, which seems to be necessary in order to take the limit of the rotation term, forces us to make the scalar function $\vphi$ appear
as a test function in the weak formulation of the limit equations. In other terms, we are obliged to consider the vorticity formulation of the limit dynamics.
\end{rem}

Therefore, in order to complete the passage to the limit, and then the proof of Theorem \ref{t:alpha}, it remains to us to prove the convergence of the convective term
$\rho_\veps u_\veps\otimes u_\veps$: this is the goal of the next subsection, where we resort to a compensated compactness argument,
combined with the decay estimates of Subsection \ref{app:heat}.

\subsection{The limit of the convective term} \label{ss:convective}

In this subsection, we show how taking the limit in the convective term. First of all, we reduce our problem to proving convergence in a convective term where the density function
is equal to $1$ and the velocity fields are smooth with respect to the space variable. Then, we apply a compensated compactness argument and exploit the system of wave equations \eqref{eq:waves}
in order to passing to the limit.

\subsubsection{Approximation and regularisation} \label{sss:app-reg}

The first step in passing to the limit in the convective term is the following approximation lemma.
\begin{lemma} \label{l:1_conv}
For any test function $\psi\,\in\,\mc{D}\bigl([0,T[\,\times\Omega;\R^3\bigr)$, one has
$$
\lim_{\veps\ra0^+}\left|\int^T_0\int_\Omega\rho_\veps u_\veps\otimes u_\veps:\nabla\psi\,dx\,dt\,-\,\int^T_0\int_\Omega u_\veps\otimes u_\veps:\nabla\psi\,dx\,dt\right|\,=\,0\,.
$$
\end{lemma}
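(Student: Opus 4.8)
The plan is to recast the difference of the two integrals as a single integral weighted by the density fluctuation, and then to show that it vanishes by combining the smallness of $\rho_\veps-1$ with the uniform $H^1$ control on the velocities. Since $\rho_\veps\,u_\veps\otimes u_\veps-u_\veps\otimes u_\veps=\big(\rho_\veps-1\big)\,u_\veps\otimes u_\veps$, the quantity to estimate is
\begin{equation*}
\int^T_0\!\!\int_\Omega\big(\rho_\veps-1\big)\,u_\veps\otimes u_\veps:\nabla\psi\,dx\,dt\,.
\end{equation*}
First I would insert the decomposition \eqref{eq:rho-decomp}, namely $\rho_\veps-1=\rho_\veps^{(1)}+\rho_\veps^{(2)}$, and treat the two resulting contributions separately, exploiting that $\rho_\veps^{(1)}\longrightarrow0$ in $L^\infty_T(L^2)$ while $\rho_\veps^{(2)}\longrightarrow0$ in $L^\infty_T(L^\g)$.

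The velocity information I would use is the uniform bound for $\big(u_\veps\big)_\veps$ in $L^2_T(H^1)$, which follows from \eqref{ub:u-Du} and \eqref{ub:u}. By the Sobolev embedding $H^1(\Omega)\hra L^6(\Omega)$ (here $d=3$), this gives $\big(u_\veps\big)_\veps$ uniformly bounded in $L^2_T(L^6)$, hence $\big(|u_\veps|^2\big)_\veps$ uniformly bounded in $L^1_T(L^3)$, with $\int^T_0\big\||u_\veps|^2\big\|_{L^3}\,dt=\|u_\veps\|^2_{L^2_T(L^6)}\leq C$. For the essential part, H\"older's inequality in space with exponents $(2,3,6)$ yields
\begin{equation*}
\int_\Omega\big|\rho_\veps^{(1)}\big|\,|u_\veps|^2\,|\nabla\psi|\,dx\;\leq\;\big\|\rho_\veps^{(1)}\big\|_{L^2}\,\big\||u_\veps|^2\big\|_{L^3}\,\|\nabla\psi\|_{L^6}\,,
\end{equation*}
and integrating in time, using $\|\rho_\veps^{(1)}\|_{L^\infty_T(L^2)}\to0$ and the finiteness of $\|\nabla\psi\|_{L^6}$ (as $\psi$ is smooth and compactly supported), shows that this contribution tends to $0$.

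The residual part is where the hypothesis $\g>3/2$ enters, and I expect this to be the only genuinely delicate point. I would again use H\"older in space, now pairing $\rho_\veps^{(2)}\in L^\g$ with $|u_\veps|^2\in L^3$: the exponents close precisely because $1/\g+1/3<1$ when $\g>3/2$, leaving room to place $\nabla\psi$ in some $L^r$ with $1/r=2/3-1/\g>0$, which is harmless given the smoothness and compact support of $\psi$. Integrating in time exactly as before and invoking $\|\rho_\veps^{(2)}\|_{L^\infty_T(L^\g)}\to0$ makes this term vanish as well, so the full difference goes to $0$. The essential constraint is the strict threshold $\g>3/2$ required in \eqref{hyp:P}: at $\g=3/2$ the H\"older exponents would only close in the borderline case $1/\g+1/3=1$, and the smoothness of $\psi$ would no longer absorb $\nabla\psi$ into an integrable factor, so the strict inequality cannot be dispensed with in the three-dimensional setting.
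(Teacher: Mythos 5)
Your proof is correct and follows essentially the argument the paper indicates but omits: you rewrite the difference as $\int^T_0\!\int_\Omega(\rho_\veps-1)\,u_\veps\otimes u_\veps:\nabla\psi$, use the decomposition \eqref{eq:rho-decomp} (equivalently the essential/residual bounds \eqref{cv:r_ess}--\eqref{cv:r_res}, which encode the $O(\veps^\alpha)$ smallness the paper alludes to) together with \eqref{ub:u-Du}, \eqref{ub:u} and the embedding $H^1(\Omega)\hra L^6(\Omega)$, and close with H\"older in space and time. One correction to your final remark, though: the strict inequality $\g>3/2$ is \emph{not} what makes this particular estimate close --- at $\g=3/2$ one has $1/\g+1/3=1$ exactly, and H\"older still applies with $\nabla\psi$ placed in $L^\infty$, which is certainly finite for a smooth compactly supported test function; the threshold $\g>3/2$ is imposed in \eqref{hyp:P} for the existence theory of global weak solutions (see \cite{F-N-P}), not for this lemma.
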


The proof of relies on the fact that the difference of the two integrals is of order $O(\veps^\alpha)$: this is based on the uniform boundedness properties \eqref{ub:u-Du}, \eqref{ub:u},
\eqref{cv:r_ess} and \eqref{cv:r_res}. We omit to give the detailed argument here.

Next, it is convenient to introduce a regularisation of the velocity fields $u_\veps$. So, for any $M\in\N$, let us consider the low-frequency cut-off operator $S_M$ of a Littlewood-Paley decomposition,
as introduced in \eqref{eq:S_j} above.  For any $\veps>0$, we define
$$
u_{\veps,M}\,:=\,S_Mu_\veps\,,
$$
and analogous notation for all the other quantities here below. Observe that, in view of \eqref{ub:u-Du}, we can estimate
\begin{equation} \label{est:Id-S_M-u}
\left\|\big(\Id-S_M\big)u_\veps\right\|_{L^2_T(L^2)}\,\leq\,2^{-M}\,\left\|\nabla u_\veps\right\|_{L^2_T(L^2)}\,\leq\,C\,2^{-M}\,,
\end{equation}
for a constant $C>0$ independent of $\veps$. On the other hand, since $S_M$ is a bounded operator over all $H^s$ spaces, which moreover commutes with the space derivatives,
thanks to the uniform bounds of \eqref{ub:u} and \eqref{ub:div-u}, we get
\begin{equation} \label{est:u-e-M}
\left\|u_{\veps,M}\right\|_{L^2_T(H^s)}\,\leq\,C(T,s,M)\qquad\quad\mbox{ and }\qquad\quad
\left\|\div u_{\veps,M}\right\|_{L^2_T(H^s)}\,\leq\,\veps^\beta\,C(T,s,M)\,,
\end{equation}
for a constant depending only on the quantities in the brackets, but not on $\veps>0$.
We also notice that, thanks to relation \eqref{eq:LP-Sob}, Lemma \ref{l:Id-S} and Lebesgue dominated convergence theorem, we have the strong convergence
$$
S_Mu\,\longrightarrow\,u\qquad\qquad\mbox{ in }\qquad L^2_T(H^1)
$$
for $M\ra+\infty$, where $u$ is the vector-field identified in \eqref{cv:u}. This argument shows that, if we can pass to the limit in the convective term, where we have regularised
the velocity fields, then we can easily compute the limit when the appoximation parameter $M$ goes to $+\infty$.

The next lemma establishes that the errors created by the regularisation procedure are negligible, in the limit when $M\ra+\infty$.
\begin{lemma} \label{l:uxu_conv}
For any test function $\psi\,\in\,\mc{D}\bigl([0,T[\,\times\Omega;\R^3\bigr)$, one has
$$
\lim_{M\ra+\infty}\;\limsup_{\veps\ra0^+}\;
\left|\int^T_0\int_\Omega u_\veps\otimes u_\veps:\nabla\psi\,dx\,dt\,-\,\int^T_0\int_\Omega u_{\veps,M}\otimes u_{\veps,M}:\nabla\psi\,dx\,dt\right|\,=\,0\,.
$$
\end{lemma}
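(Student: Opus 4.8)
The plan is to exploit the bilinearity of the tensor product to telescope the difference, so that only \emph{one} factor carries the high-frequency cut-off error in each term. Setting $w_{\veps,M}\,:=\,\big(\Id-S_M\big)u_\veps\,=\,u_\veps-u_{\veps,M}$, I would write
$$
u_\veps\otimes u_\veps\,-\,u_{\veps,M}\otimes u_{\veps,M}\,=\,w_{\veps,M}\otimes u_\veps\,+\,u_{\veps,M}\otimes w_{\veps,M}\,,
$$
and treat the two resulting contributions separately. Since $\psi$ is a \emph{fixed} test function in $\mc D\big([0,T[\,\times\Omega;\R^3\big)$, its gradient $\nabla\psi$ is bounded and compactly supported; hence, pairing each term against $\nabla\psi$ and applying the Cauchy--Schwarz inequality first in space and then in time, I obtain
$$
\left|\int^T_0\!\!\int_\Omega w_{\veps,M}\otimes u_\veps:\nabla\psi\right|\,\leq\,\left\|\nabla\psi\right\|_{L^\infty_T(L^\infty)}\,\left\|w_{\veps,M}\right\|_{L^2_T(L^2)}\,\left\|u_\veps\right\|_{L^2_T(L^2)}\,,
$$
and likewise for the second term, with $u_{\veps,M}$ replacing $u_\veps$ in the last factor. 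The point of the telescoping is that it reduces everything to an $L^2\times L^2$ product in space--time, so that no Sobolev/Gagliardo--Nirenberg estimate on the product is needed.

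It then remains to insert the available uniform bounds. From \eqref{ub:u} the family $\big(u_\veps\big)_\veps$ is bounded in $L^2_T(L^2)$; since $S_M$ is a Fourier multiplier bounded on $L^2$ uniformly in $M$ (as already used for \eqref{est:u-e-M}), the same bound holds for $\big(u_{\veps,M}\big)_\veps$. The decisive input is the cut-off estimate \eqref{est:Id-S_M-u}, which gives
$$
\left\|w_{\veps,M}\right\|_{L^2_T(L^2)}\,=\,\left\|\big(\Id-S_M\big)u_\veps\right\|_{L^2_T(L^2)}\,\leq\,C\,2^{-M}\,,
$$
with $C$ independent of $\veps$. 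Combining the three displays, the whole difference in the statement is bounded by $C\,2^{-M}$, \emph{uniformly} in $\veps\in\,]0,1]$. Taking $\limsup_{\veps\ra0^+}$ preserves this bound, and letting $M\ra+\infty$ yields the claimed limit.

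I do not expect any genuine obstacle here: the argument is a short, dimension-independent computation that works verbatim also in the two-dimensional setting of Section \ref{s:limit-0}. The only points requiring a little care are the uniformity in $\veps$ of all the constants --- which is guaranteed because \eqref{ub:u} and \eqref{est:Id-S_M-u} are themselves uniform --- and the observation that the compact support of $\psi$ together with $\nabla\psi\in L^\infty$ is exactly what allows one to avoid estimating the full product $u_\veps\otimes u_\veps$ in any space of positive integrability exponent, reducing the whole matter to the square-integrability of the velocity fields.
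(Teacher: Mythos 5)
Your proof is correct and follows essentially the same route as the paper: the same telescoping decomposition into the two terms $(\Id-S_M)u_\veps\otimes u_\veps$ and $u_{\veps,M}\otimes(\Id-S_M)u_\veps$, each controlled via the cut-off estimate \eqref{est:Id-S_M-u} together with the uniform $L^2_T(L^2)$ bounds on the velocities, yielding a bound $C\,2^{-M}$ uniform in $\veps$. The only (immaterial) difference is that you invoke the uniform $L^2$-boundedness of $S_M$ in place of the paper's citation of \eqref{est:u-e-M}.
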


\begin{proof}
We start by writing the difference of the two integrals as
$$
\int^T_0\int_\Omega u_\veps\otimes u_\veps:\nabla\psi\,dx\,dt\,-\,\int^T_0\int_\Omega u_{\veps,M}\otimes u_{\veps,M}:\nabla\psi\,dx\,dt\,=\,I_1\,+\,I_2\,,
$$
where we have defined
$$
I_1\,:=\,\int^T_0\int_\Omega (\Id-S_M)u_{\veps}\otimes u_{\veps}:\nabla\psi\qquad\mbox{ and }\qquad I_2\,:=\,\int^T_0\int_\Omega u_{\veps,M}\otimes (\Id-S_M)u_{\veps}:\nabla\psi\,.
$$
At this point, from \eqref{ub:u-Du}, \eqref{est:Id-S_M-u} and \eqref{est:u-e-M} it is easy to deduce that
$$
\lim_{M\ra+\infty}\;\sup_{\veps>0}\;\left|I_j\right|\,=\,0\qquad\qquad\mbox{ for }\qquad j\,=\,1,2\,.
$$
This completes the proof of the statement.
\end{proof}

Before moving on, let us resort to the same notation introduced in Subsection \ref{ss:further} and set
$$
r_{\veps,M}\,:=\,S_Mr_\veps\,,\quad V_{\veps,M}\,:=\,S_MV_\veps\,,\quad \s_{\veps,M}\,:=\,S_M\s_\veps\,,\quad \theta_{\veps,M}\,:=\,S_M\theta_\veps\,,\quad
f_{\veps,M}\,:=\,S_Mf_\veps\,.
$$
Remark that applying operator $S_M$ to system \eqref{eq:waves} immediately yields
\begin{equation} \label{eq:waves_M}
\left\{\begin{array}{l}
        \veps\,\d_t\s_{\veps,M}\,+\,\div V_{\veps,M}\,=\,0 \\[1ex]
        \veps^\beta\,\d_tV_{\veps,M}\,-\,\nabla\theta_{\veps,M}\,+\,\veps^{\beta-\alpha}\,P'(1)\,\nabla r_{\veps,M}\,+\,\veps^{\beta-1}\,e^3\times V_{\veps,M}\,=\,\veps^\beta\,f_{\veps,M}\,.
       \end{array}
\right.
\end{equation}
Of course, the first equation can be also written as
$$
\veps^\alpha\,\d_tr_{\veps,M}\,+\,\div V_{\veps,M}\,=\,0\,.
$$
Notice that, thanks to \eqref{ub:f_e}, for all $T>0$ fixed and all $s\geq0$, one has
\begin{equation} \label{ub:f_e-M}
\sup_{\veps>0}\left\|f_{\veps,M}\right\|_{L^2_T(H^s)}\,\leq\,C(s,M,T)\,,
\end{equation}
where the positive constant $C(s,M,T)$  depends only on the quantities in the brackets.

It is apparent that we need to compare the two vector-fields $u_{\veps,M}$ and $V_{\veps,M}$: the next statement takes care of this.
\begin{lemma} \label{l:u-V}
For every $M\in\N$ and all $\veps\in\,]0,1]$, one has
$$
V_{\veps,M}\,=\,u_{\veps,M}\,+\,\veps^\alpha\,\mc V_{\veps,M}\,,
$$
where the sequence $\big(\mc V_{\veps,M}\big)_\veps$ verifies, uniformly in $\veps>0$, the bounds
$$
\left\|\mc V_{\veps,M}\right\|_{L^2_T(H^s)}\,\leq\,C(s,M,T)\,,
$$
for all $T>0$, $s\geq0$ and $M\in\N$ fixed.
\end{lemma}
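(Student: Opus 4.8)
The plan is to read off the decomposition directly from the definition of $r_\veps$ and then to exploit the smoothing effect of the low-frequency cut-off $S_M$. Since $r_\veps\,=\,\big(\rho_\veps-1\big)/\veps^\alpha$, we have $\rho_\veps\,=\,1+\veps^\alpha r_\veps$, whence $V_\veps\,=\,\rho_\veps u_\veps\,=\,u_\veps\,+\,\veps^\alpha\,r_\veps u_\veps$. Applying the linear operator $S_M$ to this identity and recalling that $u_{\veps,M}=S_Mu_\veps$ gives $V_{\veps,M}\,=\,u_{\veps,M}\,+\,\veps^\alpha\,S_M\big(r_\veps u_\veps\big)$, so the natural choice is $\mc V_{\veps,M}\,:=\,S_M\big(r_\veps u_\veps\big)$. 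It then remains only to establish the uniform-in-$\veps$ bound for $\mc V_{\veps,M}$ in $L^2_T(H^s)$.

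First I would control the product $r_\veps u_\veps$ in a space of the form $L^2_T(L^{q_1}+L^{q_2})$ with exponents $q_1,q_2\in[1,2]$, splitting $r_\veps=[r_\veps]_\ess+[r_\veps]_\res$. By \eqref{cv:r_ess} the essential part is bounded in $L^\infty_T(L^2)$, while by \eqref{cv:r_res} the residual part is bounded (indeed vanishing) in $L^\infty_T(L^p)$ for any $1\leq p<\min\{2,\g\}$. Since $u_\veps$ is uniformly bounded in $L^2_T(H^1)\hra L^2_T(L^6)$ by \eqref{ub:u-Du}, \eqref{ub:u} and the $3$-D Sobolev embedding, H\"older's inequality places $[r_\veps]_\ess u_\veps$ in $L^2_T(L^{3/2})$ (as $1/2+1/6=2/3$) and $[r_\veps]_\res u_\veps$ in $L^2_T(L^{q})$ with $1/q=1/p+1/6$. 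The constraint $\g>3/2$ is exactly what allows the choice $p>3/2$, forcing $q<3/2$; thus both exponents lie strictly below $2$.

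The final, and really the only substantial, ingredient is the regularising action of $S_M$: since $\widehat{S_M h}$ is supported in the ball $B(0,2^{M+1})$, the first Bernstein inequality of Lemma \ref{l:bern} (used with source exponent $q\leq2$ and target exponent $2$), together with the $L^q$-boundedness of $S_M$, yields $\|S_M h\|_{H^s}\leq C(s,M)\,\|h\|_{L^q}$ for every $s\geq0$. Applying this pointwise in time to $h=r_\veps u_\veps$ and integrating over $[0,T]$ gives $\|\mc V_{\veps,M}\|_{L^2_T(H^s)}\leq C(s,M,T)$ uniformly in $\veps$, which is the claim. The only point requiring care is precisely the bookkeeping of the previous paragraph, namely ensuring that the product lands in some $L^q$ with $q\leq2$ so that the frequency localisation of $S_M$ genuinely produces the $H^s$ gain; but this is where the hypothesis $\g>3/2$ enters and makes the argument go through.
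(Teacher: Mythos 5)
Your proof is correct and follows essentially the same route as the paper: the same decomposition $V_\veps=u_\veps+\veps^\alpha\,r_\veps\,u_\veps$ with $\mc V_\veps=r_\veps\,u_\veps$, the same essential/residual splitting combined with $u_\veps\in L^2_T(H^1)\hookrightarrow L^2_T(L^6)$ and H\"older, landing the product in Lebesgue spaces with exponents at most $2$. The only differences are cosmetic: you make explicit the final smoothing step for $S_M$ via Bernstein's inequality, which the paper leaves implicit, and for the residual part you pick an exponent $p\in\,]3/2,\min\{2,\g\}[\,$ via \eqref{cv:r_res}, where the paper instead uses the $L^\g$ bound directly when $\g<2$ and an $L^2$ bound when $\g\geq2$ (and note the binding constraint for your step is merely $q\geq1$, i.e.\ $p\geq6/5$, so $\g>3/2$ leaves even more room than you suggest).
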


\begin{proof}
The proof is straightforward: by definition, we decompose
$$
V_\veps\,=\,\rho_\veps\,u_\veps\,=\,u_\veps\,+\,\veps^\alpha\,r_\veps\,u_\veps\,.
$$
We set $\mc V_\veps\,:=\,r_\veps\,u_\veps$. By \eqref{ub:u} and \eqref{cv:r_ess}, we know that $\big(\left[\mc V_\veps\right]_\ess\big)_\veps$ is uniformly bounded in $L^2_T(L^{3/2})$, for all $T>0$.

Let us now focus on the residual part. If $\g<2$, estimate \eqref{ub:r_res} implies that $\big([r_\veps]_\res\big)_\veps$ is uniformly bounded in $L^\infty_T(L^\g)$, so that
$\big(\left[\mc V_\veps\right]_\res\big)_\veps$ is bounded in $L^2_T(L^p)$, with $p\,=\,6\g/(6+\g)\leq 2$. Therefore, by dual Sobolev embedding this term is uniformly bounded in some $L^2_T(H^{-m})$,
where $m=1\,-\,3/\g$.

Finally, suppose $\g\geq2$. In this case, the same computation as in \eqref{ub:r_res} shows that $\big([r_\veps]_\res\big)_\veps\subset L^\infty_T(L^2)$, hence also
$\big(\left[\mc V_\veps\right]_\res\big)_\veps$ is uniformly bounded in $L^2_T(L^{3/2})$.

This completes the proof of the lemma.
\end{proof}

From the previous lemma and bounds \eqref{est:u-e-M}, we immediately deduce the next statement, whose proof is hence omitted.
\begin{coroll} \label{c:u-V}
Let us define $\eta_\veps\,:=\,\curl V_\veps$ and $\omega_\veps\,:=\,\curl u_\veps$. For every $M\in\N$ and all $\veps\in\,]0,1]$, one has
the following properties, which hold for all $T>0$, $s\geq0$ and $M\in\N$ fixed:
\begin{align*}
&\eta_{\veps,M}\,=\,\omega_{\veps,M}\,+\,\veps^\alpha\,\z_{\veps,M}\,,\qquad\qquad\mbox{ with }\qquad \left\|\z_{\veps,M}\right\|_{L^2_T(H^s)}\,\leq\,C(s,M,T) \\
&\left\|\div V_{\veps,M}\right\|_{L^2_T(H^s)}\,\leq\,\veps^{\alpha}\,C(s,M,T)\,,
\end{align*}
for a suitable constant $C(s,M,T)>0$, depending only on the quantities in the brackets.
\end{coroll}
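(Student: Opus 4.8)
The plan is to read off both assertions directly from Lemma \ref{l:u-V} by applying the operators $\curl$ and $\div$ to the decomposition $V_{\veps,M}=u_{\veps,M}+\veps^\alpha\,\mc V_{\veps,M}$. The only structural fact I would invoke is that the low-frequency cut-off $S_M=\chi(2^{-M}D)$ is a Fourier multiplier, hence commutes with all constant-coefficient differential operators; in particular $\curl V_{\veps,M}=S_M\curl V_\veps=\eta_{\veps,M}$ and likewise $\curl u_{\veps,M}=\omega_{\veps,M}$, so that the notation is unambiguous.

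First I would take the $\curl$ of the identity of Lemma \ref{l:u-V}, which gives
$$
\eta_{\veps,M}\,=\,\omega_{\veps,M}\,+\,\veps^\alpha\,\z_{\veps,M}\,,\qquad\text{where}\qquad \z_{\veps,M}\,:=\,\curl\mc V_{\veps,M}\,.
$$
Since $\curl$ is a first-order operator, it maps $H^{s+1}$ into $H^s$ continuously; combined with the bound $\|\mc V_{\veps,M}\|_{L^2_T(H^{s+1})}\leq C(s+1,M,T)$ furnished by Lemma \ref{l:u-V} (valid for every nonnegative order, in particular $s+1$), this yields $\|\z_{\veps,M}\|_{L^2_T(H^s)}\leq C(s,M,T)$ after relabelling the constant, uniformly in $\veps$.

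For the divergence I would apply $\div$ to the same identity, obtaining $\div V_{\veps,M}=\div u_{\veps,M}+\veps^\alpha\,\div\mc V_{\veps,M}$. The second summand is controlled exactly as above, $\|\div\mc V_{\veps,M}\|_{L^2_T(H^s)}\leq\|\mc V_{\veps,M}\|_{L^2_T(H^{s+1})}\leq C(s,M,T)$, hence it contributes a term of size $\veps^\alpha C$. For the first summand I would use the second bound in \eqref{est:u-e-M}, namely $\|\div u_{\veps,M}\|_{L^2_T(H^s)}\leq\veps^\beta\,C(T,s,M)$. The key observation is that, since $0\le\alpha<1\le\beta$ and $0<\veps\le1$, one has $\veps^\beta\le\veps^\alpha$; therefore this summand is also of size $\veps^\alpha C$, and adding the two estimates gives $\|\div V_{\veps,M}\|_{L^2_T(H^s)}\leq\veps^\alpha\,C(s,M,T)$, as claimed.

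There is essentially no hard step here: the corollary is a purely algebraic and functional-analytic consequence of the preceding lemma, which is precisely why its proof can be omitted. The only point requiring a moment's attention is the comparison of the two singular powers in the divergence estimate, where it is crucial that $\beta\geq\alpha$ (guaranteed by $\beta\ge1>\alpha$) so that the better decay $\veps^\beta$ of $\div u_{\veps,M}$ can be absorbed into the target rate $\veps^\alpha$.
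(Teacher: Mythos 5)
Your proof is correct and is exactly the argument the paper intends: the corollary is presented as an immediate consequence of Lemma \ref{l:u-V} and the bounds \eqref{est:u-e-M}, with the proof omitted for that reason. Your write-up supplies precisely those details---applying $\curl$ and $\div$ to the decomposition $V_{\veps,M}=u_{\veps,M}+\veps^\alpha\,\mc V_{\veps,M}$, using that the multiplier $S_M$ commutes with constant-coefficient differential operators, invoking the bound on $\mc V_{\veps,M}$ at order $s+1$, and absorbing $\veps^\beta$ into $\veps^\alpha$ thanks to $\beta\geq1>\alpha$ and $0<\veps\leq1$.
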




\subsubsection{Strong convergence of the vorticity} \label{sss:strong-vort}

As it will be apparent in the next paragraph, in order to pass to the limit we still need strong convergence properties for some quantity related to the velocity fields $V_\veps$,
namely for the vertical averages of the vorticity functions.
We have already remarked in Subsection \ref{ss:further} that the sequence
$\big(\lan\eta^3_\veps\ran\,-\,\lan\sigma_\veps\ran\big)_\veps$ is compact in suitable spaces, but this information is not enough, since those quantities
are not compact \textsl{a priori}, when considered separately.

In this part, we are going to show that actually $\big(\lan\eta^3_{\veps,M}\ran\big)_\veps$ is compact in appropriate spaces. Such a property cannot
really come from the wave system \eqref{eq:waves}, due to the anisotropy of scaling: notice that acoustic waves propagate at speed $\veps^{-\alpha}$, so that
the dispersive estimates of e.g. \cite{F-G-GV-N}, \cite{F-N_CPDE} are out of use here.

The key observation to get compactness, instead, is that the second equation in \eqref{eq:waves} hides a heat-like equation for $\div V_\veps$, with fast oscillations in time.
Hence, from that equation we can derive strong decay for $\div V_\veps$ and its higher order derivatives. Then, the idea is to use this decay in the equation for $\lan \eta^3_\veps\ran$,
see \eqref{eq:eta^3}, to get the compactness in time of higher order derivatives of that quantity.

\paragraph{Decay of higher order derivatives of $\div_h\lan V_{\veps,M}^h\ran$.}
In order to fully justify the previous heuristics, let us proceed in the following way. First of all, we introduce the operators $\P$ to be the Leray-Helmholtz
projector onto the divergence-free vector-fields and $\Q$ to be the projector orthogonal to $\P$ (with respect to the $L^2$ scalar product). Then, resorting to the notation introduced
in \eqref{dec:vert-av}, we can decompose
$$
\lan V^h_{\veps,M}\ran\,=\,\P[\lan V^h_{\veps,M}\ran]\,+\,\Q[\lan V^h_{\veps,M}\ran]\,,\qquad\qquad\mbox{ with }\qquad \Q[\lan V^h_{\veps,M}\ran]\,:=\,\nabla_h\lan\Phi_{\veps,M}\ran\,.
$$
Notice that
$$
\P[\lan V^h_{\veps,M}\ran]\,=\,-\nabla_h^\perp(-\Delta_h)^{-1}\lan\eta^3_{\veps,M}\ran\qquad\quad\mbox{ and }\qquad\quad \divh\lan V^h_{\veps,M}\ran\,=\,\Delta_h\lan\Phi_{\veps,M}\ran\,.
$$

Next, consider the smoothed wave system \eqref{eq:waves_M}. After recalling that $\theta_\veps\,=\,\veps^{-\beta}\,\div u_\veps$, in view of Lemma \ref{l:u-V} we can write
the second equation as
$$
\veps^{2\beta}\,\d_tV_{\veps,M}\,-\,\nabla\div V_{\veps,M}\,+\,\veps^{2\beta-\alpha}\,P'(1)\,\nabla r_{\veps,M}\,+\,\veps^{2\beta-1}\,e^3\times V_{\veps,M}\,=\,\veps^{2\beta}\,f_{\veps,M}\,-\,
\veps^\alpha\,\nabla\div\mc V_{\veps,M}\,.
$$
Therefore, taking the vertical averages of the horizontal components yields an equation for $\lan\Phi_{\veps,M}\ran$:
\begin{equation} \label{eq:Phi}
\d_t\lan\Phi_{\veps,M}\ran\,-\,\frac{1}{\veps^{2\beta}}\,\Delta_h\lan\Phi_{\veps,M}\ran\,=\,\lan G_{\veps,M}\ran\,,
\end{equation}
where we have defined 
\begin{align} \label{def:g}
\lan G_{\veps,M}\ran\,&:=\,-\,\big(-\Delta_h\big)^{-1}\div_h\lan f^h_{\veps,M}\ran\,-\,\veps^{\alpha-2\beta}\,\div_h\lan\mc V_{\veps,M}\ran\, \\
&\qquad\qquad\qquad +\,\veps^{-1}\,\big(-\Delta_h\big)^{-1}\div_h\lan V_{\veps,M}^{h}\ran^\perp\,-\,\veps^{1-2\alpha}\,P'(1)\,\lan\s_{\veps,M}\ran\,. \nonumber
\end{align}

Let us fix some $s\geq1$, whose precise value will be decided later. By Theorem \ref{t:fast-heat}, applied with $\nu(\veps)\,=\,\veps^{2\beta}$
(and to $\nabla\Phi_{0,\veps}$ and $\nabla G_\veps$ to avoid the singularity of the operator $(-\Delta_h)^{-1}$), for any $T>0$ and any $0<\de<1$ we get,
for all $\veps\leq\veps_0(s,\de)$,
\begin{align*}
\left\|(-\Delta_h)^s\,\nabla_h\lan\Phi_{\veps,M}\ran\right\|_{L^2\big(\,]\de,T[\,;L^2\big)}\,\leq\,C\,\veps^{s\beta}\,\left(\left\|\nabla_h\lan\Phi_{0,\veps,M}\ran\right\|_{L^2}\,+\,
\left\|\nabla_h\lan G_{\veps,M}\ran\right\|_{L^2_T(H^s)}\right)\,,
\end{align*}
for a positive constant $C$ just depending on $T$, $s$ and $\de$. At this point, it is easy to see that $\left\|\nabla_h\lan\Phi_{0,\veps,M}\ran\right\|_{L^2}\,\leq\,C$,
for some constant which does not depend on $\veps$, nor on $M$ and on the various parameters $T$, $s$ and $\de$. On the other hand, by definition
\eqref{def:g}, the uniform bounds established in Subsections \ref{ss:unif-bounds} and \ref{ss:further} and Bernstein's inequalities, we also have
\begin{align*}
\left\|\nabla_h\lan G_\veps\ran\right\|_{L^2_T(H^s)}\,\leq\,C\,\left(1\,+\,\veps^{\alpha-2\beta}\,+\,\veps^{-1}\,+\,\veps^{1-2\alpha}\right)\,,
\end{align*}
for a positive constant $C\,=\,C(s,M,T)$ depending only on the quantities on the brackets, but not on $\veps>0$.
Notice that the worst exponent is $\alpha-2\beta$: therefore, taking
\begin{equation} \label{def:s}
s\,\geq\,s_0\,:=\,2\,+\,\frac{1\,-\,\alpha}{\beta}\,,
\end{equation}
we finally deduce that, for all $T>0$ and all $\de\in\,]0,1[\,$, one has
\begin{equation} \label{est:Phi}
\left\|(-\Delta_h)^s\,\nabla_h\lan\Phi_{\veps,M}\ran\right\|_{L^2\big(\,]\de,T[\,;L^2\big)}\,\leq\,C\,\veps^{\beta(s-2)+\alpha}\,\leq\,C\,\veps\,,
\end{equation}
for a constant $C\,=\,C(s,M,T,\de)$ depending only on the quantity in the brackets, but uniform in $\veps\,\in\,]0,\veps_0(s,\de)[\,$, where the parameter
$\veps_0(s,\de)$ is the one given by Theorem \ref{t:fast-heat}.

\paragraph{Compactness of the averaged vorticity.}
Now we are ready to derive compactness properties for the vertical averages of the vorticity functions.
We explicitly point out that, in the next argument, $M$ plays the role of a fixed parameter: all the compactness properties are with respect to $\veps$, working at $M$ fixed.

We start by observing that, from the second equation in \eqref{eq:waves_M}, we gather (recall equation \eqref{eq:eta^3} above)
$$
\d_t\lan\eta^3_{\veps,M}\ran\,=\,\curlh\lan f_{\veps,M}^h\ran\,-\,\frac{1}{\veps}\,\divh\lan V_{\veps,M}^h\ran
$$
for all $\veps>0$ and all $M\in\N$. Recall that $\divh\lan V^h_{\veps,M}\ran\,=\,\Delta_h\lan\Phi_{\veps,M}\ran$; hence, after applying the operator
$(-\Delta_h)^{s_0}$, where $s_0$ has been fixed in \eqref{def:s}, we get
$$
\d_t(-\Delta_h)^{s_0}\lan\eta^3_{\veps,M}\ran\,=\,(-\Delta_h)^{s_0}\curlh\lan f_{\veps,M}^h\ran\,+\,\frac{1}{\veps}\,(-\Delta_h)^{s_0+1}\lan \Phi_{\veps,M}^h\ran\,.
$$

From the previous equation, thanks to estimate \eqref{est:Phi}, we derive the following property: with the notation of Theorem \ref{t:fast-heat},
for all $\de\in\,]0,1[\,$, define $\veps_\de\,:=\,\veps_0(s_0,\de)$; then, for any $M\in\N$ fixed, the sequence
$$
\left(\d_t\lan\eta^3_{\veps,M}\ran\right)_{\veps\leq\veps_\de}\qquad \mbox{ is uniformly bounded in }\quad L^2\big(\,]\de,T[\,;\dot H^{s_0}(\R^2)\big)\,.
$$
Observe that $s_0\geq2$; therefore, up to working with $s_0+\eta$ (for $\eta>0$ small) in the case when $s_0-1\in\N$,
by Sobolev embeddings in H\"older spaces (see e.g. Theorem 1.50 of \cite{B-C-D}) we gather that
$\left(\lan\eta^3_{\veps,M}\ran\right)_{\veps\leq\veps_\de}$ is uniformly bounded in e.g. the space $W^{1,2}\big(\,]\de,T[\,;\mc C^{0,\mf s}(\R^2)\big)$, for some $\mf s\in\,]0,1[\,$,
hence in $W^{1,2}\big(\,]\de,T[\,;L^2_{\rm loc}(\R^2)\big)$.
On the other hand, by uniform bounds (see \eqref{ub:V_e} for instance), we know that $\left(\lan\eta^3_{\veps,M}\ran\right)_{\veps\leq\veps_\de}$
is uniformly bounded in the space $L^2\big(\,]\de,T[\,;H^1_{\rm loc}(\R^2)\big)$.
Therefore, an application of Aubin-Lions lemma implies that, up to the extraction of a suitable subsequence,
\begin{equation} \label{cv:vort}
\left(\lan\eta^3_{\veps,M}\ran\right)_{\veps\leq\veps_\de}\qquad \mbox{ is strongly convergent in }\quad L^2\big(\,]\de,T[\,;L^2_{\rm loc}(\R^2)\big)\,.
\end{equation}
As a consequence, in view of Corollary \ref{c:u-V}, one has
$$
\lan\eta^3_{\veps,M}\ran\,\longrightarrow\,\omega_M\qquad\qquad \mbox{ in }\qquad L^2\big(\,]\de,T[\,;L^2(K)\big)\,,
$$
for any $K\subset\Omega$ compact, in the limit when $\veps\ra0^+$, at any $M\in\N$ fixed. We recall that we have denoted $\omega\,:=\,\curlh u^h$, where $u$ is the limit velocity field,
and $\omega_M\,:=\,S_M\omega$.

\subsubsection{The compensated compactness argument} \label{sss:comp-comp}

In light of Lemmas \ref{l:1_conv} and \ref{l:uxu_conv}, we have reduced our problem to passing to the limit in the integral
\begin{align*}
-\int^T_0\!\!\!\int_\Omega u_{\veps,M}\otimes u_{\veps,M}:\nabla\psi\,&=\,\int^T_0\!\!\!\int_\Omega\div\bigl(u_{\veps,M}\otimes u_{\veps,M}\bigr)\cdot\psi\,=\,
\int^T_0\!\!\!\int_{\R^2}\left(\mc T^1_{\veps,M}\,+\,\mc{T}^2_{\veps,M}\right)\cdot\psi^h\,,
\end{align*}
where we have defined
\begin{align*}
\mc T^1_{\veps,M}\,:=\,\divh\bigl(\lan u^h_{\veps,M}\ran\otimes\lan u^h_{\veps,M}\ran\bigr)\qquad\qquad\mbox{ and }\qquad\qquad
\mc T^2_{\veps,M}\,:=\,\divh\lan\wtilde u^h_{\veps,M}\otimes\wtilde u^h_{\veps,M}\ran\,.
\end{align*}
Notice that the integration by parts in the previous equality is fully justified, since now each term is smooth in the space variable. Moreover, we have used the structure of the test function $\psi$,
whose third component is identically zero and whose horizontal components depend only on $x^h$.

We deal separately with each one of the previous terms here below. In the argument that follows, we will denote by $\mc R_{\veps,M}$  any remainder term, i.e. any term
having the property that
\begin{equation} \label{eq:remainder}
\lim_{M\ra+\infty}\,\limsup_{\veps\ra0}\,\left|\int^T_0\int_\Omega \mc{R}_{\veps,M}\,\cdot\,\psi\,dx\,dt\right|\,=\,0
\end{equation}
for all test functions $\psi\,\in\,\mc{D}\bigl([0,T[\,\times\Omega;\R^3\bigr)$ such that $\psi\,=\,\big(\psi^h,0\big)$, with $\psi^h\,=\,\psi^h(t,x^h)$ which satisfies moreover $\divh\psi^h=0$.

\paragraph{The $\mc T^1_{\veps,M}$ term.}

We start by considering the term $\mc T^1_{\veps,M}$: standard computations yield
\begin{align}
\mc T^1_{\veps,M}\,&=\,\divh\bigl(\lan u^h_{\veps,M}\ran\otimes\lan u^h_{\veps,M}\ran\bigr)\,=\,
\divh\lan u^h_{\veps,M}\ran\,\lan u^h_{\veps,M}\ran\,+\,\lan u^h_{\veps,M}\ran\cdot\nabla_h\lan u^h_{\veps,M}\ran \label{eq:T^1} \\
&=\divh\lan u^h_{\veps,M}\ran\,\lan u^h_{\veps,M}\ran\,+\,\frac{1}{2}\,\nabla_h\left|\lan u^h_{\veps,M}\ran\right|^2\,+\,\lan\omega_{\veps,M}^3\ran\,\lan u^h_{\veps,M}\ran^\perp\,. \nonumber
\end{align}
Let us recall that $\omega^3_\veps\,=\,\curlh u_\veps^h$ and $\eta^3_\veps\,=\,\curlh V_\veps^h$, and analogous formulas for the regularised
quantities $\omega^3_{\veps,M}$ and $\eta^3_{\veps,M}$.

Notice that the first two terms of the last relation contribute as remainders, in the sense of relation \eqref{eq:remainder},
in view of the uniform bounds stated in \eqref{est:u-e-M}. On the other hand, thanks to Corollary \ref{c:u-V} we can write
$$
\mc T^1_{\veps,M}\,=\,\mc R_{\veps,M}\,+\,\lan\eta_{\veps,M}^3\ran\,\lan u^h_{\veps,M}\ran^\perp\,.
$$
In order to understand the limit of the $\mc T^1_{\veps,M}$ term, we have then to take the limit (in the sense of distribution) of the last term in the right-hand side of the previous relation.

To this end, let us consider the limit for $\veps\ra0^+$, at any $M\in\N$ fixed, of the integral
$$
\int^T_0\int_{\R^2}\lan\eta_{\veps,M}^3\ran\,\lan u^h_{\veps,M}\ran^\perp\cdot\psi^h\,dx^h\,dt\,.
$$
Let us fix some $0<\de<1$. In view of \eqref{cv:vort} and Sobolev embeddings, up to an extraction, we know that
$\left(\lan\eta^3_{\veps,M}\ran\right)_{\veps\leq\veps_\de}$ strongly converges to $\omega_M$ in $L^2\big(\,]\de,T[\,;L^2(K)\big)$, where we have denoted by $K$ the support (in the space variable)
of the test function $\psi$.
Combining this property with the uniform boundedness of $\big(\lan u^h_{\veps,M}\ran\big)_\veps$ in e.g. $L^2_T(L^2)$,
we deduce that
$$
\int^T_\de\int_{\R^2}\lan\eta_{\veps,M}^3\ran\,\lan u^h_{\veps,M}\ran^\perp\cdot\psi^h\,dx^h\,dt\,\longrightarrow\,
\int^T_\de\int_{\R^2}\omega_{M}\,u^{h,\perp}_{M}\cdot\psi^h\,dx^h\,dt\,.
$$
On the other hand, due to the uniform boundedness of both $\big(\lan u^h_{\veps,M}\ran\big)_\veps$ and $\big(\lan\eta^3_{\veps,M}\ran\big)_\veps$ in $L^2_T(L^2)$,
we have
$$
\left|\int^\de_0\int_{\R^2}\lan\eta_{\veps,M}^3\ran\,\lan u^h_{\veps,M}\ran^\perp\cdot\psi^h\,dx^h\,dt\right|\,\leq\,C_\de\,,
$$
where $C_\de\,\longrightarrow\,0$ when $\de\ra0^+$.
Putting those properties together finally shows that, in the limit $\veps\ra0^+$, one has
$$
\int^T_0\int_{\R^2}\lan\eta_{\veps,M}^3\ran\,\lan u^h_{\veps,M}\ran^\perp\cdot\psi^h\,\longrightarrow\,\int^T_0\int_{\R^2}\omega_{M}\,u_{M}^{h,\perp}\cdot\psi^h\,=\,
-\int^T_0\int_{\R^2}u_M\otimes u_M:\nabla_h\psi^h\,,
$$
where the last equality holds, since we can perform the computations in \eqref{eq:T^1} backwards.

Therefore, after taking the limit for $M\ra+\infty$ and arguing like in the proof to Lemma \ref{l:uxu_conv}, we have finally proved that
\begin{equation} \label{eq:limit-T^1}
\lim_{M\ra+\infty}\,\limsup_{\veps\ra0^+}\int^T_0\int_{\R^2} \mc T^1_{\veps,M}\cdot\psi^h\,dx\,dt\,=\,-\int^T_0\int_{\R^2}u^h\otimes u^h:\nabla_h\psi^h\,dx\,dt
\end{equation}
for all test function $\psi\,=\,\big(\nabla_h\vphi,0\big)$, with $\vphi\in\mc D\big([0,T[\,\times\R^2\big)$. Recall that $u=\big(u^h,0\big)$ is the limit velocity field
identified in \eqref{cv:u}, which verifies the properties established in Proposition \ref{p:constr_a}.

\paragraph{The $\mc T^2_{\veps,M}$ term.}
Let us now consider the term $\mc{T}^2_{\veps,M}$: exactly as done above, and in view of Lemma \ref{l:u-V} and Corollary \ref{c:u-V}, we can write
\begin{align*}
\mc{T}^2_{\veps,M}\,&=\,\divh\left(\lan \wtilde{u}^h_{\veps,M}\otimes \wtilde{u}^h_{\veps,M}\ran\right)
\,=\,\lan \divh\bigl(\wtilde{u}^h_{\veps,M}\bigr)\;\wtilde{u}^h_{\veps,M}\ran\,+\,
\dfrac{1}{2}\,\lan\nabla_h\left|\wtilde{u}^h_{\veps,M}\right|^2\ran\,+\,
\lan \wtilde{\omega}^3_{\veps,M}\;\left(\wtilde{u}^h_{\veps,M}\right)^\perp\ran \\
&=\,\mc R_{\veps,M}\,+\,\lan \divh\bigl(\wtilde{V}^h_{\veps,M}\bigr)\;\wtilde{V}^h_{\veps,M}\ran\,+\,
\lan \wtilde{\eta}^3_{\veps,M}\;\left(\wtilde{V}^h_{\veps,M}\right)^\perp\ran
\end{align*}

Let us focus on the last term for a while: with the notations introduced in \eqref{dec:vert-av}, we have
\begin{align*}
&\left({\curl}\wtilde{V}_{\veps,M}\right)^h\,=\,\d_3\wtilde{W}^h_{\veps,M}\qquad\qquad\qquad\mbox{ with }\qquad
\wtilde{W}^h_{\veps,M}\,:=\,\left(\wtilde{V}^h_{\veps,M}\right)^\perp\,-\,
\d_3^{-1}\nabla^\perp_h\wtilde{V}^3_{\veps,M}\,;\\
&\left({\curl}\wtilde{V}_{\veps,M}\right)^3\,=\,{\curlh}\wtilde{V}^h_{\veps,M}\,=\,\wtilde{\eta}^3_{\veps,M}\,.
\end{align*}
From the momentum equation in \eqref{eq:waves_M}, where we take the mean-free part and then apply the $\curl$ operator, we immediately infer an equation for those quantities:
\begin{equation} \label{eq:mean-free}
\begin{cases}
\veps\,\d_t\wtilde{W}^h_{\veps,M}\,-\,\wtilde{V}^h_{\veps,M}\,=\,\veps\,\d_3^{-1}\left(\curl\wtilde{f}_{\veps,M}\right)^h \\[1ex]
\veps\,\d_t\wtilde{\eta}^3_{\veps,M}\,+\,\divh\wtilde{V}^h_{\veps,M}\,=\,\veps\,\curlh\wtilde{f}^h_{\veps,M}\,.
\end{cases}
\end{equation}
Thanks to the previous relations and the bounds in \eqref{ub:f_e-M}, we can write
\begin{align*}
\wtilde{\eta}^3_{\veps,M}\,\left(\wtilde{V}^h_{\veps,M}\right)^\perp\,&=\,
\veps\,\d_t\left(\wtilde{W}^h_{\veps,M}\right)^\perp\,\wtilde{\eta}^3_{\veps,M}\,-\,\veps\,\wtilde{\eta}^3_{\veps,M}\,\d_3^{-1}\left(\curl\wtilde{f}_{\veps,M}\right)^{h,\perp} \\
&=\,-\,\veps\,\left(\wtilde{W}^h_{\veps,M}\right)^\perp\,\d_t\wtilde{\eta}^3_{\veps,M}\,+\,\mc{R}_{\veps,M}\;=\;
\left(\wtilde{W}^h_{\veps,M}\right)^\perp\,\divh\wtilde{V}^h_{\veps,M}\,+\,\mc{R}_{\veps,M}\,.
\end{align*}

Therefore, we finally arrive at the expression
\begin{align*}
\mc{T}^2_{\veps,M}\,&=\,\lan \div_{\!h}\bigl(\wtilde{V}^h_{\veps,M}\bigr)\,
\left(\wtilde{V}^h_{\veps,M}\,+\,\left(\wtilde{W}^h_{\veps,M}\right)^\perp\right)\ran\,+\,\mc{R}_{\veps,M} \\
&=\,\lan \div\wtilde{V}_{\veps,M}\,\left(\wtilde{V}^h_{\veps,M}\,+\,\left(\wtilde{W}^h_{\veps,M}\right)^\perp\right)\ran\,-\,
\lan \d_3\wtilde{V}^3_{\veps,M}\,\left(\wtilde{V}^h_{\veps,M}\,+\,\left(\wtilde{W}^h_{\veps,M}\right)^\perp\right)\ran\,+\,\mc R_{\veps,M}\,.
\end{align*}
Notice that the first term in the right-hand side of the last equality is a remainder, in the sense of \eqref{eq:remainder}, in view of Corollary \ref{c:u-V}.
Concerning the second term in the right-hand side, instead, we use the definition of $W_{\veps,M}$: direct computations show that
\begin{eqnarray*}
\d_3\wtilde{V}^3_{\veps,M}\left(\wtilde{V}^h_{\veps,M}+\left(\wtilde{W}^h_{\veps,M}\right)^\perp\right) & = & 
\d_3\!\!\left(\wtilde{V}^3_{\veps,M}\left(\wtilde{V}^h_{\veps,M}+\left(\wtilde{W}^h_{\veps,M}\right)^\perp\right)\right)-
\wtilde{V}^3_{\veps,M}\,\d_3\!\left(\wtilde{V}^h_{\veps,M}+\left(\wtilde{W}^h_{\veps,M}\right)^\perp\right) \\
& = & \mc{R}_{\veps,M}\,-\,\frac{1}{2}\,\nabla_h\left|\wtilde{V}^3_{\veps,M}\right|^2\;=\;\mc{R}_{\veps,M}\,.
\end{eqnarray*}

In the end, we have just proved that $\mc{T}^2_{\veps,M}\,=\,\mc R_{\veps,M}$, hence the contribution of this term vanishes in the limit:
\begin{equation} \label{eq:limit-T^2}
\lim_{M\ra+\infty}\,\limsup_{\veps\ra0^+}\int^T_0\int_{\R^2} \mc T^2_{\veps,M}\cdot\psi^h\,=\,0\,.
\end{equation}

\subsection{Identifying the limit equation} \label{ss:limit-eq}
Let us resume the proof of convergence in relation \eqref{eq:weak-for-limit}.
All the terms appearing therein have already been treated in Subsection \ref{ss:first-conv}, except the convective term. As for the latter, in view of \eqref{eq:limit-T^1} and
\eqref{eq:limit-T^2}, Lemmas \ref{l:1_conv} and \ref{l:uxu_conv} imply that
\begin{align*}
\lim_{\veps\ra0^+}-\int^T_0\int_{\Omega}\rho_\veps\,u_\veps\otimes u_\veps:\psi\,dx\,dt\,&=\,-\int^T_0\int_{\R^2}u^h\otimes u^h:\nabla_h\psi^h\,dx\,dt\,.
\end{align*}

Now recall that $\psi^h\,=\,\nabla_h^\perp\vphi$, where $\vphi\,=\,\vphi(t,x^h)$. As stated in Remark \ref{r:vort-limit}, our approach for dealing with the Coriolis term forces us
to consider the vorticity formulation of the limit dynamics, when tested against the test function $\vphi$.
Hence, we still have to make an integration by parts with respect to the operator $\nabla_h^\perp$, which amounts exactly to apply the vorticity operator to the equations.

Straightforward computatons show that
\begin{align*}
\int_{\R^2}\lan u^h_{0}\ran\cdot\psi^h(0)\,&=\,-\int_{\R^2}\curlh\lan u^h_{0}\ran\,\vphi(0) \\
-\int^T_0\!\!\!\int_{\R^2}u^h\cdot\d_t\psi^h\,&=\,\int^T_0\!\!\!\int_{\R^2}\omega\,\d_t\vphi \\
\mu\int_0^T\!\!\!\int_{\R^2}\nabla_h u^h:\nabla_h\psi^h\,&=\,-\,\mu\int_0^T\!\!\!\int_{\R^2}\nabla_h\omega\cdot\nabla_h\vphi \\
-\int^T_0\int_{\R^2}u^h\otimes u^h:\nabla_h\psi^h\, &=\,\int^T_0\int_{\R^2}\omega\,u^h\cdot\nabla_h\vphi\,.
\end{align*}
Therefore, the final expression coincides exactly with the weak formulation of equation \eqref{eq:limit-a}, up to multiplication by the factor $-1$.

In the end, we have completed the proof to Theorem \ref{t:alpha}.

\medbreak
Before concluding this part, a couple of remarks are in order.

\begin{rem} \label{r:sigma}
Notice that the limit dynamics is somehow underdetermined, since we have only one equation, namely  \eqref{eq:limit-a}, for the two unknowns $\omega$ and $\sigma$.

Very likely, $\s$ solves a transport equation by $u^h$; nonetheless, we are not able to prove rigorously that this is actually the case. Indeed, first of all
we have strong convergence properties only on the vertical means of $u_{\veps,M}$, and, even more importantly, we have uniform bounds only for $\lan\sigma_{\veps}\ran$,
whereas we know nothing on the oscillating component $\wtilde\sigma_\veps$. Those facts represent a true obstacle in taking the limit in the mass equation (divided by $\veps$)
and finding an equation for the limit density profile $\sigma$.
\end{rem}

\begin{rem} \label{r:limit-eq}
Observe that, when $\beta=1$, from Proposition \ref{p:constr_a} we get that $\omega=\curlh u^h=-\Delta_h\theta$. Hence, an equivalent formulation of equation \eqref{eq:limit-a} is
$$
\d_t\big(-\Delta_h\theta-\s\big)\,+\,\nabla^\perp_h\theta\cdot\nabla_h\Delta_h\theta\,+\,\mu\,\Delta^2\theta\,=\,0\,,
$$
which may look more familiar to the reader (see e.g. the limit equations in \cite{F-G-N}, \cite{F_MA}).
\end{rem}

\section{Convergence in the case $\alpha=0$} \label{s:limit-0}

Let us now take $\alpha=0$ and pass to the limit in this case. Recall that, now, the equations are set in the $2$-dimensional domain $\R^2$, and we have taken $\g>1$ in \eqref{hyp:P}.

The first important step is to make the smallness of the functions $r_\veps$'s quantitative:
this is also the essential reason for our restricting to a $2$-D domain when $\alpha=0$.
After that, the rest of the convergence proof is pretty similar to the arguments exposed above for the case $\alpha>0$. For this reason, we will only sketch
those arguments.

The last part of this section is devoted to a conditional convergence result (in the same spirit of Theorem 5.8 of \cite{F-G_RMI}),
where we are able to identify a complete system of equations describing the limit dynamics.
However, the result is only conditional: for getting it, we need to impose \textsl{a priori} higher order \emph{uniform bounds} for the family of velocity fields
(and additional regularity on the initial densities), which seem to be hardly satisfied.

\subsection{Smallness of the density fluctuations} \label{ss:small}

When $\alpha=0$, the smallness of the density fluctuation functions $r_\veps$ does not come from the smallness of the Mach number, which is now of order $1$.
In order to get that property, we resort to an interpolation argument, in the same spirit of the one used in \cite{F-G_RMI}.

\begin{prop} \label{p:s-uniform}
There exist $0<\wtilde s<1$ and $0<\k<1$ such that the uniform embeddings
$$ 
\left(\frac{1}{\veps^\k}\;r_\veps\right)_\veps\;\subset\;L^\infty\bigl([0,T];H^{-\wtilde s}(\Omega)\bigr)\qquad\mbox{ and }\qquad
\left(\frac{1}{\veps^\k}\;r_\veps\,u_\veps\right)_\veps\;\subset\;L^2\bigl([0,T];H^{-\wtilde s-\delta}(\Omega)\bigr)
$$ 
hold true for any $T>0$ and all $\delta>0$ arbitrarily small.
\end{prop}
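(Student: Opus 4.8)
The plan is to obtain the claimed smallness by interpolating two pieces of information on $r_\veps=\rho_\veps-1$ which are already at our disposal: an order-one bound in a space of mild negative regularity, and an order-$\veps$ bound in a very negative space. First, the dual Sobolev embedding \eqref{ub:rho_e-trho} tells us that $(r_\veps)_\veps$ is uniformly bounded in $L^\infty_T(H^{-s_1})$, with $s_1=(2-\g)/\g$ (and $s_1=0$ when $\g\geq2$); observe that, since $\g>1$, one always has $0\leq s_1<1$. Second, recalling that $\s_\veps=r_\veps/\veps$, the bound \eqref{ub:s_2D} (equivalently \eqref{cv:s_2D}), which crucially rests on the structure of the wave system \eqref{eq:waves} in the present $2$-D setting, says that $(\s_\veps)_\veps$ is uniformly bounded in $L^\infty_T(H^{-b})$ for any fixed $b>3$; hence $\|r_\veps\|_{L^\infty_T(H^{-b})}\leq C\,\veps$.

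Next I would interpolate between these two bounds. Fix any $b>3$ and, for a parameter $\theta\in\,]0,1[\,$ to be chosen, set $\wtilde s:=(1-\theta)\,s_1+\theta\,b$, so that $-\wtilde s$ is the intermediate index between $-s_1$ and $-b$. The interpolation inequality for Sobolev spaces then gives, for almost every $t$,
$$
\|r_\veps(t)\|_{H^{-\wtilde s}}\,\leq\,\|r_\veps(t)\|_{H^{-s_1}}^{1-\theta}\,\|r_\veps(t)\|_{H^{-b}}^{\theta}\,\leq\,C\,\veps^{\theta}\,.
$$
Since $s_1<1$, choosing $\theta>0$ small enough that $\theta<(1-s_1)/(b-s_1)$ ensures $0<\wtilde s<1$; setting $\k:=\theta$ then yields the first uniform embedding $\big(\veps^{-\k}r_\veps\big)_\veps\subset L^\infty_T(H^{-\wtilde s})$, with $0<\wtilde s<1$ and $0<\k<1$ as required.

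For the second embedding I would combine this with the velocity bound and a product estimate. By the uniform bounds \eqref{ub:u-Du} and \eqref{est:u-L^2}, the family $(u_\veps)_\veps$ is uniformly bounded in $L^2_T(H^1)$. Since $0<\wtilde s<1$ we have $-1<-\wtilde s<1$, so Corollary \ref{c:product-2}(ii) applies and the product is continuous from $H^{-\wtilde s}\times H^1$ into $H^{-\wtilde s-\de}$ for every $\de>0$ arbitrarily small. Therefore
$$
\big\|\veps^{-\k}\,r_\veps\,u_\veps\big\|_{L^2_T(H^{-\wtilde s-\de})}\,\leq\,C\,\big\|\veps^{-\k}\,r_\veps\big\|_{L^\infty_T(H^{-\wtilde s})}\,\|u_\veps\|_{L^2_T(H^1)}\,\leq\,C\,,
$$
which is exactly the second claimed uniform embedding.

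I do not expect a genuine obstacle inside this proposition, since both input bounds are already established; the delicate point has in fact been pushed upstream, into the derivation of \eqref{ub:s_2D} from the wave system. The only care needed here is to guarantee $\wtilde s<1$, so that Corollary \ref{c:product-2}(ii) can be invoked to give sense to the product $r_\veps\,u_\veps$; this is precisely where the hypotheses $\g>1$ and $d=2$ enter, through the inequality $s_1<1$, and it clarifies why the endpoint $\alpha=0$ forces the two-dimensional restriction.
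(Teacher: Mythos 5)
Your proof is correct and follows essentially the same route as the paper's: interpolation between the order-one bound \eqref{ub:rho_e-trho} in $L^\infty_T(H^{-s_1})$ and the order-$\veps$ bound \eqref{ub:s_2D} in $L^\infty_T(H^{-s})$, $s>3$, followed by the product law of Corollary \ref{c:product-2}(ii) together with $\big(u_\veps\big)_\veps\subset L^2_T(H^1)$ for the second embedding. Your explicit constraint $\k<(1-s_1)/(b-s_1)$ merely spells out the paper's ``taking $\k$ small enough'' to guarantee $\wtilde s<1$, which is indeed the only point requiring care.
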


\begin{proof}
Recall that, by \eqref{ub:rho_e-trho}, the sequence $\big(r_\veps\big)_\veps$ is uniformly bounded in $L^\infty_T(H^{-s_1})$ for all $T>0$, where $s_1<1$. On the other hand,
by \eqref{ub:s_2D}, we know that $\big(\s_\veps\big)_\veps$ is uniformly bounded in $L^\infty_T(H^{-s})$, for any $s>3$.

Keeping in mind that $\s_\veps\,=\,r_\veps/\veps$, fixed an $s>3$, an interpolation of the previous uniform bounds yields
$$
\frac{1}{\veps^\kappa}\|r_\veps\|_{L^\infty_T(H^{-a})}\,\leq\,C\,\|r_\veps\|_{L^\infty_T(H^{-s_1})}^{1-\k}\;\left(\frac{1}{\veps}\,\|r_\veps\|_{L^\infty_T(H^{-s})}\right)^{\!\!\k}\,,
$$
under the condition that $a\,=\,a(\k)\,=\,(1-\k)\,s_1\,+\,\k\,s$, for some $\k\in\,]0,1[\,$. Taking $\k$ small enough and setting $\wtilde s\,=\,a(\k)$ entails the former claimed bound.

As for the latter uniform bound, it is a straightforward consequence of the previous one, of the property $\big(u_\veps\big)_\veps\,\subset\,L^2_T(H^1)$ and point (ii) in Corollary \ref{c:product-2}.
\end{proof}

Thanks to the previous result, we can establish the equivalent of Lemma \ref{l:u-V} and Corollary \ref{c:u-V}. Remark that, here, we have not regularised any quantity yet.
The proof is straightforward, hence omitted.
\begin{coroll} \label{c:u-V_2D}
For all $\veps\in\,]0,1]$, set $V_\veps\,=\,\rho_\veps\,u_\veps$. Let $0<\k<1$ and $0<\wtilde s<1$ be the indices defined in Proposition \ref{p:s-uniform}. Then
$$
V_{\veps}\,=\,u_{\veps}\,+\,\veps^\k\,\mc V_{\veps}\,,
$$
where the sequence $\big(\mc V_{\veps}\big)_\veps\,\subset\,L^2_T(H^{-\wtilde s})$ for all $T>0$ fixed.

Moreover, let us define $\eta_\veps\,:=\,\curl V_\veps$ and $\omega_\veps\,:=\,\curl u_\veps$. For every $\veps\in\,]0,1]$, one has
the following properties:
\begin{align*}
&\eta_{\veps}\,=\,\omega_{\veps}\,+\,\veps^\k\,\z_{\veps}\qquad\qquad \mbox{ and }\qquad\qquad
\div V_{\veps}\,=\,\veps^\beta\,\theta_\veps\,+\,\veps^\k\,d_\veps\,,
\end{align*}
where both sequences $\big(\z_\veps\big)_\veps$ and $\big(d_\veps\big)_\veps$ are uniformly bounded in $L^2_T(H^{-\wtilde s-1})$, for all $T>0$ fixed.
\end{coroll}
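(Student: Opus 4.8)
The plan is simply to unwind the definitions and read off the decompositions, since the genuinely analytic content has already been packaged into Proposition \ref{p:s-uniform}. First I would recall that in the regime $\alpha=0$ one has $\rho_\veps = 1 + r_\veps$, so that
$$
V_\veps \,=\, \rho_\veps\, u_\veps \,=\, u_\veps \,+\, r_\veps\, u_\veps\,.
$$
Setting $\mc V_\veps := \veps^{-\k}\, r_\veps\, u_\veps$ immediately gives the identity $V_\veps = u_\veps + \veps^\k\, \mc V_\veps$. The uniform bound on $\big(\mc V_\veps\big)_\veps$ is then nothing but the second assertion of Proposition \ref{p:s-uniform}: for every $T>0$ and every small $\de>0$, $\big(\mc V_\veps\big)_\veps$ is uniformly bounded in $L^2_T(H^{-\wtilde s - \de})$, which is the claimed bound up to the harmless index loss discussed below.

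For the vorticity and divergence decompositions I would apply the operators $\curl$ and $\div$ to the identity $V_\veps = u_\veps + \veps^\k \mc V_\veps$. Recalling that in two dimensions both $\curl$ and $\div$ are first-order scalar differential operators, hence map $H^a$ continuously into $H^{a-1}$ for every $a\in\R$, one gets
$$
\eta_\veps \,=\, \curl V_\veps \,=\, \omega_\veps \,+\, \veps^\k\, \z_\veps\,, \qquad\qquad \z_\veps \,:=\, \curl \mc V_\veps\,,
$$
with $\big(\z_\veps\big)_\veps$ uniformly bounded in $L^2_T(H^{-\wtilde s - 1})$. Likewise, since by definition $\theta_\veps = \veps^{-\beta}\div u_\veps$ (see Subsection \ref{ss:further}), taking the divergence gives
$$
\div V_\veps \,=\, \div u_\veps \,+\, \veps^\k\, \div \mc V_\veps \,=\, \veps^\beta\, \theta_\veps \,+\, \veps^\k\, d_\veps\,, \qquad\qquad d_\veps \,:=\, \div \mc V_\veps\,,
$$
and $\big(d_\veps\big)_\veps$ is uniformly bounded in $L^2_T(H^{-\wtilde s - 1})$ by the same continuity of $\div$.

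There is no genuine obstacle here: the statement is a direct algebraic consequence of the decomposition $\rho_\veps = 1 + r_\veps$ together with the quantitative smallness of the density fluctuations supplied by Proposition \ref{p:s-uniform}. The only point deserving a word of care is the bookkeeping of Sobolev indices: the product bound in Proposition \ref{p:s-uniform} costs an arbitrarily small $\de$ (coming from the product rule of Corollary \ref{c:product-2}(ii)), so strictly speaking $\mc V_\veps$, $\z_\veps$ and $d_\veps$ a priori live in $H^{-\wtilde s - \de}$ and $H^{-\wtilde s - 1 - \de}$. This is harmless and is absorbed by reading $\wtilde s$ as that slightly enlarged exponent, which still satisfies $\wtilde s<1$; this is precisely why the statement can be recorded with the proof omitted.
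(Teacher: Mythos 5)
Your proof is correct and is precisely the straightforward argument the paper has in mind when it omits the proof: decompose $\rho_\veps=1+r_\veps$, set $\mc V_\veps=\veps^{-\k}\,r_\veps\,u_\veps$, invoke Proposition \ref{p:s-uniform}, and apply the first-order operators $\curl$ and $\div$ (noting $\div u_\veps=\veps^\beta\theta_\veps$) with the one-derivative loss. Your remark about absorbing the arbitrarily small $\de$ into $\wtilde s$ (possible since $\wtilde s<1$ with slack, by taking $\k$ slightly smaller in Proposition \ref{p:s-uniform}) correctly resolves the only bookkeeping point in the statement.
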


Another consequence of Proposition \ref{p:s-uniform} is that it allows to reduce the nonlinearity of the convective term. More precisely, the following statement, analogous to Lemma \ref{l:1_conv},
holds true.
\begin{lemma} \label{l:1_conv_0}
For any test function $\psi\,\in\,\mc{D}\bigl([0,T[\,\times\R^2;\R^2\bigr)$, one has
$$
\lim_{\veps\ra0^+}\left|\int^T_0\int_{\R^2}\rho_\veps u_\veps\otimes u_\veps:\nabla\psi\,dx\,dt\,-\,\int^T_0\int_{\R^2} u_\veps\otimes u_\veps:\nabla\psi\,dx\,dt\right|\,=\,0\,.
$$
\end{lemma}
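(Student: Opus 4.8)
The plan is to exploit the quantitative smallness of the density fluctuations established in Proposition \ref{p:s-uniform}, which is precisely what replaces the pointwise bound $\rho_\veps-1=O(\veps^\alpha)$ used in Lemma \ref{l:1_conv}. First I would rewrite the difference of the two integrals as a single term carrying the factor $\rho_\veps-1=r_\veps$:
$$
\int^T_0\!\!\int_{\R^2}\rho_\veps u_\veps\otimes u_\veps:\nabla\psi\,-\,\int^T_0\!\!\int_{\R^2}u_\veps\otimes u_\veps:\nabla\psi\,=\,\int^T_0\!\!\int_{\R^2}\big(r_\veps\,u_\veps\big)\otimes u_\veps:\nabla\psi\,,
$$
so that it is enough to show that the last integral vanishes as $\veps\ra0^+$. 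The idea is to regard the factor $r_\veps u_\veps$ --- for which Proposition \ref{p:s-uniform} provides both negative regularity and a gain of a power $\veps^\k$ --- as paired, in a duality bracket, against the remaining factor $u_\veps\,\nabla\psi$, which is smooth in space thanks to the smoothness of $\psi$.

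Concretely, I would fix $\delta>0$ so small that $\wtilde s+\delta\leq1$ (possible since $\wtilde s<1$), and use the componentwise identity
$$
\big(r_\veps u_\veps\big)\otimes u_\veps:\nabla\psi\,=\,\sum_{i,j}\big(r_\veps\,u^i_\veps\big)\,\big(u^j_\veps\,\d_j\psi^i\big)\,.
$$
Since $\psi$ is smooth and compactly supported, the product $u^j_\veps\,\d_j\psi^i$ stays in $H^1$, with norm controlled by that of $u_\veps$; combining this with the embedding $H^1\hookrightarrow H^{\wtilde s+\delta}$ and the uniform bound $\big(u_\veps\big)_\veps\subset L^2_T(H^1)$ coming from \eqref{est:u-L^2} and \eqref{cv:u}, I get
$$
\big\|u^j_\veps\,\d_j\psi^i\big\|_{L^2_T(H^{\wtilde s+\delta})}\,\leq\,C_\psi\,\big\|u_\veps\big\|_{L^2_T(H^1)}\,\leq\,C_\psi\,.
$$
On the other hand, Proposition \ref{p:s-uniform} gives $\big\|r_\veps u_\veps\big\|_{L^2_T(H^{-\wtilde s-\delta})}\leq C\,\veps^\k$. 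Pairing the two factors in the $H^{-\wtilde s-\delta}\times H^{\wtilde s+\delta}$ duality at each time and then applying Cauchy--Schwarz in time, I obtain
$$
\left|\int^T_0\!\!\int_{\R^2}\big(r_\veps u_\veps\big)\otimes u_\veps:\nabla\psi\right|\,\leq\,C_\psi\,\big\|r_\veps u_\veps\big\|_{L^2_T(H^{-\wtilde s-\delta})}\,\big\|u_\veps\big\|_{L^2_T(H^1)}\,\leq\,C\,\veps^\k\,,
$$
which tends to $0$ since $\k>0$, proving the claim.

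The proof is thus essentially a bookkeeping of Sobolev indices, the real content being concentrated in Proposition \ref{p:s-uniform}; I expect the only point requiring care to be the justification that the space integral coincides with the stated duality bracket. This is legitimate because, $\psi$ being smooth, $u^j_\veps\,\d_j\psi^i$ is a genuine $L^2_T(H^{\wtilde s+\delta})$ function and $r_\veps u^i_\veps$ a genuine $L^2_T(H^{-\wtilde s-\delta})$ distribution, so that the pairing reduces to the integral by density of $\mc D$ in $H^{\wtilde s+\delta}$. It is worth stressing that the gain $\veps^\k$ is indispensable here --- unlike in Lemma \ref{l:1_conv}, no smallness is available from the Mach number when $\alpha=0$ --- and that Proposition \ref{p:s-uniform} is exactly the place where the restriction to a two-dimensional domain is used.
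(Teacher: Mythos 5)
Your proof is correct and follows essentially the same route as the paper: both arguments extract the gain $\veps^\k$ from Proposition \ref{p:s-uniform} and conclude by routine Sobolev-index bookkeeping against the smooth, compactly supported test function. The only (immaterial) difference is the grouping of the three factors: the paper bounds $u_\veps\otimes u_\veps$ in $L^1_T(H^{1-\de})$ by the product rules of Corollary \ref{c:product-2} and then multiplies by $\veps^{-\k}\,r_\veps\in L^\infty_T(H^{-\wtilde s})$, whereas you invoke the second bound of Proposition \ref{p:s-uniform}, namely $\veps^{-\k}\,r_\veps\,u_\veps\in L^2_T(H^{-\wtilde s-\de})$, and pair it in the $H^{-\wtilde s-\de}\times H^{\wtilde s+\de}$ duality against $u_\veps\,\nabla\psi\in L^2_T(H^1)\hookrightarrow L^2_T(H^{\wtilde s+\de})$ --- that second bound being itself derived from the first by the same product laws, so the two arguments are equivalent in substance.
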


\begin{proof}
The proof simply relies on uniform bounds and continuity properties of the product in Sobolev spaces. First of all, we decompose $\rho_\veps$ according to
$$
\rho_\veps\,=\,1\,+\,\veps^\k\left(\frac{1}{\veps^\k}\,r_\veps\right)\,.
$$
In view of \eqref{ub:u-Du} and \eqref{est:u-L^2}, point (iii) of Corollary \ref{c:product-2} implies that $\big(u_\veps\otimes u_\veps\big)_\veps$ is uniformly bounded in
$L^1_T(H^{1-\de})$, for any $T>0$ fixed and any $\de>0$ small enough. On the other hand, thanks to Proposition \ref{p:s-uniform} we know that
$\big(\veps^{-\k}\,r_\veps\big)_\veps$ is uniformly bounded in $L^\infty_T(H^{-\wtilde s})$, with $\wtilde s<1$. Therefore, taking $\de>0$ small enough, we deduce
from point (i) of Corollary \ref{c:product-2} that
$$
\left\|\frac{1}{\veps^\k}\,r_\veps\,u_\veps\otimes u_\veps\right\|_{L^1_T(H^{-\wtilde s-\de})}\,\leq\,C(T)\,,
$$
for a positive constant $C(T)$ depending only on the fixed time $T$. From this uniform bound, the result easily follows.
\end{proof}

After those preliminaries, the rest of the convergence proof is pretty much similar to the previous one, for $\alpha>0$. Let us draw it for the reader's convenience.

\subsection{Convergence in the weak formulation of the equations} \label{ss:conv_0}

In this subsection, we complete the proof to Theorem \ref{t:0}: namely, we pass to the limit in the weak formulation of our equations. Recall that, throughout
this part, the equations are set in $\R_+\times\R^2$, and we have assumed $\alpha=0$ and $\g>1$ in \eqref{hyp:P}.

Also in this case, we have to pass to the limit in the momentum equation only.
Indeed, we have already shown in Proposition \ref{p:constr_0} that the mass equation simply vanishes in the limit; the reason is that we have not enough regularity
on the functions $\s_\veps$ to infer an equation for them.

Therefore, given a test function $\psi\,=\,\nabla^\perp\vphi$, where $\vphi\in\mc D\big([0,T[\,\times\R^2\big)$ for some $T>0$, let us consider the equality
\begin{align} \label{eq:weak-0}
\int^T_0\!\!\!\int_{\R^2}\left(-\rho_\veps u_\veps\cdot\d_t\psi-\rho_\veps u_\veps\otimes u_\veps:\nabla\psi+\dfrac{1}{\veps}\rho_\veps u^\perp_\veps\cdot\psi+
\mu\nabla u_\veps:\nabla\psi\right)=
\int_{\R^2}\rho_{0,\veps}u_{0,\veps}\cdot\psi(0)\,.
\end{align}

It goes without saying that passing to the limit on the initial datum and viscosity terms present no difficulty, and can be done as in Subsection \ref{ss:first-conv}.
The convergence of the time derivative term is also easy, once one uses e.g. Proposition \ref{p:s-uniform}.
In addition, the Coriolis term can be treated exactly as in the $3$-D case: we obtain
\begin{align*}
\frac{1}{\veps}\int^T_0\!\!\!\int_{\R^2}\rho_\veps\,u_\veps^\perp\cdot\nabla^\perp\vphi\,&=\,-\int_0^T\!\!\!\int_{\R^2}\s_\veps\,\d_t\vphi\,-\,\int_{\R^2}r_{0,\veps}\,\vphi\;
\longrightarrow\;-\int_0^T\!\!\!\int_{\R^2}\s\,\d_t\vphi\,-\,\int_{\R^2}r_{0}\,\vphi\,,
\end{align*}
where we recall that $r_0$ has been introduced in \eqref{cv:in-data}.

Therefore, in order to complete the passage to the limit, and then the proof of Theorem \ref{t:0}, it remains us to prove the convergence of the convective term
$\rho_\veps\,u_\veps\otimes u_\veps$. We observe that, in view of Lemma \ref{l:1_conv_0} above, it is enough to pass to the limit in the integral
$$
-\,\int^T_0\int_{\R^2} u_\veps\otimes u_\veps:\nabla\psi\,dx\,dt\,,\qquad\qquad\mbox{ with }\qquad \psi\,=\,\nabla^\perp\vphi\,.
$$
Our method to prove convergence is based on compensated compactness, analogously to what done for $0<\alpha<1$; actually, the argument is simpler here, because
we are in two space dimensions, hence we have to deal only with a term similar to $\mc T^1_{\veps,M}$ above.

Let us sketch the argument. Omitting a standard regularisation process, we can suppose the velocity field $u_\veps$, and all the other quantities, to be smooth with respect to the space variables;
on the contrary, concerning time integrability of the different quantities, we cannot hope for anything better than what is given by the uniform bounds established before.
Owing to the regularity in space, we can inegrate by parts: we find
\begin{align*}
-\,\int^T_0\int_{\R^2} u_\veps\otimes u_\veps:\nabla\psi\,&=\,\int^T_0\int_{\R^2}\div\big(u_\veps\otimes u_\veps\big)\cdot\nabla^\perp\vphi \\
&=\,\int^T_0\int_{\R^2}\left(\div u_\veps\,u_\veps\,+\,\frac{1}{2}\,\nabla\left|u_\veps\right|^2\,+\,\omega_\veps\,u_\veps^\perp\right)\cdot\nabla^\perp\vphi\,,
\end{align*}
where we have resorted to the same computations of \eqref{eq:T^1}, adapted to the $2$-D case. Since $\div u_\veps$ is of order $\veps^\beta$ and the test function is divergence-free,
the first two terms in the right-hand side converge to $0$ in the limit when $\veps\ra0^+$. So, it remains us to pass to the limit in
the bilinear term $\omega_\veps\,u_\veps^\perp$.

For this, first of all we observe that, owing to Corollary \ref{c:u-V_2D}, it is enough to consider the product $\eta_\veps\,u^\perp_\veps$:
at this point, we need some strong convergence property. Working with the regularity index $s_0$ defined in \eqref{def:s} where we take $\alpha=0$, we can apply here the same agument of
Paragraph \ref{sss:strong-vort} to infer the following property: for all $\de>0$, there exists a $\veps_\de$ such that
$$
\big(\eta_\veps\big)_{\veps\leq\veps_\de}\qquad\qquad\mbox{ is compact in }\qquad L^2\big(\,]\de,T[\,;L^2_{\rm loc}(\R^2)\big)\,.
$$
Therefore, on the one hand we gather the convergence
$$
\int^T_\de\int_{\R^2}\eta_\veps\,u_\veps^\perp\cdot\nabla^\perp\vphi\,\longrightarrow\,\int^T_\de\int_{\R^2}\omega\,u^\perp\cdot\nabla^\perp\vphi
$$
when $\veps\ra0^+$; on the other hand, by uniform bounds one has
$$
\left|\int^\de_0\int_{\R^2}\eta_\veps\,\,u^\perp_\veps\cdot\nabla^\perp\vphi\right|\,\leq\,C_\de\,,
$$
with $C_\de\longrightarrow0$ for $\de\ra0^+$. Therefore, by arbitrariness of $\de>0$, we finally deduce the convergence
$$
-\,\int^T_0\int_{\R^2} u_\veps\otimes u_\veps:\nabla\nabla^\perp\vphi\,dx\,dt\,\longrightarrow\int^T_0\int_{\R^2} \omega\,u^\perp_\veps\cdot\nabla^\perp\vphi\,dx\,dt\,=\,
\int^T_0\int_{\R^2} \omega\,u_\veps\cdot\nabla\vphi\,dx\,dt\,.
$$

The previous convergence property completes the proof of Theorem \ref{t:0}.

\subsection{A conditional convergence result} \label{ss:dens-full}

In this subsection, we state and prove a convergence result for the fully non-homogeneous case, where we are able to pass to the limit to the full system,
in which the dynamics of the density fluctuation function and the velocity field are decoupled.

This is just a conditional result, because  very strong assumptions are required on the sequence of weak solutions: in particular, we need to assume uniform bounds in higher norms for the sequence
of velocity fields (see in particular conditions (ii)-(iii)-(iv) in Theorem \ref{t:2D-full} below). Those bounds cannot be deduced from classical energy estimates, while higher order
energy estimates seem to be not uniform in the singular parameter $\veps$.

The statement is the following one. Recall that $\Omega\,=\,\R^2$ in the case $\alpha=0$.
\begin{thm} \label{t:2D-full}
With the notation and under the assumptions of Theorem \ref{t:0}, assume moreove that the following conditions hold true:
\begin{enumerate}[(i)]
 \item $\bigl(r_{0,\veps}\bigr)_\veps\,\subset\,H^{\mf{b}}(\Omega)$, for some $\mf{b}\in\,]0,1]$;
 \item $\bigl(u_\veps\bigr)_\veps\,\subset\,L^{\infty}_{\rm loc}\bigl(\R_+;H^{1}(\Omega)\bigr)\cap L^{2}_{\rm loc}\bigl(\R_+;H^{2}(\Omega)\bigr)$;
 \item $\bigl(u_\veps\bigr)_\veps\,\subset\,\mc C^{0,\mf a}_{\rm loc}\bigl(\R_+;L^{2}(\Omega)\bigr)$, for some $\mf{a}\in\,]0,1[\,$;
 \item $\bigl(\div u_\veps\bigr)_\veps\,\subset\,L^{1}_{\rm loc}\bigl(\R_+;L^\infty(\Omega)\bigr)$.
\end{enumerate}
Let $\theta$ be the quantity introduced in \eqref{cv:div-u}. Let $r_0$ and $u_0$ be the functions defined in \eqref{cv:in-data}.
Finally, let $\delta_{1\beta}$ the ``Kronecker delta'', namely $\delta_{1\beta}=1$ if $\beta=1$ and $\delta_{1\beta}=0$ otherwise, where $\beta\geq1$ is the parameter
appearing in \eqref{eq:sing-NSC_2D}.

Then there exist a distribution $\Pi$ over $\R_+\times\Omega$ such that the limit points $\big(\sigma,u,\theta)$ satisfy the system
\begin{equation} \label{eq:dens-full}
\left\{\begin{array}{l}
        \d_t\s\,+\,u\cdot\nabla\s\,+\,\delta_{1\beta}\,\theta\,=\,0 \\[1ex]
        \d_tu\,+\,u\cdot\nabla u\,+\,\nabla\Pi\,+\,\s\,u^\perp\,-\,\de_{1\beta}\,\nabla^\perp(-\Delta)^{-1}\theta\,-\,\mu\,\Delta u\,=\,0 \\[1ex]
         \div u\,=\,0\,,
       \end{array}
\right. 
\end{equation}
with initial data $\s_{|t=0}\,=\,r_0$ and $u_{|t=0}\,=\,u_0$.
\end{thm}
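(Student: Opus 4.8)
The plan is to use the strong \emph{a priori} hypotheses (i)--(iv) to upgrade the weak convergences of Theorem~\ref{t:0} into strong ones, so that one may pass to the limit directly in the mass and in the (projected) momentum equation, rather than only in the vorticity formulation. First I would extract strong compactness of the velocity: conditions (ii) and (iii) provide, uniformly in $\veps$, a bound in $L^\infty_{\rm loc}(\R_+;H^1)\cap L^2_{\rm loc}(\R_+;H^2)$ together with Hölder equicontinuity in time with values in $L^2$. A localised Arzel\`a--Ascoli argument (Rellich's theorem for $H^1_{\rm loc}\hookrightarrow\hookrightarrow L^2_{\rm loc}$ in space, equicontinuity in time) then yields $u_\veps\to u$ strongly in $\mc C\bigl([0,T];L^2_{\rm loc}(\R^2)\bigr)$, and interpolating with the uniform $L^2_T(H^2)$ bound gives $u_\veps\to u$ strongly in $L^2_T(H^s_{\rm loc})$ for every $s<2$.

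The second, and crucial, preliminary step is a \emph{uniform} bound for $\s_\veps=(\rho_\veps-1)/\veps$ in a genuine function space. Writing the mass equation in conservative form $\d_t\s_\veps+\div(\s_\veps u_\veps)+\veps^{\beta-1}\theta_\veps=0$ (recall that $\div u_\veps=\veps^\beta\theta_\veps$ and $\s_\veps{}_{|t=0}=r_{0,\veps}$), I would run the $L^2$ energy estimate: after integrating by parts the transport term reduces to $\tfrac12\int\s_\veps^2\,\div u_\veps$, so that
\begin{equation*}
\frac{d}{dt}\left\|\s_\veps\right\|_{L^2}^2\,\leq\,\left\|\div u_\veps\right\|_{L^\infty}\left\|\s_\veps\right\|_{L^2}^2\,+\,2\,\veps^{\beta-1}\left\|\s_\veps\right\|_{L^2}\left\|\theta_\veps\right\|_{L^2}\,.
\end{equation*}
Here assumption (iv) guarantees $\|\div u_\veps\|_{L^1_T(L^\infty)}\leq C$, assumption (i) gives $\|r_{0,\veps}\|_{L^2}\leq C$, and $\beta\geq1$ makes $\veps^{\beta-1}\leq1$; Gr\"onwall's lemma therefore furnishes a bound for $(\s_\veps)_\veps$ in $L^\infty_T(L^2)$ that is \emph{uniform} in $\veps$. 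This is the heart of the matter, and precisely what is missing in Theorem~\ref{t:0}, where $\s_\veps$ is controlled only in a very negative Sobolev space: the $L^2$ control is exactly what lets the product $\s_\veps\,u_\veps$ acquire a meaning in the limit. Up to extraction, one then has $\s_\veps\stackrel{*}{\rightharpoonup}\s$ in $L^\infty_T(L^2)$.

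With these two compactness properties in hand, passing to the limit in the conservative mass equation is routine: the product $\s_\veps u_\veps$ converges to $\s\,u$ in $\mc D'$, by weak $L^2$ convergence of $\s_\veps$ tested against the strong $L^2_{\rm loc}$ convergence of $u_\veps$, while $\veps^{\beta-1}\theta_\veps\rightharpoonup\delta_{1\beta}\,\theta$ since $\theta_\veps\rightharpoonup\theta$ in $L^2_T(L^2)$ (see \eqref{cv:div-u}) and $\veps^{\beta-1}\to\delta_{1\beta}$. Recalling that $\div u=0$, this yields the first equation $\d_t\s+u\cdot\nabla\s+\delta_{1\beta}\,\theta=0$, with $\s_{|t=0}=r_0$. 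To obtain the momentum equation I would apply the Leray--Helmholtz projector $\P$ onto divergence-free fields, which annihilates the two gradient terms, namely the pressure $\nabla P(\rho_\veps)$ and the bulk-viscosity term $\veps^{-2\beta}\nabla\div u_\veps$. The convective term passes to the limit as $\P\div(u\otimes u)$, thanks to $\rho_\veps-1=\veps\,\s_\veps\to0$ in $L^\infty_T(L^2)$ and the strong convergence of $u_\veps$; the viscous and time-derivative terms are immediate. The only delicate piece is the Coriolis term: splitting $V_\veps=\rho_\veps u_\veps=u_\veps+\veps\,\s_\veps u_\veps$, one has $\tfrac1\veps\P\bigl(V_\veps^\perp\bigr)=\tfrac1\veps\P\bigl(u_\veps^\perp\bigr)+\P\bigl(\s_\veps u_\veps^\perp\bigr)$. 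Since the divergence-free part of $u_\veps^\perp$ is exactly $\nabla^\perp$ of the potential of $u_\veps$, one computes $\P\bigl(u_\veps^\perp\bigr)=\nabla^\perp\Delta^{-1}\div u_\veps=\veps^\beta\,\nabla^\perp\Delta^{-1}\theta_\veps$, so that the singular prefactor produces $\tfrac1\veps\P\bigl(u_\veps^\perp\bigr)=\veps^{\beta-1}\nabla^\perp\Delta^{-1}\theta_\veps\rightharpoonup-\delta_{1\beta}\,\nabla^\perp(-\Delta)^{-1}\theta$, whereas $\P\bigl(\s_\veps u_\veps^\perp\bigr)\to\P\bigl(\s\,u^\perp\bigr)$ by the same weak$\times$strong argument. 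Collecting these limits gives the divergence-free projection of the second equation in \eqref{eq:dens-full}, and the pressure $\nabla\Pi$ is recovered, via de Rham's theorem, as the gradient discrepancy between the full limiting relation and its projection; identifying $u_{|t=0}=u_0$ from \eqref{cv:in-data} closes the system.

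The main obstacle I anticipate is not the passage to the limit itself, which becomes formal once the uniform $L^\infty_T(L^2)$ bound on $\s_\veps$ is available, but rather the rigorous justification of that estimate at the level of finite-energy weak solutions: the $L^2$ computation above must be carried out in the renormalised (DiPerna--Lions) sense, using that $u_\veps$ is smooth enough by (ii) and that $\div u_\veps\in L^1_T(L^\infty)$ by (iv), and the residual part of the density requires separate care in the range $1<\g<2$, where $\rho_\veps-1$ is only controlled in $L^\g$. Everything downstream rests on this single uniform bound, which is exactly why the strong conditions (i) and (iv) are imposed.
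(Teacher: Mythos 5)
Your proposal is correct in substance, but it runs on a genuinely different engine than the paper's own proof. The paper never performs a nonlinear estimate on the density: assumptions (i), (ii), (iv) are used to propagate \emph{positive} Sobolev regularity of $r_\veps$ (not of $\s_\veps$) through a Besov transport estimate for almost-Lipschitz velocity fields (Lemma \ref{l:transport} and Corollary \ref{c:reg-r}), and the uniform control of $\s_\veps$ is then extracted by a purely \emph{linear} device: integrating the vorticity balance $\d_t\big(\eta_\veps-\s_\veps\big)=\curl f_\veps$ in time, writing $\s_\veps(t)=\eta_\veps-\curl\big(\rho_{0,\veps}u_{0,\veps}\big)+r_{0,\veps}-\int_0^t\curl f_\veps\,d\tau$ and bounding each term, which yields only $\big(\s_\veps\big)_\veps\subset L^\infty_T(H^{\mf b'-2})$ (Proposition \ref{p:sigma_e}); compactness comes from (iii) via $\omega_\veps$, and the products $\s_\veps\,u_\veps$ are given sense by the Sobolev product laws of Corollary \ref{c:product-2}. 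You instead attack $\s_\veps$ directly with a renormalised $L^2$ energy estimate on $\d_t\s_\veps+\div(\s_\veps u_\veps)+\veps^{\beta-1}\theta_\veps=0$: the Gr\"onwall computation is right, and since the standing assumptions already give $\big(r_{0,\veps}\big)_\veps$ bounded in $L^2$, your route in fact renders hypothesis (i) superfluous; the weak$\times$strong limit passage then works as you describe, and your Coriolis identity $\tfrac1\veps\P\big(u_\veps^\perp\big)=\veps^{\beta-1}\nabla^\perp\Delta^{-1}\theta_\veps$ is exactly the projector version of the paper's manipulation with $\vphi=-(-\Delta)^{-1}\curl\psi$. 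What the paper's route buys is that it only ever uses the weak form of the continuity equation, as stipulated in Definition \ref{d:weak}; yours needs its renormalised form, which is not part of that definition — you correctly flagged this as the crux. It is, however, recoverable here: by (ii), $u_\veps\in L^2_T(H^2)\hookrightarrow L^2_T(W^{1,q})$ for every $q<\infty$ in dimension $2$, so the DiPerna--Lions commutator argument applies against $\rho_\veps-1\in L^\infty_T(L^2+L^\g)$, and in the delicate range $1<\g<2$ (where $\s_\veps\notin L^2$ a priori) the truncation errors close, e.g.\ with quadratic-to-linear truncations $b_k$, whose error terms are dominated by $\left\|\div u_\veps\right\|_{L^\infty}\int b_k(\rho_\veps)\,dx$ once $k\gtrsim\veps^{-1}$ and are thus absorbed into the Gr\"onwall inequality. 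In exchange for this renormalisation machinery you obtain a strictly stronger uniform bound, $L^\infty_T(L^2)$ versus the paper's $L^\infty_T(H^{\mf b'-2})$, and one fewer hypothesis.
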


It goes without saying that, under the previous assumptions, the convergence properties for $\bigl(u_\veps\bigr)_\veps$ and $\bigl(\s_\veps\bigr)_\veps$
stated in Theorem \ref{t:0} can be improved (see also Proposition \ref{p:sigma_e} below).
However, for simplicity we refrain from doing that: our focus here is on obtaining convergence to the full system rather than \eqref{eq:limit-0}.

We also remark that, according to Proposition \ref{p:constr_0}, $u\,=\,-\nabla^\perp\theta$, hence $\theta\,=\,(-\Delta)^{-1}\omega$, where $\omega=\curl u$. Therefore, system \eqref{eq:dens-full}
is in fact a system for the couple of unknowns $(\s,u)$.

The rest of this section is devoted to the proof of the previous result.

\subsubsection{Regularity of the density oscillations} \label{sss:sigma-reg}

The first step is to gain space regularity for the density oscillation functions $\sigma_\veps$. This is possible thanks to the additional assumptions (i), (ii) and (iv) in Theorem \ref{t:2D-full}.
Thanks to that property, and using also assumption (iii), we will then derive compactness for $\sigma_\veps$ in space-time.

The preliminary remark is that the only way to recover any information for $\sigma_\veps$ is to use \eqref{eq:curlV-sigma}, which in dimension $d=2$ becomes
\begin{equation} \label{eq:eta-s_2D}
\d_t\big(\eta_\veps\,-\,\s_\veps\big)\,=\,\curl f_\veps\,.
\end{equation}
Recall that we have set $\eta_\veps\,:=\,\curl V_\veps$. Hence, in order to improve the space regularity of $\sigma_\veps$, we need to improve the regularity of $\eta_\veps$ and $f_\veps$,
which in turn forces us to seek for additional smoothness of the functions $r_\veps$. Propagating this last property is possible thanks to a slight adaptation of Theorem 3.33 of \cite{B-C-D} (combined also
with Remark 3.35 therein), which deals with transport equations with a velocity field which is almost Lipschitz (see also \cite{D_2005} for further results in this direction).

\begin{lemma} \label{l:transport}
Let $-1< s<2$ and $T>0$ fixed. Let $u\in L^1_{T}(H^2)$ such that $\div u\in L^1_T(L^\infty)$, $r_0\in B^s_{2,\infty}$ and $g\in \wtilde{L}^1_{T}(B^s_{2,\infty})$ be given.
Then the continuity equation
$$
\d_tr\,+\,u\cdot\nabla r\,+\,r\,\div u\,=\,g
$$
admits a unique solution $r\in\mc C\big([0,T];\bigcap_{s'<s}B^{s'}_{2,\infty}\big)$, and the following estimates hold true, for all $\de>0$ arbitrarily small:
$$
\left\|r\right\|_{\wtilde{L}^\infty_T(B^{s-\de}_{2,\infty})}\,\leq\,C\,\exp\left(\frac{C}{\de}\left(\int^T_0\big(\left\|\nabla u\right\|_{H^1}\,+\,
\left\|\div u\right\|_{L^\infty}\big)\,dt\right)^{\!\!2}\right)\,\Big(\left\|r_0\right\|_{B^s_{2,\infty}}\,+\,\left\|g\right\|_{\wtilde{L}^1_T(B^{s}_{2,\infty})}\Big)\,.
$$
The constant $C$ only depends on $s$.
\end{lemma}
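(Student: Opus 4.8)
The plan is to establish the a priori estimate first and then recover existence by a standard regularization/compactness procedure, with uniqueness following from a direct $L^2$ argument; this is precisely the scheme of Theorem 3.33 of \cite{B-C-D}, the only genuine modifications being the presence of the source $g$, the zeroth-order term $r\,\div u$ (so that one treats the continuity equation rather than a pure transport equation), and the fact that in two dimensions $u\in H^2$ yields only $\nabla u\in H^1\hookrightarrow B^0_{\infty,\infty}$, hence an \emph{almost Lipschitz} — rather than Lipschitz — transport field. First I would localise the equation in frequency by applying $\Delta_j$, writing
$$
\d_t\Delta_j r\,+\,u\cdot\nabla\Delta_j r\,=\,\Delta_j g\,-\,\Delta_j(r\,\div u)\,+\,R_j\,,\qquad R_j\,:=\,[u\cdot\nabla,\Delta_j]\,r\,.
$$
An $L^2$ energy estimate on $\Delta_j r$, integrating by parts in the transport term (which produces the harmless term $\tfrac12\int\div u\,|\Delta_j r|^2$), gives for every $j\geq-1$
$$
\|\Delta_j r(t)\|_{L^2}\,\leq\,\|\Delta_j r_0\|_{L^2}\,+\,\int_0^t\Big(\|\Delta_j g\|_{L^2}+\|\Delta_j(r\,\div u)\|_{L^2}+\|R_j\|_{L^2}+\tfrac12\|\div u\|_{L^\infty}\|\Delta_j r\|_{L^2}\Big)\,d\tau\,.
$$

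The heart of the matter is the control of $R_j$. If $\nabla u$ were in $L^\infty$, the classical commutator estimate would give $\|R_j\|_{L^2}\lesssim c_j2^{-js}\|\nabla u\|_{L^\infty}\|r\|_{B^s_{2,\infty}}$ and, after weighting by $2^{js}$ and applying Grönwall, a \emph{loss-free} bound. Here only $\nabla u\in H^1(\R^2)\hookrightarrow B^0_{\infty,\infty}$ is available (recall $H^1=B^1_{2,2}\hookrightarrow B^0_{\infty,2}\hookrightarrow B^0_{\infty,\infty}$ by Proposition \ref{p:embed}), and $B^0_{\infty,\infty}\not\hookrightarrow L^\infty$: the field is merely log-Lipschitz. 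Decomposing $R_j$ through Bony's paraproduct \eqref{eq:bony} and estimating each piece by Proposition \ref{p:op} and Bernstein's inequalities (Lemma \ref{l:bern}), the cost of replacing $L^\infty$ by $B^0_{\infty,\infty}$ is a logarithmic factor, namely
$$
2^{js}\|R_j\|_{L^2}\,\lesssim\,(j+2)\,\|\nabla u\|_{B^0_{\infty,\infty}}\,\sup_{k}2^{ks}\|\Delta_k r\|_{L^2}\,,
$$
up to summable off-diagonal contributions; the same $(j+2)$ factor appears in $2^{js}\|\Delta_j(r\,\div u)\|_{L^2}$, since $\div u\in L^\infty\subset B^0_{\infty,\infty}$.

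This logarithmic growth in $j$ is exactly what precludes a loss-free bound, and it is absorbed by the Bahouri--Chemin device of a \emph{time-dependent regularity index}. Setting $V(\tau):=\|\nabla u(\tau)\|_{H^1}+\|\div u(\tau)\|_{L^\infty}$, introducing $\lambda(t)=\kappa\int_0^tV$, and working with $\rho_j(t):=2^{j(s-\lambda(t))}\|\Delta_j r(t)\|_{L^2}$, the differentiation of the weight produces a favourable term $-\kappa\,(j\log2)\,V\,\rho_j$ which, for $\kappa$ large enough, dominates the unfavourable $+(j+2)V\rho_j$ coming from the commutator and the $r\,\div u$ term; the remaining off-diagonal pieces are of convolution type and are closed by Grönwall's lemma applied to $\sup_j\rho_j=\|r\|_{B^{s-\lambda(t)}_{2,\infty}}$. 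Reinstating the prescribed loss $\delta$ — by subdividing $[0,T]$ and optimising $\kappa$ against the constraint $\lambda(T)\leq\delta$, which is where the balance between the logarithmic rate and the derivative budget produces both the factor $1/\delta$ and the square of $\int_0^TV$ — yields exactly
$$
\|r\|_{\wtilde{L}^\infty_T(B^{s-\delta}_{2,\infty})}\,\leq\,C\exp\!\Big(\tfrac{C}{\delta}\big(\textstyle\int_0^TV\,d\tau\big)^2\Big)\Big(\|r_0\|_{B^s_{2,\infty}}+\|g\|_{\wtilde{L}^1_T(B^s_{2,\infty})}\Big)\,.
$$
The restriction $-1<s<2$ is precisely what keeps $s$ inside the range where all the paraproduct and commutator estimates above, hence the whole scheme, are legitimate.

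For existence I would regularise the data and the velocity (e.g.\ by the cut-offs $S_n$ or a mollification), solve the resulting smooth Cauchy problems, propagate the estimate above uniformly in $n$, and pass to the limit; time-continuity in $\bigcap_{s'<s}B^{s'}_{2,\infty}$ then follows from the uniform bound (valid for every $\delta>0$) together with the equation, which controls $\d_t r$ in a space of lower regularity. Uniqueness is obtained by an $L^2$ energy estimate on the difference of two solutions, using only $\div u\in L^1_T(L^\infty)$ and Grönwall. The main obstacle is entirely contained in the two middle paragraphs: estimating the commutator for the non-Lipschitz field $u$ and bookkeeping the logarithmic loss so that it converts exactly into the arbitrarily small derivative loss $\delta$ at the price of the stated super-exponential constant; everything else is routine.
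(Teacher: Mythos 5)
Your proposal is correct in its overall architecture, but it takes a genuinely different route from the paper. The paper's proof is essentially a citation: it invokes Theorem 3.33 of \cite{B-C-D} (the losing estimate for transport equations with almost Lipschitz velocity fields, applied with $p=p_1=2$ and $d=2$), whose proof is precisely the Bahouri--Chemin scheme you reconstruct from scratch --- dyadic localisation, commutator estimate with logarithmic loss, time-dependent regularity index $s-\lam(t)$, Gr\"onwall producing the factor $\exp\big(C\big(\int_0^T V\big)^2/\de\big)$ --- and then reduces the whole lemma to a single verification: by Remark 3.35 of \cite{B-C-D}, the zeroth-order term $r\,\div u$ may be treated as a forcing term, since the product laws of Proposition \ref{p:op} give $\|\Delta_j(r\,\div u)(t)\|_{L^2}\leq C\,2^{-js'}\|r(t)\|_{B^{s'}_{2,\infty}}\|\div u(t)\|_{H^1\cap L^\infty}$ for all $s'\in\,]s-\de,s[\,$, with $\|\div u(t)\|_{H^1\cap L^\infty}$ integrable in time. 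Your self-contained rederivation buys independence from the black box and makes visible where the $1/\de$ and the square of $\int_0^T V$ come from; the paper's route is much shorter and isolates the only point that is genuinely new relative to the cited theorem, namely the treatment of $r\,\div u$.

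On that very point your sketch contains one estimate that is wrong as stated: the bound $2^{js}\|\Delta_j(r\,\div u)\|_{L^2}\,\lesssim\,(j+2)\,\|\div u\|_{B^0_{\infty,\infty}}\,\sup_k 2^{ks}\|\Delta_k r\|_{L^2}$. The logarithmic factor $(j+2)$ is a \emph{commutator} phenomenon and does not apply to a plain product: multiplication by a function that is merely bounded (or in $B^0_{\infty,\infty}$) destroys positive regularity polynomially, not logarithmically --- in Bony's decomposition the paraproduct $T_r(\div u)$ lands only in $B^0_{2,\infty}$ when $s>0$ (take $r$ smooth and $\div u$ a rough bounded function), and a dual failure occurs for $s<0$ in the remainder. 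The repair is immediate and is exactly what the paper does: use the full hypothesis $\div u\in H^1\cap L^\infty$, which you anyway carry inside $V(\tau)$, so that for $-1<s'<1$ the product maps loss-free: $T_{\div u}r\in B^{s'}_{2,\infty}$, $T_r(\div u)\in B^{s'}_{2,\infty}$ via the embedding $B^{s'}_{2,\infty}\hookrightarrow B^{s'-1}_{\infty,\infty}$ valid in $d=2$, and $R(r,\div u)\in B^{s'+1}_{1,2}\hookrightarrow H^{s'}$ --- note that the condition $s'+1>0$ here is where the lower bound $s>-1$ actually comes from (it is imposed by the zeroth-order term, not by the commutator range, as you suggest). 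A similar, milder imprecision affects your commutator bound: with only $\|\nabla u\|_{B^0_{\infty,\infty}}$ on the right-hand side it is valid for $|s|<1$, and reaching $s$ up to $2$ requires the control $\nabla u\in L^1_T(B^1_{2,\infty})$, i.e.\ the $\|\nabla u\|_{H^1}$ norm that your $V$ already contains. With these two corrections your scheme closes and yields the stated estimate.
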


\begin{proof}
It is enough to apply Theorem 3.33 of \cite{B-C-D} with $p=p_1=2$, $\alpha=1/2$ and $d=2$. The only thing which needs some verification is the fact that Remark 3.35 applies
to $r\,\div u$, which has to be treated as a forcing term.

Thanks to the \textsl{a priori} bound $r\in B^{s'}_{2,\infty}$ for all $s'<s$ and to the property $\div u\in H^1$, the product rules of Proposition \ref{p:op} immediately imply
that the product $r\,\div u$ belongs to $H^{s'}$ (since we are in dimension $d=2$), which is included in $B^{s'}_{2,\infty}$. Therefore
$$
\left\|\Delta_j\big(r\,\div u\big)(t)\right\|_{L^p}\,\leq\,C\,2^{-js'}\,\left\|r(t)\right\|_{B^{s'}_{2,\infty}}\,\left\|\div u(t)\right\|_{H^1\cap L^\infty}
$$
for all $j\geq -1$, for all $s'\,\in\,]s-\de,s[$ and all $t\in[0,T]$, where $\left\|\div u(t)\right\|_{H^1\cap L^\infty}$ is integrable over $[0,T]$.
The last inequality completes the proof of the proof of the lemma.
\end{proof}

From the previous statement, we can derive additional regularity properties for the density variations $r_\veps$.
\begin{coroll} \label{c:reg-r}
Under hypotheses (i) and (ii) of Theorem \ref{t:2D-full}, one has
$$
\big(r_\veps\big)_\veps\,\subset\,\mc C_{\rm loc}\big(\R_+;H^{\mf b'}\big)\,\cap\,\mc C^{0,1/2}_{\rm loc}\big(\R_+;H^{\mf b'-1}\big)
$$
for all $0\leq\mf b'<\mf b$. In particular, $\big(r_\veps\big)_\veps$ is compact in the space $\mc C\big([0,T];L^2_{\rm loc}\big)$ for all $T>0$ fixed.
\end{coroll}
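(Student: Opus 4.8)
The plan is to read $r_\veps=\rho_\veps-1$ off the continuity equation as the solution of a transport problem and then to invoke the loss estimate of Lemma \ref{l:transport}. Since $\rho_\veps=1+r_\veps$, the mass equation in \eqref{eq:sing-NSC_2D} becomes
\begin{equation*}
\d_t r_\veps\,+\,u_\veps\cdot\nabla r_\veps\,+\,r_\veps\,\div u_\veps\,=\,g_\veps\,,\qquad\qquad g_\veps\,:=\,-\,\div u_\veps\,,
\end{equation*}
with initial datum $r_{0,\veps}$. I would apply Lemma \ref{l:transport} to this equation with the choice $s=\mf b$ (admissible, as $0<\mf b\leq1<2$), treating $r_\veps\,\div u_\veps$ as part of the forcing exactly as in the proof of that lemma. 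The whole point is that every quantity entering the estimate can be bounded \emph{uniformly in} $\veps$.

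First I would check the hypotheses uniformly. By assumption (i) together with the embedding $H^{\mf b}\hra B^{\mf b}_{2,\infty}$ (Proposition \ref{p:embed}), the family $(r_{0,\veps})_\veps$ is bounded in $B^{\mf b}_{2,\infty}$; by (ii) one has $u_\veps\in L^2_T(H^2)\hra L^1_T(H^2)$ on every interval $[0,T]$, and by (iv) $\div u_\veps\in L^1_T(L^\infty)$ (needed since $H^1\not\hra L^\infty$ in dimension two), so that $\int_0^T\big(\|\nabla u_\veps\|_{H^1}+\|\div u_\veps\|_{L^\infty}\big)\,dt$ is bounded uniformly in $\veps$. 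Finally, as $\mf b\leq1$ we have $H^1\hra H^{\mf b}\hra B^{\mf b}_{2,\infty}$, whence by Minkowski's inequality $\|g_\veps\|_{\wtilde L^1_T(B^{\mf b}_{2,\infty})}\leq\|g_\veps\|_{L^1_T(B^{\mf b}_{2,\infty})}\leq C\,\|\div u_\veps\|_{L^2_T(H^1)}$, again uniformly bounded by (ii). Lemma \ref{l:transport} then yields, for every $\de>0$, a uniform-in-$\veps$ bound for $(r_\veps)_\veps$ in $\wtilde L^\infty_T(B^{\mf b-\de}_{2,\infty})$, together with the continuity $r_\veps\in\mc C([0,T];B^{s'}_{2,\infty})$ for all $s'<\mf b$. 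The embedding $B^{s'}_{2,\infty}\hra H^{\mf b'}$, valid for $\mf b'<s'$ (Proposition \ref{p:embed}), then gives the first claimed space $r_\veps\in\mc C_{\rm loc}(\R_+;H^{\mf b'})$ for all $0\leq\mf b'<\mf b$, with uniform bounds.

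For the time regularity I would instead use the conservative form $\d_t r_\veps=-\div V_\veps$, with $V_\veps=\rho_\veps u_\veps=u_\veps+r_\veps u_\veps$. By (ii) the field $u_\veps$ is bounded in $L^2_T(H^2)$, while the previous step bounds $r_\veps$ in $L^\infty_T(H^{\mf b'})$; Corollary \ref{c:product-2}(iii), applied with $\eta=\mf b'\in\,]{-2},2[\,$, then controls $r_\veps\,u_\veps$ in $L^2_T(H^{\mf b'})$, so that $V_\veps$ is bounded in $L^2_T(H^{\mf b'})$ and $\d_t r_\veps$ in $L^2_T(H^{\mf b'-1})$, uniformly in $\veps$. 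The one–dimensional embedding $W^{1,2}(0,T)\hra\mc C^{0,1/2}([0,T])$ in the time variable furnishes the second claimed space $\mc C^{0,1/2}_{\rm loc}(\R_+;H^{\mf b'-1})$. The compactness then follows by interpolation and Ascoli-Arzel\`a: for $v:=r_\veps(t)-r_\veps(t')$ one has $\|v\|_{L^2}\leq\|v\|_{H^{\mf b'-1}}^{\mf b'}\,\|v\|_{H^{\mf b'}}^{1-\mf b'}\leq C\,|t-t'|^{\mf b'/2}$, i.e.\ uniform equicontinuity in time with values in $L^2_{\rm loc}$; since at each fixed $t$ the family $(r_\veps(t))_\veps$ is bounded in $H^{\mf b'}$ with $\mf b'>0$, it is relatively compact in $L^2_{\rm loc}$ by Rellich's theorem, and Ascoli-Arzel\`a (with a diagonal extraction over an exhaustion by compacts) yields relative compactness in $\mc C([0,T];L^2_{\rm loc})$.

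The genuinely delicate point is the application of Lemma \ref{l:transport}: in dimension two the transport velocity $u_\veps$ lies only in $H^2(\R^2)$, which is the borderline \emph{almost-Lipschitz} (not Lipschitz) regularity, so the full $H^{\mf b}$-smoothness cannot be propagated and one must accept the arbitrarily small loss $\de>0$, whence the strict inequality $\mf b'<\mf b$. The real work is therefore to ensure that the exponential constant produced by the loss estimate stays bounded uniformly in $\veps$; this is precisely what the $\veps$-uniform controls of $u_\veps$ in $L^2_T(H^2)$ and of $\div u_\veps$ in $L^1_T(L^\infty)$, furnished by hypotheses (ii) and (iv), guarantee. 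Everything else reduces to the product estimates of Corollary \ref{c:product-2} and to routine interpolation and Ascoli-Arzel\`a bookkeeping.
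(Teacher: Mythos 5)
Your proof is correct and follows essentially the same route as the paper: the transport form of the mass equation combined with Lemma \ref{l:transport} (where hypothesis (iv) is indeed needed, exactly as in the paper's own proof, since $H^1(\R^2)\not\hookrightarrow L^\infty$) for the uniform $H^{\mf b'}$ bounds, the conservative form $\d_t r_\veps\,=\,-\div V_\veps$ with Corollary \ref{c:product-2}(iii) for the $\mc C^{0,1/2}_T(H^{\mf b'-1})$ regularity, and interpolation plus Ascoli--Arzel\`a for the compactness in $\mc C\big([0,T];L^2_{\rm loc}\big)$. The only (harmless) slip is the initial datum, which is $\big(r_\veps\big)_{|t=0}\,=\,\veps\,r_{0,\veps}$ rather than $r_{0,\veps}$; since $\veps\leq1$, this only improves the uniform bound.
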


\begin{proof}
Fix $\veps\in\,]0,1]$. By definition, $r_\veps\,=\,\rho_\veps-1$ verifies the continuity equation
$$
\d_tr_\veps\,+\,u_\veps\cdot\nabla r_\veps\,+\,r_\veps\,\div u_\veps\,=\,-\,\div u_\veps\,,
$$
related to the initial datum $\big(r_\veps\big)_{|t=0}\,=\,\veps\,r_{0,\veps}$. By hypothesis (i), (ii) and (iv) of Theorem \ref{t:2D-full} we deduce respectively that
$r_{0,\veps}\in H^{\mf b}$, with $0<\mf b\leq 1$, and $\div u_\veps\in L^2_T(H^1)$ for all $T\geq0$. Hence, a straightforward application of Lemma \ref{l:transport} implies
that $r_\veps\in \mc C\big([0,T];H^{\mf b'}\big)$ for all $0\leq\mf b'<\mf b$. Moreover, the estimate given in the same lemma above yields that the whole sequence 
$\big(r_\veps\big)_\veps$ is uniformly bounded in the previous space.

Next, let us write
$$
\d_tr_\veps\,=\,-\,\div\big(r_\veps\, u_\veps\big)\,-\,\div u_\veps\,,
$$
where $\big(\div u_\veps\big)_\veps$ is uniformly bouned in $L^2_T(H^1)$ for all $T>0$. In addition, by the embedding $H^2\hookrightarrow L^\infty$, using the previous
uniform bounds for $\big(r_\veps\big)_\veps$, assumption (ii) of Theorem \ref{t:2D-full} and the product rules (iii) of Corollary \ref{c:product-2}, one gathers that
$\big(r_\veps\,u_\veps\big)_\veps$ is uniformly bounded in $L^2_T(H^{\mf b'})$ for all $\mf b'<\mf b$. From those properties we derive that
$\big(\d_tr_\veps\big)_\veps\,\subset\,L^2_T(H^{\mf b'-1})$, from which the uniform embedding $\big(r_\veps\big)_\veps\,\subset\,\mc C^{0,1/2}_T(H^{\mf b'-1})$ easily follows.

Ascoli-Arzel\`a theorem and an interpolation with the previous uniform bounds immediately give also the compactness property.
\end{proof}

We are now ready to derive better space regularity for the functions $\sigma_\veps$.
\begin{prop} \label{p:sigma_e}
Let assumptions (i) and (ii) of Theorem \ref{t:2D-full} hold. Then, for all $0<\mf b'<\mf b$, one has
$$
\big(\s_\veps\big)_\veps\,\subset\,L^\infty_{\rm loc}\big(\R_+;H^{\mf b'-2}(\Omega)\big)\,.
$$
In particular, $\big(\s_\veps\,u_\veps\big)_\veps$ is uniformly bounded in $L^2_{\rm loc}\big(\R_+;H^{\mf b'-2}(\Omega)\big)$.

In addition, under assumption (iii) of Theorem \ref{t:2D-full}, the sequence $\big(\s_\veps\big)_\veps$ is compact in the space $L^\infty_T(H^{\mf b'-2}_{\rm loc})$, for all times $T>0$ and all
indices $0<\mf b'<\mf b$. In particular, one gathers the weak convergence
$$
\sigma_\veps\,u_\veps\,\rightharpoonup\,\s\,u\quad \mbox{ in }\quad L^2_{\rm loc}\big(\R_+;H_{\rm loc}^{\mf b'-2}\big)\,.
$$
\end{prop}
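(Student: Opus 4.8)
The plan is to read off the regularity of $\s_\veps$ from the identity \eqref{eq:eta-s_2D}, whose crucial feature is that the curl annihilates the singular gradient terms: indeed $\curl f_\veps\,=\,\mu\,\Delta\omega_\veps\,-\,\curl\div\big(\rho_\veps u_\veps\otimes u_\veps\big)$, the pressure contribution $\nabla\Pi(\rho_\veps,1)$ disappearing since it is a gradient. Integrating \eqref{eq:eta-s_2D} in time gives
\begin{equation*}
\s_\veps(t)\,=\,\eta_\veps(t)\,-\,\big(\eta_\veps-\s_\veps\big)_{|t=0}\,-\,\int_0^t\curl f_\veps\,d\tau\,,
\end{equation*}
so it suffices to bound the three terms on the right in $L^\infty_{\rm loc}(\R_+;H^{\mf b'-2})$. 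First I would invoke Corollary \ref{c:reg-r} (which needs only (i)--(ii)) to write $V_\veps=\rho_\veps u_\veps=u_\veps+r_\veps u_\veps$ and to estimate $\eta_\veps=\curl u_\veps+\curl(r_\veps u_\veps)$: the first piece lies in $L^\infty_{\rm loc}(L^2)$ by (ii), while $r_\veps\in\mc C_{\rm loc}(H^{\mf b'})$ and $u_\veps\in H^1$ give $r_\veps u_\veps\in H^{\mf b'-\de}$ by Corollary \ref{c:product-2}, hence $\eta_\veps\in L^\infty_{\rm loc}(H^{\mf b'-1-\de})\hookrightarrow L^\infty_{\rm loc}(H^{\mf b'-2})$. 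The initial term equals $\curl u_{0,\veps}+\veps\,\curl(r_{0,\veps}u_{0,\veps})-r_{0,\veps}$, uniformly bounded in $H^{-1}+H^{\mf b}\hookrightarrow H^{\mf b'-2}$ by (i) and the standing hypotheses on the data. For the forcing integral, $\Delta\omega_\veps\in L^2_{\rm loc}(H^{-1})$ by (ii), and decomposing $\rho_\veps u_\veps\otimes u_\veps=u_\veps\otimes u_\veps+r_\veps\,(u_\veps\otimes u_\veps)$ and applying Corollary \ref{c:product-2} once more yields $\rho_\veps u_\veps\otimes u_\veps\in L^\infty_{\rm loc}(H^{\mf b'-\de})$, so $\curl\div(\rho_\veps u_\veps\otimes u_\veps)\in L^\infty_{\rm loc}(H^{\mf b'-\de-2})$; time integration keeps this in $L^\infty_{\rm loc}(H^{\mf b'-\de-2})$. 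Since $\mf b'<\mf b$ is arbitrary the $\de$'s are absorbed by relabelling, proving the first bound; the bound on $\big(\s_\veps u_\veps\big)_\veps$ then follows from point (iii) of Corollary \ref{c:product-2}, i.e. the continuity $H^{\mf b'-2}\times H^2\to H^{\mf b'-2}$, together with $u_\veps\in L^2_{\rm loc}(H^2)$.

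For the compactness statement I would fix $\mf b'<\mf b''<\mf b$ and combine the uniform bound in $L^\infty_{\rm loc}(H^{\mf b''-2})$ with uniform time--equicontinuity of $\s_\veps$, exactly in the spirit of Corollary \ref{c:reg-r}. The equicontinuity again comes from the same decomposition: $(\eta_\veps-\s_\veps)$ is uniformly $\mc C^{0,1/2}$ in time with values in a negative Sobolev space, since $\d_t(\eta_\veps-\s_\veps)=\curl f_\veps$ is uniformly bounded in $L^2_{\rm loc}(H^{\mf b''-2-\de})$; for $\eta_\veps$ one uses that $\curl u_\veps\in\mc C^{0,\mf a}_{\rm loc}(H^{-1})$ by assumption (iii), and that $r_\veps u_\veps$ is uniformly time--H\"older valued in $H^{\mf b''-1-\de}$, upon splitting the increment as $[r_\veps(t)-r_\veps(t')]\,u_\veps(t)+r_\veps(t')\,[u_\veps(t)-u_\veps(t')]$ and feeding the time regularity of $r_\veps$ from Corollary \ref{c:reg-r} and (iii) into the product rules. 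Hence $\s_\veps$ is uniformly time--equicontinuous with values in some $H^{-N}_{\rm loc}$; interpolating this with the uniform bound in $H^{\mf b''-2}$ (as in \eqref{cv:s_e}) upgrades the equicontinuity to values in $H^{\mf b'-2}_{\rm loc}$, and since $H^{\mf b''-2}\hookrightarrow\hookrightarrow H^{\mf b'-2}_{\rm loc}$ by Rellich, Ascoli--Arzel\`a gives relative compactness of $\big(\s_\veps\big)_\veps$ in $\mc C\big([0,T];H^{\mf b'-2}_{\rm loc}\big)\hookrightarrow L^\infty_T(H^{\mf b'-2}_{\rm loc})$.

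Finally, the weak convergence of the product is a standard strong--weak argument: along the subsequence given by compactness, $\s_\veps\to\s$ strongly in $L^\infty_T(H^{\mf b'-2}_{\rm loc})$ (the limit being the one identified in \eqref{cv:s_2D}), while $u_\veps\rightharpoonup u$ weakly in $L^2_{\rm loc}(H^2)$; testing $\s_\veps u_\veps$ against $\psi\in\mc D$ and transferring $\s_\veps\psi$, which converges strongly in $L^2_T(H^{-2})$, onto $u_\veps$ allows one to pass to the limit and identify $\s u$, using once more the continuity $H^{\mf b'-2}\times H^2\to H^{\mf b'-2}$.

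The main obstacle is the time compactness. One cannot estimate $\d_t\s_\veps$ directly, since $\s_\veps=r_\veps/\veps$ carries a diverging prefactor; the whole argument hinges on replacing $\s_\veps$ by $\eta_\veps-(\eta_\veps-\s_\veps)$, confining the genuinely singular behaviour to the combination $\eta_\veps-\s_\veps$ whose time derivative is \emph{uniformly bounded} thanks to the cancellation of the gradient terms under $\curl$, while the remaining piece $\eta_\veps=\curl(\rho_\veps u_\veps)$ is controlled only through the conditional hypotheses (ii)--(iii) by careful bookkeeping of Sobolev indices under the product and under the two derivatives lost to $\curl\div$.
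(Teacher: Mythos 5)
Your proof is correct and follows the paper's own strategy almost step by step: the same time-integration of \eqref{eq:eta-s_2D}, the same three-way splitting into $\eta_\veps$, the initial terms and $\int_0^t\curl f_\veps\,d\tau$, the same product rules from Corollary \ref{c:product-2} for the uniform $L^\infty_T(H^{\mf b'-2})$ bound, the same decomposition $\s_\veps\,=\,\eta_\veps\,-\,\big(\eta_\veps-\s_\veps\big)$ for compactness, and the same strong--weak argument for $\s_\veps\,u_\veps\rightharpoonup\s\,u$. The one genuine divergence is how you obtain compactness of $\eta_\veps$: the paper invokes Proposition \ref{p:s-uniform} to write $\eta_\veps\,=\,\omega_\veps\,+\,\veps^\k\,\curl\big(\veps^{-\k}r_\veps u_\veps\big)$, so that the $r_\veps u_\veps$ contribution is uniformly bounded and hence strongly $O(\veps^\k)$-small, leaving only the compactness of $\omega_\veps$ (assumption (iii) plus Ascoli--Arzel\`a); you instead prove time-equicontinuity of $\curl(r_\veps u_\veps)$ directly, via the increment splitting $[r_\veps(t)-r_\veps(t')]\,u_\veps(t)+r_\veps(t')\,[u_\veps(t)-u_\veps(t')]$. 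Your route works and is self-contained, but the second piece needs one extra word: assumption (iii) controls the increment $u_\veps(t)-u_\veps(t')$ only in $L^2$, and the product $H^{\mf b'}\times L^2$ is not among the items of Corollary \ref{c:product-2}; one should first interpolate the $L^2$ H\"older modulus with the uniform $H^1$ bound from (ii), placing the increment in $H^\theta$ for $\theta\in\,]0,1[\,$ with a reduced H\"older exponent, after which the product rules apply. The paper's appeal to Proposition \ref{p:s-uniform} bypasses exactly this bookkeeping, at the price of invoking the $\veps^\k$-smallness machinery; your variant avoids that proposition but is marginally heavier on Sobolev-index accounting. A similar cosmetic remark applies to your bound $\rho_\veps u_\veps\otimes u_\veps\in L^\infty_{\rm loc}(H^{\mf b'-\de})$, which rests on the product $H^{\mf b'}\times H^{1-\de}$, again not literally listed in Corollary \ref{c:product-2} (the paper instead uses item (iii) to get $u_\veps\otimes u_\veps\in L^2_T(H^1)$ and works with $L^2$ in time, which suffices after time integration); this is harmless, as the estimate follows from the same Bony decomposition.
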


\begin{proof}
Let us consider equation \eqref{eq:eta-s_2D}: an integration in time yields, for almost every $t\geq0$ and for all $\veps\in\,]0,1]$, the relation
$$
\sigma_\veps(t)\,=\,\eta_\veps\,-\,\curl\big(\rho_{0,\veps}\,u_{0,\veps}\big)\,+\,r_{0,\veps}\,-\,\int^t_0\curl f_\veps(\tau)\,d\tau\,.
$$
Recall that $f_\veps$ has been defined in Subsection \ref{ss:further}.

By assumption, $\big(r_{0,\veps}\big)_\veps\,\subset\, H^{\mf b}$, for some $0<\mf b\leq1$, while the family of $\curl\big(\rho_{0,\veps}\,u_{0,\veps}\big)$'s is uniformly bounded
in $H^{-1}$. In addidion, in view of the uniform bounds $\big(r_\veps\big)_\veps\subset L^\infty_T(H^{\mf b'})$ for all $0\leq \mf b'<\mf b$ and $\big(u_\veps\big)_\veps\subset L^\infty_T(H^1)$,
item (ii) of Corollary \ref{c:product-2} implies that $\big(\eta_\veps\big)_\veps\,\subset\,L^\infty_T(H^{\mf b'-1})$ for all $0\leq \mf b'<\mf b$
and all $T>0$.

Next, we remark that the $\curl$ operator kills the gradient of the pressure appearing in the definition of $f_\veps$. Then we get
\begin{align*}
\curl f_\veps\, &=\,\mu\,\Delta\omega_\veps\,-\,\curl\div\big(\rho_\veps\, u_\veps\otimes u_\veps\big) \\
&=\,\mu\,\Delta\omega_\veps\,-\,\curl\div\big(u_\veps\otimes u_\veps\big)\,-\,\curl\div\big(r_\veps\, u_\veps\otimes u_\veps\big)\,,
\end{align*}
where $\omega_\veps\,=\,\curl u_\veps$ as usual. Now, $\big(\Delta\omega_\veps\big)_\veps\subset L^2_T(H^{-1})$ in view of assumption (ii) of Theorem \ref{t:2D-full}; moreover, from item (iii)
of Corollary \ref{c:product-2} we infer that $\big(u_\veps\otimes u_\veps\big)_\veps\subset L^2_T(H^1)$. Hence, on the one hand
$\Big(\curl\div\big(u_\veps\otimes u_\veps\big)\Big)_\veps$ is uniformly bounded in $L^2_T(H^{-1})$; on the other hand, using also item (ii) of Corollary \ref{c:product-2}, we get that
$\Big(\curl\div\big(r_\veps\,u_\veps\otimes u_\veps\big)\Big)_\veps$ is uniformly bounded in $L^2_T(H^{\mf b'-2})$ for all $0\leq\mf b'<\mf b$.

Putting all those properties together, we finally deduce that $\big(\s_\veps\big)_\veps$ is uniformly bounded in $L^\infty_T(H^{\mf b'-2})$ for all $T>0$ and all $0\leq \mf b'<\mf b$.
By item (iii) of Corollary \ref{c:product-2} one also gathers the uniform boundedness of $\big(\s_\veps\,u_\veps\big)_\veps$ in $L^2_T(H^{\mf b'-2})$.

Next, we remark that, from \eqref{eq:eta-s_2D} and the previous analysis of $\curl f_\veps$, we infer that the sequence $\big(\eta_\veps-\s_\veps\big)_\veps$ is compact in e.g. the space
$\mc C\big([0,T];H^{\mf b'-2}_{\rm loc}\big)$, for all $\mf b'<\mf b$. Now, using Proposition \ref{p:s-uniform} and arguing exactly as in its proof, we can decompose
$$
\eta_\veps\,=\,\omega_\veps\,+\,\veps^\kappa\,\curl\left(\veps^{-\k}\,r_\veps\,u_\veps\right)\,,
$$
where, thanks to the fact that $\big(u_\veps\big)_\veps\subset L^\infty_T(H^1)$, we have that
$\Big(\curl\left(\veps^{-\k}\,r_\veps\,u_\veps\right)\Big)$ is uniformly bounded in $L^\infty_T(H^{-\wtilde s-\de})$ for all $\de>0$ arbitrarily small, with $\wtilde s\in\,]0,1[\,$ fixed.
Finally, in view of assumption (iii) of Theorem \ref{t:2D-full}, we get that $\big(\omega_\veps\big)_\veps$ is compact in $\mc C_T(H^{-1-\de}_{\rm loc})$ for all $\de>0$ small.

All these properties together immediately imply the compactness of $\big(\s_\veps\big)_\veps$ in $L^\infty_T(H^{\mf b'-2}_{\rm loc})$.
Now, combining this latter strong convergence with the uniform bound $\big(u_\veps\big)_\veps\subset L^2_T(H^2)$ and the product rules stated in item (iii) of Corollary \ref{c:product-2},
we also deduce the convergence $\s_\veps\,u_\veps\,\rightharpoonup\,\s\,u$ in $L^2_T(H^{\mf b'-2}_{\rm loc})$.

The proof of the proposition is now completed.
\end{proof}

\subsubsection{The proof of the convergence}

We are now in the position of showing convergence in system \eqref{eq:sing-NSC_2D}, completing in this way the proof to Theorem \ref{t:2D-full}.

First of all, we rewrite the mass equation in the following form:
$$
\d_t\s_\veps\,+\,\div\big(\s_\veps\,u_\veps\big)\,+\,\veps^{\beta-1}\,\theta_\veps\,=\,0\,.
$$
At this point, it is easy to pass to the limit in the previous equation, in view of Proposition \ref{p:sigma_e} above: as claimed, we get the relation
$$
\d_t\s\,+\,\div\big(\s\,u\big)\,+\,\de_{1\beta}\,\theta\,=\,0\,.
$$

On the other hand, there is no more need of passing to the vorticity formulation when proving the weak convergence of the momentum equation.
Observe that the time derivative, the viscosity term and the convective term can be dealt with as in Subsection \ref{ss:conv_0}, while the gradient terms disappear, because the test function
$\psi\,=\,\nabla^\perp\vphi$ is divergence-free.

Finally, it remains us to pass to the limit in the Coriolis term, for which we can argue in the following way:
\begin{align*}
\frac{1}{\veps}\int^T_0\!\!\!\int_{\R^2}\rho_\veps\,u_\veps^\perp\cdot\psi\,&=\,\frac{1}{\veps}\int^T_0\!\!\!\int_{\R^2}u_\veps^\perp\cdot\psi\,+\,
\int^T_0\!\!\!\int_{\R^2}\s_\veps\,u_\veps^\perp\cdot\psi\,.
\end{align*}
The latter term in the right-hand side of the previous equality obviously converges, thanks to Proposition \ref{p:sigma_e}. As for the former term, instead, we can use the fact that
$\psi\,=\,\nabla^\perp\vphi$ to get
$$
\frac{1}{\veps}\int^T_0\!\!\!\int_{\R^2}u_\veps^\perp\cdot\psi\,=\,\frac{1}{\veps}\int^T_0\!\!\!\int_{\R^2}u_\veps\cdot\nabla\vphi\,=\,
-\,\veps^{\beta-1}\int^T_0\!\!\!\int_{\R^2}\theta_\veps\,\vphi\,\longrightarrow\,-\,\de_{1\beta}\int^T_0\!\!\!\int_{\R^2}\theta\,\vphi\,.
$$
Now, we use the fact that $\vphi\,=\,-(-\Delta)^{-1}\curl\psi$ to write (when $\beta=1$)
$$
-\int^T_0\!\!\!\int_{\R^2}\theta\,\vphi\,=\,\int^T_0\!\!\!\int_{\R^2}\theta\,(-\Delta)^{-1}\curl\psi\,=\,-\int^T_0\!\!\!\int_{\R^2}\nabla^\perp(-\Delta)^{-1}\theta\cdot\psi\,.
$$
The proof to Theorem \ref{t:2D-full} is hence completed.

{\small

}

\end{document}